\documentclass[11pt]{article}
\usepackage[utf8]{inputenc}
\usepackage[english]{babel}
\usepackage{epsfig,enumerate,amsmath,amsfonts,amsbsy,amssymb,amsthm,mathrsfs,lineno,ifpdf}
\usepackage{indentfirst,relsize,capt-of}
\usepackage[mathscr]{euscript}
\usepackage[numbers]{natbib}
\usepackage{setspace,graphicx}
\usepackage[normalem]{ulem}
\usepackage{latexsym}
\usepackage[usenames,dvipsnames]{pstricks}
\usepackage[color=blue!20,textsize=small,textwidth=2.2cm]{todonotes}

\usepackage{pst-grad} 
\usepackage{pst-plot} 
\usepackage{tikz,pgf} 
\usetikzlibrary{positioning,shapes,shadows,arrows} 
\usepackage[lined,ruled,linesnumbered]{algorithm2e}
\usepackage[margin=1.2in]{geometry}
\newcounter{dummy} \numberwithin{dummy}{section}
\newtheorem{theorem}[dummy]{Theorem}
\newtheorem{lemma}[dummy]{Lemma}

\newtheorem{coro}[dummy]{Corollary}
\newtheorem{obs}[dummy]{Observation}

\newtheorem{conj}[dummy]{Conjecture}

\newtheorem{claim}{Claim}

\usepackage{enumitem}
\usepackage{etoolbox}


\newenvironment{manualtheorem}[1]{%
  \manualtheoreminner
}{\endmanualtheoreminner}

\let\oldenumerate\enumerate
\renewcommand{\enumerate}{
	\oldenumerate
	\setlength{\itemsep}{1.5pt}
	\setlength{\parskip}{0pt}
	\setlength{\parsep}{0pt}
}
\numberwithin{equation}{section}

\usepackage{hyperref, url}
\usepackage{graphics}
\usepackage[most]{tcolorbox}
\hypersetup{
	colorlinks=true,
	linkcolor=blue,
	pdfpagemode=FullScreen,
}
\newtcolorbox{mytextbox}[1][]{%
	sharp corners,
	enhanced,
	colback=white,
	height=10cm,
	attach title to upper,
	#1
}

\setstretch{1.1}

\textwidth=6.6in \textheight=8.8in \evensidemargin=0in
\oddsidemargin=0in \topmargin=0in \topskip=0pt \baselineskip=12pt
\parskip=8pt
\parindent=1em
\voffset -.6 in

\graphicspath{{images/}{../images/}}

\begin{document}
	\title{Cops and robber on subclasses of $P_5$-free graphs}

\author{$^1$Uttam K. Gupta, $^2$Suchismita Mishra, $^1$Dinabandhu Pradhan\thanks{Corresponding author.}  \\ \\
		$^{1}$Department of Mathematics \& Computing\\ Indian Institute of Technology (ISM), Dhanbad\\ \\
		$^{2}$ Department of Mathematics, Universidad Andr\'es Bello, Chile\\
		\small \tt Email: ukumargpt@gmail.com; suchismitamishra6@gmail.com; dina@iitism.ac.in}
	
	\date{}
	\maketitle

\begin{abstract}
				
The game of cops and robber is a turn based vertex pursuit game played on a connected graph between a team of cops and a single robber. The cops and the robber move alternately along the edges of the graph. We say the team of cops win the game if a cop and the robber are at the same vertex of the graph. The minimum number of cops required to win in each component of a graph is called the cop number of the graph. Sivaraman [Discrete Math. 342(2019), pp. 2306-2307] conjectured that for every $t\geq 5$, the cop number of a connected $P_t$-free graph is at most $t-3$, where $P_t$ denotes a path on $t$~vertices. Turcotte [Discrete Math. 345 (2022), pp. 112660] showed that the cop number of any $2K_2$-free graph is at most $2$, which was earlier conjectured by Sivaraman and Testa. Note that if a connected graph is $2K_2$-free, then it is also $P_5$-free. Liu showed that the cop number of a connected ($P_t$, $H$)-free graph is at most $t-3$, where $H$ is a cycle of length at most $t$ or a claw. So the conjecture of Sivaraman is true for ($P_5$, $H$)-free graphs, where $H$ is a cycle of length at most $5$ or a claw. In this paper, we show that the cop number of a connected ($P_5,H$)-free graph is at most $2$, where $H\in \{C_4$, $C_5$, diamond, paw, $K_4$, $2K_1\cup K_2$, $K_3\cup K_1$, $P_3\cup P_1\}$. 
\end{abstract}
	\noindent
	{\small \textbf{Keywords:} Cops and Robber; cop number; forbidden induced subgraphs; $P_5$-free graphs.}
	
\section{Introduction}

All the graphs in this paper are finite, simple, and undirected. The complete graph, cycle, and path on $n$~vertices are denoted by $K_n,C_n$, and $P_n$, respectively. The \emph{disjoint union} of two vertex-disjoint graphs $G$ and $H$, denoted by $G \cup H$, is the graph with vertex set $V(G) \cup V (H)$ and edge set $E(G) \cup E(H)$. For a positive integer~$r$, $rG$ denotes the disjoint union of $r$~copies of $G$. The game of cops and robber was introduced by Quilliot \cite{quilliot} in~1978 and independently by Nowakowski and Winkler \cite{nowakowskivtovpursuit} in~1981. The game is played on a connected graph by a team of cops and a robber. In the first turn, all the cops are placed on the vertices of the graph (multiple cops can be placed on a single vertex). In the second turn, the robber chooses a vertex. Then the cops and the robber take their moves in alternative turns, starting with the cops.  A valid move for a cop is to stay at its current position or to move to an adjacent vertex. A valid move for the robber is similar to the cops. A \emph{round} of moves consists of  two consecutive turns in which the cops have the first turn and then the robber has its turn. We say that a cop {\it captures} the robber if both of them are on the same vertex of the graph. The cops {\it win} if after a finite number of rounds, one of the cops captures the robber. The robber {\it wins} if the cops cannot win in a finite number of rounds.
	The \emph{cop number} of a graph $G$, denoted by $cop(G)$, is defined as the minimum number of cops required such that the cops win in each component of $G$.
	
	The characterization of graphs with the cop number one was studied by Quilliot \cite{quilliot} and independently by Nowakowski and Winkler \cite{nowakowskivtovpursuit}. The study of graphs with higher cop number was initiated by Aigner and Fromme \cite{aigner}. Several characterizations of graphs with cop number $k$ have been studied by Clarke and MacGillivray \cite{charKCopWin}. Berarducci and Intrigila~\cite{berarduccioncopnumbers} gave an $O(n^{O(k)})$-algorithm to decide whether $cop(G)\leq k$ for a $n$-vertex graph $G$. Fomin et al.~\cite{fomin} showed that it is NP-hard to determine the cop number of a graph. Moreover, it is $W[2]$-hard to determine whether $cop(G)\leq k$, where $k$ is the parameter \cite{fomin}. Meyniel conjectured that the cop number of a connected graph on $n$~vertices is at most $O(\sqrt{n})$. The conjecture was made by Meyniel in a personal communication with Frankl in~$1985$ and is mentioned in~\cite{frankl}. Frankl showed that $cop(G) = o(n)$ for any connected $n$-vertex graph $G$. One can ask whether there exists $\epsilon >0$ such that the cop number of any connected $n$-vertex graph $G$ is $O(n^{1-\epsilon})$? This problem is also open. For more details, see the survey by Baird and Bonato \cite{baird}. The best known upper bound on the cop number of a graph on $n$~vertices is $n2^{-(1+o(1))\sqrt{\log_2 n}}$ (see \cite{lu,scott2011}). Lu and Peng \cite{lu} showed that if $G$ is a graph of diameter at most~$2$ or a bipartite graph of diameter at most~$3$, then $cop(G)\leq 2\sqrt{n}-1$; thus validating the Meyniel's conjecture for such graphs. Later, Wagner \cite{wagner} improved the result of Lu and Peng on the same class of graphs by showing that such graphs satisfy $cop(G)\leq \sqrt{2n}$. Lu and Peng used random arguments to prove the bound whereas Wagner's proof did not include randomness. 
	
	The cop number of a family of graphs $\mathcal{G}$ is the minimum integer $k$ such that $cop(G) \leq k$ for any graph $G$ in $\mathcal{G}$. If we cannot find such an integer $k$, then we say $\mathcal{G}$ is a family with unbounded cop number (or the cop number of $\mathcal{G}$ is not bounded). Aigner and Fromme \cite{aigner} proved that the cop number of a graph with girth $\ell$ is at least its minimum degree, when $\ell>4$. Thus the cop number of the class of all graphs is not bouned. Andreae \cite{tandreaepursuitofgame} showed the cop number of the class of all $d$-regular graphs is not bounded. Aigner and Fromme \cite{aigner} proved that the cop number of the class of all planar graph is $3$. Moreover, dodecahedron is a planar graph whose cop number is $3$. Clarke~\cite{clarkeouterplanar} proved that the cop number of any outerplanar graph is at most~$2$. The cop number of the class of all $k$-chordal graphs is at most $k-1$ \cite{chordal}. 
	
	Let $\mathcal{F}$ be a family of graphs. A graph $G$ is said to be $\mathcal{F}$-free if no $H \in \mathcal{F}$ is isomorphic to an induced subgraph of the graph $G$. When $\mathcal{F}=\{H\}$ (resp. $\{H_1,H_2, \ldots, H_k\}$, $k\geq 2$), we use $H$-free (resp. $(H_1,H_2,\ldots, H_k)$-free) graphs to denote the  $\mathcal{F}$-free graphs. Joret et al.~\cite{gjoretforbiddengraphs} proved that the cop number of the class of $H$-free graphs is bounded if and only if every component of $H$ is a path. Furthermore, suppose that $\mathcal{G}$ be a family of graphs such that the diameter of any $G \in \mathcal{G}$ is at most $k$ for some natural number $k$. Masjoodi and Stacho~\cite{mmasjoodyforbiddensubgraphs} showed that the class of all $\mathcal{G}$-free graphs has bounded cop number if and only if $\mathcal{G}$ contains a path or $\mathcal{G}$ contains a generalized claw and a generalized net. It is known that the cop number of a connected $P_t$-free graphs is at most $t-2$ \cite{gjoretforbiddengraphs}. 
	Sivaraman \cite{vaidyapplicationtogpath} gave a shorter proof for the same by using Gy\'arf\'as path argument, where he conjectured the following.
	
	\begin{conj}[\cite{vaidyapplicationtogpath}]\label{conj}
		The cop number of a connected $P_t$-free graph is at most $t-3$ for $t\geq 5$.
	\end{conj}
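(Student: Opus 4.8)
The natural engine for Conjecture~\ref{conj} is the \emph{isometric path guarding} lemma of Aigner and Fromme \cite{aigner}: a single cop can guard any geodesic (shortest path) $P$ of $G$, in the sense that after finitely many rounds the cop shadows the projection of the robber onto $P$, captures the robber the instant it touches $P$, and thereafter confines it to one connected component of $G$ lying away from $P$. The plan is to treat this as a primitive and to spend the $t-3$ cops on a carefully grown family of geodesics. Two structural facts about a connected $P_t$-free graph $G$ drive the count: first, since a shortest path is induced, $G$ has diameter at most $t-2$; second, any induced path of $G$ has fewer than $t$ vertices.

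The core of the plan is a Gy\'arf\'as-style path-building confinement, which is precisely how Sivaraman \cite{vaidyapplicationtogpath} reproves the known bound $cop(G)\le t-2$ of Joret et al. \cite{gjoretforbiddengraphs}. I would maintain an induced path $x_1x_2\cdots x_k$ together with one guarding cop per already-chosen geodesic, arranging that the robber is confined to a single connected component $R$ of the part of $G$ not yet dominated by the guarded closed neighborhoods, with some $x_k$ on the boundary of $R$. At each stage I pick $x_{k+1}\in R$ adjacent to $x_k$, take a geodesic pushing into $R$ from $x_{k+1}$, assign a fresh cop to guard it, and note that the robber is forced into a strictly smaller component. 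The invariant to preserve is that the anchors $x_1,\dots,x_k$ stay pairwise non-adjacent (an induced path); then $P_t$-freeness forces $k<t$, so the robber is trapped after boundedly many stages.

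To reach the conjectured $t-3$ rather than $t-2$, the saving of a single cop is the entire difficulty, and this is where I expect the argument to resist. The promising route is to make the first cop do double duty: choose the initial geodesic to realize the diameter, so that its two endpoints already account for two units of induced-path growth at the cost of one cop, and then show that the robber's final component, being dominated by the guarded structure and of very small diameter, is captured by one reallocated cop rather than by a dedicated new one. The delicate point is the bookkeeping: guaranteeing that the anchors remain pairwise non-adjacent across this merged first step (no chords are created between the diameter endpoints and later anchors), and that a dominated region of diameter one or two is genuinely cop-win for a single reused cop.

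The honest assessment is that this last ``$-1$'' is not known to go through for general $P_t$-free graphs, and not even for $t=5$, where the statement asks only for $cop(G)\le 2$ on all $P_5$-free graphs; this is exactly why the present paper retreats to subclasses. For small $t$ I would instead lean on strong structure, namely the Bacs\'o--Tuza theorem that every connected $P_5$-free graph has a dominating clique or a dominating induced $P_3$, to pin the robber inside a bounded-diameter dominated region and then hand it to two cops. Forbidding one additional induced subgraph $H$, as in the main results of this paper, should supply precisely the rigidity that the unrestricted $P_5$-free case lacks: a well-behaved dominating set, or a limit on how the robber's territory can branch, making the two-cop capture provably terminate.
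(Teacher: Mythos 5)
This statement is a \emph{conjecture}, not a theorem of the paper: the authors do not prove it, they only verify it on subclasses of $P_5$-free graphs obtained by forbidding one extra induced subgraph, and they state explicitly in the conclusion that Conjecture~\ref{conj} remains open even for $t=5$. So there is no proof in the paper to compare yours against, and your own closing assessment is the correct one: your proposal is not a proof. Its first two paragraphs reconstruct the known bound $cop(G)\le t-2$ (Joret, Kami\'nski, Theis~\cite{gjoretforbiddengraphs}, reproved by Sivaraman~\cite{vaidyapplicationtogpath} via the Gy\'arf\'as path argument; the Aigner--Fromme guarding lemma~\cite{aigner} is a valid alternative engine), but the entire content of the conjecture is the reduction from $t-2$ to $t-3$, and your sketch of that step (``make the first cop do double duty along a diameter-realizing geodesic'') is not an argument --- no mechanism is given for why the merged first step saves a cop while preserving the induced-path invariant, and you concede this.

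Beyond the admitted gap, the two finishing moves you suggest are themselves unsound, and it is worth seeing why. First, ``a dominated region of diameter one or two is genuinely cop-win for a single reused cop'' is false: small diameter does not bound the cop number. Graphs of diameter~$2$ can have cop number of order $\sqrt{n}$, which is exactly why the results of Lu--Peng~\cite{lu} and Wagner~\cite{wagner} on diameter-$2$ graphs only achieve the Meyniel-type bound $O(\sqrt{n})$ rather than a constant; the paper itself records that $2K_1\cup K_2$-free graphs, which have domination number at most~$3$ and diameter at most~$3$, can have cop number~$3$~\cite{jturcotte2k2freegraphs}. Second, the Bacs\'o--Tuza theorem (usually stated as: every connected $P_5$-free graph has a dominating clique or a dominating induced $C_5$) does not ``hand the robber to two cops.'' A dominating set of size~$3$ yields cop number at most~$3$ trivially; the whole difficulty --- for $P_5$-free graphs in general, for $2K_2$-free graphs in Turcotte's work, and for every subclass treated in this paper --- is getting from~$3$ down to~$2$, and a dominating clique or dominating $P_3$ gives no obvious way to do this, since a robber can perpetually rotate among the private neighborhoods of the dominating vertices while a single pursuing cop stays one step behind. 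This is precisely the wall at which the conjecture currently stands, and why the paper's actual contributions are long case analyses around specific induced subgraphs (paw, co-banner, butterfly, kite, etc.) rather than any domination shortcut.
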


Conjecture \ref{conj} was verified for a subclass of $P_t$-free graphs; the class of ($P_t, C_\ell$)-free graphs, where $t \geq 5$ and $3 \leq \ell \leq t$ \cite{MLiuforbiddesgrph}. 
	It is known that the cop number of a $P_4$-free graph is at most $2$ \cite{gjoretforbiddengraphs}. Note that if a graph is $2K_2$-free, then it is also $P_5$-free.
	Sivaraman and Testa \cite{vaidy2k2freegraphs} showed that the cop number of a connected $2K_2$-free graph $G$ is at most~$2$ if the diameter of $G$ is $3$ or it is $C_\ell$-free for some $\ell \in \{3,4,5\}$. In the same paper, they conjectured that the class of $2K_2$-free graphs has cop number~$2$. Later, this conjecture was proved by Turcotte~\cite{jturcotte2k2freegraphs}.

\subsection{Our results}	
Liu~\cite{MLiuforbiddesgrph} showed that the cop number of a connected ($P_t, H$)-free graph is at most $t-3$, where $t \geq 5$ and $H$ is a cycle of length at most~$t$ or a claw. First we give a short proof for these results for the case $t=5$ by using Gy\'arf\'as path argument (see Lemma~\ref{thmc4free} of Section~\ref{prelim}). Then by using that, we find the cop number of different subclasses of $P_5$-free graphs. In Section~\ref{sec:pawk4}, we first show that the cop number of a connected ($P_5$, paw)-free graph is at most $2$. So if a ($P_5, K_4$)-free graph is also paw-free, then its cop number is at most~$2$. We study the structure of a connected ($P_5, K_4$)-free graph, that has an induced paw, around an induced paw. By using the structural properties, we show that the cop number of a connected ($P_5, K_4$)-free graph is at most $2$ in Section~\ref{sec:pawk4}. We show that the cop number of every connected ($P_5,K_3 \cup K_1)$-free graph and every connected $P_3 \cup P_1$-free graph is at most $2$ in Section~\ref{sec:k3Uk1} and Section~\ref{sec:P3P1}, respectively. In Section~\ref{sec:diamondand2k1Uk2}, we show that the cop number of every connected ($P_5$, diamond)-free graph and every connected $(P_5,2K_1\cup K_2)$-free graph is at most $2$. We prove these results by showing the non-existence of any minimum counterexample. Again, $C_4$ and $C_5$ are $(P_4,P_3\cup P_1,2K_2)$-free and ($P_5,K_4$, diamond, paw, $C_4$, claw, $K_3 \cup K_1,2K_1 \cup K_2$)-free graphs, respectively with the cop number $2$. Therefore, the cop number of the class of ($P_5, H$)-free graphs is~$2$, where $H$ is a graph on $4$ vertices and has at least one edge.
	
\section{Notations, terminologies, and preliminary results}\label{prelim}
	
Let $G$ be a graph and $x$ be a vertex of $G$. The \emph{neighborhood} of $x$ in $G$, denoted by $N(x)$, is the set of all the neighbors of $x$ in $G$. The \emph{closed neighborhood} of $x$ in $G$, denoted by $N[x]$, is the set $\{x\}\cup N(x)$. For a set $S\in V(G)$, we define $N(S)=\{v\in V(G)\setminus S \mid N(v)\cap S\neq \emptyset\}$ and $N[S]=S\cup N(S)$. A set $D\subseteq V(G)$ is  called a \emph{dominating set} of $G$ if $N[D]=V(G)$. For two disjoint sets of vertices $S$ and $T$, $[S,T]$ denotes the set $\{xy\in  E(G) \mid  x \in S, y \in T\}$. We say that $[S,T]$ is \emph{complete} if every vertex in $S$ is adjacent to every vertex of $T$ in $G$. For a set of vertices $S$, $G[S]$ and $G-S$ denote the subgraphs induced by $S$ and $V(G)\setminus S$ in $G$, respectively. The length of a path is the number of edges in it. The distance of a vertex $u$ from a set $S$ is the least length of $u-v$ paths for every vertex $v \in S$. We use Cop~$1$ and Cop~$2$ as the names of the two cops throughout the paper. We refer to Figure~\ref{special} for some special graphs mentioned in this paper.
	
\begin{figure}[hbtp]
\centering
\includegraphics[scale=1]{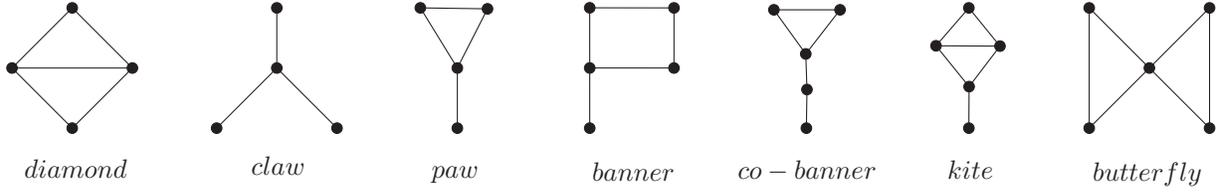}
\caption{Some special graphs}\label{special}
\end{figure}

Recall that in the game of cops and robber, a \emph{round} of moves consists of  two consecutive turns in which the cops have the first turn and then the robber has its turn. Moreover, a cop captures the robber if they are at the same vertex of the graph. This happens only if in the robber's turn, the robber stays or moves to a position that belongs to the closed neighborhood of the position of one of the cops. Equivalently, we can say that a cop at $x$ captures the robber at $y$ if after any round of moves of the cops and the robber, we have $y\in N[x]$.

Liu~\cite{MLiuforbiddesgrph} proved that the cop number of any connected $(P_t, C_\ell)$-free graph is at most $t-3$ for any natural number $t \geq 5$ and $\ell \leq t$. Moreover, it also has been shown that the cop number of a connected ($P_t$, claw)-free graph is at most $t-3$ for $t\geq 5$. To make the paper self-contained, we prove these results for $t=5$. 
\begin{lemma}[\cite{MLiuforbiddesgrph}]\label{thmc4free}
		Let $G$ be a connected $(P_5,H)$-free graph, where $H\in \{C_3,C_4,C_5,$ claw$\}$. Then $cop(G)\leq 2$.
	\end{lemma}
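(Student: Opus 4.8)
The plan is to combine two classical ingredients: that a connected $P_5$-free graph admits a small dominating structure (obtained via the Gyárfás path argument, as in Sivaraman's short proof), and the Aigner--Fromme principle that a single cop can guard a shortest path.

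First I would record that, since $G$ is $P_5$-free and connected, every shortest path is induced and so has at most $4$ vertices; in particular $\mathrm{diam}(G)\le 3$. Running the Gyárfás path argument from a root vertex, I would extract a dominating set $D$ inducing either a clique or an induced $P_3$, say $x\!-\!y\!-\!z$ (the Bacsó--Tuza dichotomy, which the Gyárfás path argument reproves). The two regimes, \emph{dominating clique} and \emph{dominating $P_3$}, are then handled separately. In the clique regime, when $H=C_3$ the clique has at most two vertices, so $D$ is a dominating edge (or a single vertex); stationing the two cops on its endpoints wins at once, since every robber position lies in the closed neighborhood of a cop and is captured on the next move.

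Second, I would put Cop~$1$ in charge of guarding in the $P_3$ regime. As $x\not\sim z$, the path $x\!-\!y\!-\!z$ is a geodesic, so Cop~$1$ can guard it; after the guarding phase the robber can never step onto $\{x,y,z\}$ and is therefore confined to a single connected component $U$ of $G-\{x,y,z\}$. Because $D$ dominates $G$, every vertex of $U$ is adjacent to $x$, $y$, or $z$. The aim is to show Cop~$2$ can clear $U$, most cleanly by proving $U\subseteq N[w]$ for a single $w\in\{x,y,z\}$ (then Cop~$2$ sits on $w$ and captures) or that $U$ is a clique (hence cop-win).

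The structural heart is controlling $U$ through $P_5$-freeness together with the excluded $H$. The basic obstruction is that if $a\in U$ is adjacent to $x$ but not to $y,z$ and $c\in U$ is adjacent to $z$ but not to $x,y$, then $a\not\sim c$ would produce the induced path $a\!-\!x\!-\!y\!-\!z\!-\!c$; so these "private" neighbors of $x$ and of $z$ are completely joined. Feeding in the hypothesis on $H$ -- no induced claw, or no short induced cycle $C_3,C_4,C_5$ -- should force these joined classes to collapse, leaving $U$ single-dominated or complete. I expect the main obstacle to be exactly this last step: ruling out a robber that oscillates between the $x$-side and the $z$-side of $U$, and doing so uniformly across all four choices of $H$. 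The $C_3$ case is immediate, but the cycle-free and claw-free cases need the case analysis around the dominating $P_3$ sketched above, and the genuinely delicate point is verifying that in each case the second cop actually \emph{captures} the robber rather than merely keeping it threatened.
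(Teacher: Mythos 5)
Your route (Bacs\'o--Tuza dominating clique/dominating induced $P_3$, plus Aigner--Fromme guarding of a geodesic) is genuinely different from the paper's, which never extracts a dominating set: the paper plays both cops directly, shadows the robber's component of $G-N[v_1]$, uses $P_5$-freeness to pin the robber at distance exactly~$2$ from the second cop, and then analyses the two extension classes of the resulting induced $P_4$ (vertices attaching at $\{v_1,x\}$, resp.\ $\{v_2,x\}$); forbidding $C_4$ or the claw empties one class, forbidding $C_5$ empties the other, and either emptiness ends the game, with a short extra endgame for $C_3$. However, your proposal as written has two genuine gaps. The first is the dominating-clique regime: you dispose of it only for $H=C_3$, where the clique is an edge. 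For $H\in\{C_4,C_5,\mathrm{claw}\}$ the dominating clique can be arbitrarily large, and two cops have no evident winning strategy from a large dominating clique. This step is not a formality: every connected $2K_2$-free graph has a dominating clique, yet cop number at most~$2$ for $2K_2$-free graphs was a conjecture of Sivaraman and Testa that required Turcotte's nontrivial proof. So ``dominating clique $\Rightarrow$ two cops win'' cannot be waved through, and nothing in your write-up addresses it.

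The second gap is the structural heart of the $P_3$ regime, which you explicitly defer. With Cop~$1$ permanently tied to guarding the geodesic $x\!-\!y\!-\!z$, Cop~$2$ alone must capture the robber inside its component $U$ of $G-\{x,y,z\}$, and your plan for this rests entirely on the claim that $U\subseteq N[w]$ for a single $w\in\{x,y,z\}$ or $U$ is a clique. You do not prove this for any of the four choices of $H$; the only fact you establish (private $x$-neighbours and private $z$-neighbours in $U$ are pairwise adjacent, else an induced $P_5$ arises) says nothing about vertices of $U$ dominated only by $y$, or vertices seeing two of $x,y,z$, and these are exactly the configurations that make the claim delicate (for instance, a private-$y$ vertex together with private-$x$ and private-$z$ vertices already forces further adjacencies via induced $P_5$'s of the form $b\!-\!y\!-\!x\!-\!a\!-\!c$ that your analysis never examines). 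Since you yourself flag this step as ``the main obstacle,'' the proposal is a plausible program rather than a proof: both the clique regime for three of the four graphs $H$ and the collapse of $U$ in the $P_3$ regime remain to be supplied.
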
	
	\begin{proof}
We prove the lemma by arguing that two cops capture the robber after a finite number of turns. Let $v_1$ be a vertex of $G$. In the first turn, we place both the cops at $v_1$. To avoid immediate capture, the robber must choose a vertex $y$ that is not a neighbor of $v_1$. Let $X$ be the component of the graph induced by $V(G) \setminus N[v_1]$ such that $y\in X$. Since $G$ is connected, we have $N(v_1)\cap N(X)\neq \emptyset$. In the next turn, Cop~$1$ stays at $v_1$ and Cop~$2$ moves to a vertex $v_2 \in N(v_1) \cap N(X)$. If the robber moves to a vertex of $N(v_1)$, then the cop at $v_1$ captures the robber. So to avoid capture, the robber should stay in $X$. Let the robber choose $x\in N[y]\cap X$ as its position. If $x$ is a neighbor of $v_2$, then the robber gets captured by Cop~$2$. So we may assume that $x$ is at distance at least $2$ from $v_2$. Since $v_2$ has a neighbor in $X$ and $X$ is a component of the graph induced by $V(G) \setminus N[v_1]$, there exists an induced path $P$ between $v_2$ and $x$ of length at least~$2$ such that $V(P)\setminus \{v_2\} \subseteq V(X)$. This implies that $V(P) \cup \{v_1\}$ induces a path of length at least~$3$. Since $G$ is $P_5$-free, the length of the path induced by $V(P)\cup \{v_1\}$ is $3$. This implies that $v_2$ is at distance $2$ from $x$. Let $v_3 \in V(P)\cap X$ be a common neighbor of $v_2$ and $x$. Note that $\{v_1,v_2,v_3,x\}$ induces a $P_4$ in $G$. Define $S := \{u \in V(G) \mid N(u) \cap \{v_1,v_2,v_3,x\} = \{v_1,x\}\}$ and $T := \{u \in V(G)\setminus \{v_3\} \mid N(u) \cap \{v_1,v_2,v_3,x\} = \{v_2,x\}\}$. 
		
We first prove that if $S=\emptyset$ or $T=\emptyset$, then $cop(G)\leq 2$. Suppose that $S=\emptyset$. Then in the next turn, Cop~$1$ and Cop~$2$ move to $v_2$ and $v_3$, respectively. To avoid immediate capture, the robber must move. Suppose that the robber moves to a vertex $r$. Since $S = \emptyset$ and $\{v_1,v_2,v_3,x,r\}$ does not induce a $P_5$, $r$ is adjacent to $v_2$ or $v_3$. Hence the robber gets captured by Cop~$1$ or Cop~$2$ implying that $cop(G)\leq 2$. Now suppose that $T=\emptyset$. In the next turn, Cop~$1$ stays at $v_1$ and Cop~$2$ moves to $v_3$, respectively. To avoid immediate capture, the robber must move to a vertex $r$. Since $T = \emptyset$ and $\{v_1,v_2,v_3,x,r\}$ does not induce a $P_5$, $r$ is adjacent to $v_1$ or $v_3$. Hence the robber gets captured by Cop~$1$ or Cop~$2$ implying that $cop(G)\leq 2$. Note that if $H\in \{C_4,$ claw$\}$, then $T=\emptyset$. Again note that if $H=C_5$, then $S=\emptyset$. So $cop(G)\leq 2$ if $H\in\{C_4,C_5,$ claw$\}$. 

Now assume that $S\neq \emptyset$ and $T\neq \emptyset$. Then it is clear that $H=C_3$. Let $s$ be a vertex of $S$. In the next turn, Cop~$1$ stays at $v_1$ and Cop~$2$ moves to $v_3$. To avoid immediate capture, the robber must move to a vertex $r$ that is not adjacent to $v_1$ and $v_3$. Note that $r\in T$; otherwise $\{v_1,v_2,v_3,x,r\}$ induces a $P_5$. Again, $r$ is not adjacent to $s$; otherwise $\{s,x,r\}$ induces a $C_3$. In the next turn, Cop~$1$ moves to $v_2$ and Cop~$2$ stays at $v_3$. To avoid immediate capture, the robber must move to a vertex $r'$ that is not adjacent to $v_2$ and $v_3$. Since $\{r',r,x\}$ does not induce a $C_3$, $r'$ is not adjacent to $x$. Again since $\{s,v_1,v_2,r,r'\}$ does not induce a $P_5$, $r'$ is adjacent to $s$ or $v_1$. If $r'$ is adjacent to $s$, then $\{r',s,x,v_3,v_2\}$ induces a $P_5$ which is a contradiction. So $r'$ is adjacent to $v_1$. Now $\{r',v_1,v_2,v_3,x\}$ induces a $P_5$, again a contradiction. So such a vertex $r'$ does not exist. Hence the robber cannot escape from $r$ and gets captured implying that $cop(G)\leq 2$.\end{proof}

Note that by Lemma~\ref{thmc4free}, every $(P_5,C_4)$-free graph has the cop number at most~$2$. A more careful observation on the proof of Lemma~\ref{thmc4free} leads to a result on a superclass of the class of $(P_5,C_4)$-free graphs. Note that in the proof of Lemma~\ref{thmc4free}, if $T\neq\emptyset$, then for any $t\in T$, $\{v_1,v_2,v_3,x,t\}$ induces a banner in $G$. So the following corollary holds.

\begin{coro}\label{p5banner}
Let $G$ be a connected $(P_5,$ banner$)$-free graph. Then $cop(G)\leq 2$.
\end{coro}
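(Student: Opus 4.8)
The plan is to obtain Corollary~\ref{p5banner} as a direct strengthening of Lemma~\ref{thmc4free}, by re-reading its proof and isolating exactly which forbidden subgraph forces the favorable case. I would run verbatim the cop strategy from the proof of Lemma~\ref{thmc4free}: place both cops on a vertex $v_1$, drive the robber into a component $X$ of $G-N[v_1]$, advance Cop~$2$ to a vertex $v_2\in N(v_1)\cap N(X)$, and use $P_5$-freeness to extract an induced $P_4$ on $\{v_1,v_2,v_3,x\}$, where $x$ is the robber's (forced) position in $X$ and $v_3\in X$ is a common neighbor of $v_2$ and $x$. This portion of the argument uses only that $G$ is connected and $P_5$-free, so it transfers unchanged to the present setting.

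The key step is to show that the set $T=\{u\in V(G)\setminus\{v_3\}\mid N(u)\cap\{v_1,v_2,v_3,x\}=\{v_2,x\}\}$ is empty. Suppose toward a contradiction that $t\in T$. Because $\{v_1,v_2,v_3,x\}$ induces a $P_4$ with edges $v_1v_2$, $v_2v_3$, $v_3x$, the pairs $v_1v_3$, $v_1x$, $v_2x$ are non-edges; and membership $t\in T$ yields $tv_2,tx\in E(G)$ together with $tv_1,tv_3\notin E(G)$. Hence the subgraph induced on $\{v_1,v_2,v_3,x,t\}$ consists of the $4$-cycle $v_2v_3xt$ with the pendant vertex $v_1$ attached at $v_2$, which is exactly a banner. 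Since $G$ is banner-free, no such $t$ exists, so $T=\emptyset$.

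Finally, I would invoke the $T=\emptyset$ branch already established inside the proof of Lemma~\ref{thmc4free}: with $T=\emptyset$, Cop~$1$ stays at $v_1$ while Cop~$2$ moves to $v_3$, and to avoid immediate capture the robber must move to some vertex $r$ nonadjacent to both $v_1$ and $v_3$. Then $P_5$-freeness together with $T=\emptyset$ prevents $\{v_1,v_2,v_3,x,r\}$ from inducing a $P_5$, forcing $r$ to be adjacent to $v_1$ or $v_3$; in either case the robber is captured. This gives $cop(G)\leq 2$.

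I do not anticipate any genuine obstacle. The only point requiring care is the verification that the five listed vertices induce \emph{precisely} a banner, with no extra edges among them, and this follows immediately from the $P_4$ adjacency relations together with the defining conditions for membership in $T$. The corollary is therefore a clean byproduct of recognizing the banner as the unique local obstruction to the already-resolved case $T=\emptyset$.
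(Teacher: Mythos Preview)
Your proposal is correct and is essentially the paper's own argument: the paper derives the corollary by noting that in the proof of Lemma~\ref{thmc4free} any $t\in T$ makes $\{v_1,v_2,v_3,x,t\}$ induce a banner, so banner-freeness forces $T=\emptyset$ and the already-handled $T=\emptyset$ branch finishes the proof. Your write-up reproduces this exactly, with the same cop moves and the same contradiction.
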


We now conclude this section by describing a partition around an induced paw of a graph to use that later. The partition is described as follows.
\begin{itemize}
%
%
%
 \item\underline{\textsc{partition around an induced paw}}: Let $G$ be a graph and $P$ induce a paw in $G$ with vertex set $P=\{v_1,v_2,v_3,v_4\}$ and edge set $\{v_1v_2,v_2v_3,v_3v_4,v_2v_4\}$. Define the following sets.
 		
	\[	\begin{array}{ll}\tag{$\mathcal{P}_1$}
		\label{pawstruct1}
			A_{i}:=&\{v\in N(P)\mid N(v)\cap P=\{v_i\}\}, 1 \leq i \leq 4\\
			B_{ij}:=&\{v\in N(P)\mid N(v)\cap P=\{v_i,v_j\}\}, 1 \leq i < j \leq 4\\
			T_i:=&\{v\in N(P) \mid N(v)\cap P= P \setminus \{v_i\}\}, 1 \leq i \leq 4\\ 
			D:=&\{v\in N(P) \mid P\subseteq N(v)\}\\
			X:=&V(G)\setminus (P\cup N(P))
	\end{array}\]	
\end{itemize}

Let $A=\bigcup\limits_{1 \leq i \leq 4}A_{i}$, $B = \bigcup\limits_{1 \leq i <j \leq 4}B_{ij}$ and $T = \bigcup\limits_{1 \leq i \leq 4}T_i$. Note that $(A, B, D, P, T, X)$ is partition of $V(G)$ when the structure of $G$ is considered around an induced paw.

	
%

\section{On the class of ($P_5$, paw)-free graphs and ($P_5,K_4$)-free graphs}\label{sec:pawk4}


In this section, we first show that the cop number of a $(P_5$, paw$)$-free graph is at most $2$ (Lemma~\ref{pawlemma}). Then we use this result to show that the cop number of any connected ($P_5, K_4$)-free graph $G$ is at most~$2$. The idea of the proof is as follows. If $G$ is paw-free, then by Lemma~\ref{pawlemma}, $cop(G)\leq 2$. If $G$ contains an induced paw, then we consider the presence of an induced co-banner in $G$. If $G$ contains an induced co-banner, then we consider the partition of $V(G)$ as defined in \ref{pawstruct1} around the paw contained in an induced co-banner of $G$. Then we have $A_1\neq \emptyset$ which help us to prove that the robber gets captured by two cops (Lemma~\ref{lem:K4FreeWithCobanner}). On the other hand, if $G$ is co-banner-free, then we consider the presence of an induced butterfly in $G$. If $G$ contains an induced butterfly, then we consider the partition of $V(G)$ as defined in \ref{pawstruct1} around a paw contained in an induced butterfly of $G$. Then we have $B_{12}\neq \emptyset$ which help us to prove that the robber gets captured by two cops (Lemma~\ref{lem:K4FreeWithButterfly}). If $G$ is butterfly-free, then we consider the partition of $V(G)$ as defined in \ref{pawstruct1} around any induced paw of $G$. Since $G$ is (co-banner, butterfly)-free, we have $A_1=B_{12}=\emptyset$. In fact, the intuition of the proof is to gradually prove that $A_1=B_{12}=\emptyset$ while taking the stronger hypothesis that $G$ is co-banner-free and butterfly-free. Finally we show that the robber gets captured by two cops (Lemma~\ref{lem:K4FreeWithoutkite}).

	\begin{lemma} \label{pawlemma}
		Let $G$ be a connected $(P_5,paw)$-free graph. Then $cop(G) \leq 2$.
	\end{lemma}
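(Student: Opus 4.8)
The plan is to show that two cops suffice on a connected $(P_5,\text{paw})$-free graph $G$ by exploiting the strong structural constraints that forbidding both $P_5$ and the paw places on the neighbourhoods appearing in the game. I would begin exactly as in the proof of Lemma~\ref{thmc4free}: place both cops at a vertex $v_1$, force the robber into a component $X$ of $G-N[v_1]$, advance Cop~$2$ to a vertex $v_2\in N(v_1)\cap N(X)$, and use $P_5$-freeness to conclude that after the robber responds we obtain an induced $P_4$ on $\{v_1,v_2,v_3,x\}$ with the robber at $x$, where $v_3\in X$ is a common neighbour of $v_2$ and $x$. The sets $S$ and $T$ defined there remain the right objects to analyse: if either is empty, the two-cop strategy of sweeping $\{v_2,v_3\}$ or $\{v_1,v_3\}$ captures the robber just as before, since any escape vertex $r$ that is nonadjacent to both swept vertices would complete a $P_5$.

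The crux, then, is the case $S\neq\emptyset$ and $T\neq\emptyset$. In Lemma~\ref{thmc4free} this case forced $H=C_3$; here the new hypothesis is paw-freeness rather than triangle-freeness, so I expect the paw constraint to do the work that $C_3$-freeness did there. The key step will be to examine the triangle $\{v_2,v_3,v_4\}$-type configurations that a paw would create. Concretely, for $s\in S$ and $t\in T$ I would track the adjacencies of $s$ and $t$ to the base path and to each other, and argue that any robber move to a vertex $r$ (nonadjacent to the two swept cops, hence forced by $P_5$-freeness to lie in $S$ or $T$) together with the existing edges produces an induced paw unless $r$ has very restricted neighbours. The aim is to show that the robber, once pinned at a vertex adjacent to $x$, cannot oscillate between $S$-type and $T$-type escape vertices: each attempted transition is blocked because the triangle $\{x,\cdot,\cdot\}$ plus a pendant vertex would be an induced paw. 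This should let the two cops, alternately occupying $\{v_1,v_3\}$ and $\{v_2,v_3\}$ (or a similar pair that always contains $v_3$ and a common neighbour of the robber's possible locations), steadily eliminate the robber's safe vertices until capture.

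I would organise the main case by first showing that $x$ dominates the relevant escape set, i.e.\ that every vertex the robber can safely reach is adjacent to $x$; this follows because $v_3$ (kept under a cop) together with $v_2$ or $v_1$ forces any non-neighbour of $x$ into a forbidden $P_5$. Then paw-freeness restricts the induced subgraph on $\{x\}\cup(\text{escape set})$: a vertex of $S$ and a vertex of $T$ both adjacent to $x$ cannot be adjacent to one another without creating either a paw (if a third common neighbour exists) or a $P_5$ (otherwise), so the escape set is, up to these constraints, an independent-like neighbourhood of $x$ on which a cop positioned at a neighbour of $x$ can close in. The main obstacle I anticipate is bounding the number of rounds: unlike the clean two-step contradiction in the $C_3$ case, paw-freeness may permit several escape vertices, so I must verify that the cops' sweep strictly shrinks the robber's territory each round (a standard ``guarding'' or shadow argument on the neighbourhood of $x$) rather than merely excluding one configuration. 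Once monotone shrinkage of the safe region is established, finiteness of $G$ gives capture and hence $cop(G)\le 2$.
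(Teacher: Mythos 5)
Your overall route---extending the Gy\'arf\'as-path analysis of Lemma~\ref{thmc4free} with paw-freeness in place of triangle-freeness---is genuinely different from the paper's proof and can in fact be completed, but as written the decisive step is missing, and your diagnosis of what remains is wrong. You end the main case ($S\neq\emptyset$, $T\neq\emptyset$) by saying you still ``must verify'' a monotone-shrinkage/guarding argument over many rounds; no such argument is needed (nor is it clear how to define a ``safe region'' that provably shrinks). The point you did not check is that the clean two-step contradiction of the $C_3$ case transfers verbatim, because every appeal to triangle-freeness there produces a triangle \emph{together with a pendant vertex}, i.e.\ an induced paw. Concretely, fix $s\in S$; with cops at $v_1,v_3$ the robber's escape vertex $r$ lies in $T$ (else $\{v_1,v_2,v_3,x,r\}$ is an induced $P_5$) and satisfies $r\not\sim s$, since otherwise $\{s,x,r\}$ is a triangle and $v_1$ is adjacent to $s$ only, giving the paw $\{s,x,r,v_1\}$. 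With cops then at $v_2,v_3$, any escape vertex $r'$ satisfies $r'\not\sim x$, since otherwise $\{r,r',x\}$ is a triangle and $v_3$ is adjacent to $x$ only, giving the paw $\{r,r',x,v_3\}$; then $P_5$-freeness on $\{s,v_1,v_2,r,r'\}$ forces $r'\sim s$ or $r'\sim v_1$, and either option yields an induced $P_5$ ($\{r',s,x,v_3,v_2\}$ or $\{r',v_1,v_2,v_3,x\}$ respectively). So the robber is caught two rounds after reaching $x$, exactly as in the $C_3$ case. Note also two inaccuracies in your sketch: it is false that every vertex the robber can safely reach is adjacent to $x$ (on the contrary, $r'\not\sim x$ is forced), and adjacency between $s\in S$ and $t\in T$ yields a paw outright, namely $\{s,t,x,v_1\}$, rather than your paw-or-$P_5$ dichotomy depending on a third common neighbour.

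For comparison, the paper's proof avoids the pursuit analysis entirely once a triangle exists: if $G$ is $C_3$-free it is $(P_5,C_3)$-free and Lemma~\ref{thmc4free} applies directly; otherwise, for any triangle $K=\{u_1,u_2,u_3\}$, paw-freeness forces every vertex of $N(K)$ to have at least two neighbours in $K$, and no vertex can be at distance two from $K$ (a neighbour $w$ of such a vertex $y$, being adjacent to, say, $u_1$ and $u_2$, would give the paw $\{u_1,u_2,w,y\}$). Hence $\{u_1,u_2\}$ is a dominating set of $G$ and two cops win immediately. That argument is shorter than your route, needs $P_5$-freeness only in the triangle-free case, and exposes the real structural content: paw-freeness plus a single triangle collapses the whole graph into the closed neighbourhood of that triangle, so no chase is required.
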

	
	\begin{proof}
		If $G$ is $C_3$-free, then by Lemma~\ref{thmc4free}, $cop(G) \leq 2$. So we may assume that $G$ contains a $C_3$, say with vertex set $K = \{u_1, u_2, u_3\}$. If for any vertex $x\in N(K)$, $|N(x)\cap K|=1$, then $K\cup \{x\}$ induces a paw, a contradiction. So every vertex of $N(K)$ is adjacent to at least two vertices of $K$. Now we show that every vertex of $G$ has a neighbor in $K$. If possible, then let $y$ be a vertex at distance~$2$ from $K$. Let $w$ be a neighbor of $y$ in $N(K)$. Now since $w\in N(K)$, $w$ is adjacent to at least two vertices of $K$. Without loss of generality, we may assume that $w$ is adjacent to $u_1$ and $u_2$. Then $\{u_1,u_2,w,y\}$ induces a paw, a contradiction. So we may conclude that such a vertex $y$ does not exist. Thus no vertex of $G$ is at distance~$2$ from $K$. Since $G$ is connected, every vertex of $G$ has a neighbor in $K$. Notice that $\{u_1,u_2\}$ is a dominating set of $G$. Therefore, $cop(G)\leq 2$.  
	\end{proof}

Let $G$ be a connected $(P_5,K_4)$-free graph containing an induced paw, say with vertex set $P=\{v_1,v_2,v_3,v_4\}$ and edge set $\{v_1v_2, v_2v_3, v_3v_4,v_2v_4\}$. Define sets $A_{i},B_{ij}, T_i, D,$ and $X$ around $P$ as defined in~\ref{pawstruct1} for every $1 \leq i,j \leq 4$ and $i<j$. Since $G$ is $K_4$-free, it can be noticed that $T_1=D=\emptyset$. So we have the following observation. 
	
\begin{obs}\label{i8}
		$T_1=D=\emptyset$.		
\end{obs}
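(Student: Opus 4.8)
The plan is to exploit the fact that, inside the paw $P$, the three vertices $v_2,v_3,v_4$ already form a triangle. Indeed, the defining edge set $\{v_1v_2,v_2v_3,v_3v_4,v_2v_4\}$ contains $v_2v_3$, $v_3v_4$, and $v_2v_4$, so $G[\{v_2,v_3,v_4\}]$ is a $C_3$. The entire argument then reduces to observing that no vertex outside $P$ can be adjacent to all three of $v_2,v_3,v_4$ without creating a forbidden $K_4$.

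First I would handle $T_1$. By definition, every $v\in T_1$ satisfies $N(v)\cap P = P\setminus\{v_1\}=\{v_2,v_3,v_4\}$, so $v$ is adjacent to each of $v_2,v_3,v_4$. Together with the triangle on $\{v_2,v_3,v_4\}$, this makes $\{v,v_2,v_3,v_4\}$ induce a $K_4$, contradicting the assumption that $G$ is $K_4$-free; hence $T_1=\emptyset$. The set $D$ is handled identically: any $v\in D$ has $P\subseteq N(v)$, so in particular $v$ is adjacent to $v_2,v_3,v_4$, and $\{v,v_2,v_3,v_4\}$ again induces a $K_4$, a contradiction. Thus $D=\emptyset$ as well.

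I do not expect any genuine obstacle here: the statement is an immediate structural consequence of the paw containing a triangle on $\{v_2,v_3,v_4\}$ combined with $K_4$-freeness, and both $T_1$ and $D$ are defined precisely so that their members are adjacent to all three triangle vertices. The only thing to be careful about is reading the definitions correctly, namely that $T_1$ corresponds to omitting the \emph{pendant-adjacent} behaviour at $v_1$ while still forcing adjacency to the whole triangle, and that $D$ forces adjacency to all of $P$; in both cases the relevant neighbourhood already contains $\{v_2,v_3,v_4\}$, which suffices.
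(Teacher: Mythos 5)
Your proof is correct and is exactly the argument the paper intends: the paper simply states that $K_4$-freeness gives $T_1=D=\emptyset$, and your write-up just makes explicit the underlying reason, namely that $\{v_2,v_3,v_4\}$ induces a triangle and any vertex of $T_1\cup D$ is adjacent to all three, yielding a forbidden $K_4$.
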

	
\begin{obs}\label{i9} The following hold. 
\begin{enumerate}
\item $N(X) \cap N(P) \subseteq A_2\cup B_{12}\cup B_{23}\cup B_{24}\cup T_2\cup T_3\cup T_4$.
\item For any vertex $v\in N(P)$ and any component $H$ of $G[X]$, either $[\{v\},V(H)]$ is complete or $[\{v\},V(H)]=\emptyset$.
\item Every vertex in $X$ is at distance $2$ from $P$.
\end{enumerate}
\end{obs}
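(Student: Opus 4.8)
The plan is to prove the three parts in the order (a), then (c) and (b) together, since (b) and (c) both reduce to a common path-extension argument that rests on~(a). Throughout I would keep in mind, from Observation~\ref{i8}, that $T_1=D=\emptyset$ because $G$ is $K_4$-free, and that in the paw $v_1$ is the pendant vertex (its only neighbour in $P$ is $v_2$) while $\{v_2,v_3,v_4\}$ is the triangle.

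For part~(a) I would argue by contradiction: assume some $w\in N(P)$ with a neighbour $x\in X$ lies in one of the forbidden classes $A_1,A_3,A_4,B_{13},B_{14},B_{34}$, and exhibit an induced $P_5$ in each case. In every case the induced $P_5$ has the form $x\,w\,v_i\,v_j\,v_k$, where $v_i\in N(w)\cap P$ and the remaining vertices $v_j,v_k$ are chosen inside $P$ so as to avoid $N(w)$; for instance $w\in A_1$ gives $x\,w\,v_1\,v_2\,v_3$, $w\in B_{13}$ gives $x\,w\,v_1\,v_2\,v_4$, and $w\in B_{34}$ gives $x\,w\,v_3\,v_2\,v_1$. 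The induced-ness checks are immediate: $x\in X$ has no neighbour in $P$, the definition of the class pins down $N(w)\cap P$, and $v_1v_3$ and $v_1v_4$ are non-edges of the paw. Using the paw automorphism that swaps $v_3$ and $v_4$ halves the casework. Since no forbidden class can meet $N(X)$, and $T_1=D=\emptyset$, the surviving possibilities are exactly $A_2,B_{12},B_{23},B_{24},T_2,T_3,T_4$, which is the asserted inclusion.

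The common tool for parts (b) and (c) is a statement I would isolate first: if $w\in N(X)\cap N(P)$, if $z\in X$ is a neighbour of $w$, and if $y\in X$ is a neighbour of $z$ that is \emph{not} adjacent to $w$, then $G$ contains an induced $P_5$. Indeed $y\,z\,w$ is already an induced $P_3$ (both $y,z\in X$ miss all of $P$), and by part~(a) the set $N(w)\cap P$ is one of the seven admissible types. A short type-by-type check shows that $w$ always has a neighbour $v_i\in P$ whose paw-neighbour $v'\in P$ satisfies $v'\notin N(w)$: e.g. $w\in A_2$ allows $v_i=v_2,\,v'=v_1$, and $w\in T_2$ (adjacent to $v_1,v_3,v_4$ but not $v_2$) allows $v_i=v_3,\,v'=v_2$. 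Then $y\,z\,w\,v_i\,v'$ is an induced $P_5$, since $y,z$ miss all of $P$, $w$ misses $v'$ by choice, and $y$ misses $w$ by hypothesis. This verification is the step I expect to be the main obstacle, because it is exactly where the pendant $v_1$ together with $K_4$-freeness (which removes $T_1$ and $D$) must be used, and it has to be carried out for each admissible type of~$w$.

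Granting this tool, parts~(c) and~(b) are short. For~(c), if some $x\in X$ had distance at least $3$ from $P$, then a shortest $x$–$P$ path would furnish three consecutive vertices $y,z,w$ with $y,z\in X$, $w\in N(P)$, $y\in N(z)$, and $y\notin N(w)$ (the last because $y$ lies at distance $\ge 2$ from $P$ while $w$ lies at distance $1$); since $w$ is adjacent to $z\in X$ it is in $N(X)\cap N(P)$, so the tool produces a forbidden $P_5$. For~(b), suppose $v\in N(P)$ were adjacent to some but not all of a component $H$ of $G[X]$. As $H$ is connected, following a path in $H$ from a neighbour of $v$ to a non-neighbour yields adjacent vertices $z,y\in V(H)\subseteq X$ with $z\in N(v)$ and $y\notin N(v)$; applying the tool with $w=v$ again gives an induced $P_5$, a contradiction. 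Hence $v$ is adjacent to all of $V(H)$ or to none of it.
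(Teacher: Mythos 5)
Your proposal is correct and follows essentially the same route as the paper: part (a) is handled by exhibiting the same induced $P_5$'s class by class, and your ``tool'' is precisely the paper's key step, namely that any admissible $w\in N(X)\cap N(P)$ has a neighbour $v_i\in P$ with a paw-neighbour $v'\notin N(w)$, so that $y\,z\,w$ with $y,z\in X$ and $yw\notin E(G)$ extends to an induced $P_5$. The only difference is organizational: the paper proves (b) first (propagating adjacency vertex-by-vertex along paths in a component of $G[X]$) and gets (c) as an immediate corollary, whereas you prove (c) directly from the tool via a shortest-path argument and (b) via a transition edge; both versions are sound.
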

\begin{proof}
(a) Let $u$ be an arbitrary vertex of $N(X) \cap N(P)$ and $x$ be a neighbor of $u$ in $X$. Since $\{x,u,v_1,v_2,v_3\}$ does not induce a $P_5$, $u\notin A_1 \cup A_3 \cup B_{14} \cup B_{34}$. Again since $\{x,u,v_4,v_2,v_1\}$ does not induce a $P_5$, we have $u\notin A_4 \cup B_{13}$. By Observation~\ref{i8}, $T_1=D=\emptyset$. Therefore, $u \in  A_2\cup B_{12}\cup B_{23}\cup B_{24}\cup T_2\cup T_3\cup T_4$. Since $u$ is an arbitrary vertex of $N(X) \cap N(P)$, (a) holds.
\smallskip

(b) If $[\{v\},V(H)]=\emptyset$, then we are done. Suppose that $[\{v\},V(H)]\neq\emptyset$. Let $x$ be a neighbor of $v$ in $H$. First we show that $v$ is adjacent to every vertex of $N(x)\cap X$. For the sake of contradiction, let $x'\in N(x)\cap X$ such that $x'$ is not adjacent to $v$. Since $v\in N(X)\cap N(P)$, by (a), $v\in  A_2\cup B_{12}\cup B_{23}\cup B_{24}\cup T_2\cup T_3\cup T_4$. Note that there exist distinct vertices $v_i,v_j\in P$ for $i,j \in \{1,2,3,4\}$ such that $vv_i, v_iv_j \in E(G)$ and  $vv_j\notin E(G)$. Then $\{v_j,v_{i},v,x,x'\}$ induces a $P_5$, a contradiction. So $v$ is adjacent to every vertex of $N(x)\cap X$. Now we show that $v$ is adjacent to every vertex of $H$. Let $y$ be an arbitrary vertex of $V(H)\setminus \{x\}$. If $y$ is adjacent to $x$, then, since $v$ is adjacent to every vertex of $N(x)\cap X$, $v$ is adjacent to $y$. If $y$ is not adjacent to $x$, then let $xx_1x_2\ldots x_ky;k\geq 1$ be an induced path between $x$ and $y$ in $H$. By our argument, we have that $v$ is adjacent to $x_1$. Again by our argument for $x_1$, we have that $v$ is adjacent to $x_2$. Following this way, we conclude that $v$ is adjacent to $y$. Now since $y$ is an arbitrary vertex of $V(H)\setminus \{x\}$, $v$ is adjacent to every vertex of $H$. Thus (b) holds.
\smallskip

(c) Let $H$ be a component of $G[X]$. Since $G$ is connected, we have $N(V(H))\cap N(P)\neq \emptyset$. Let $v\in N(V(H))\cap N(P)$. By (b), every vertex of $H$ is adjacent to $v$. This implies that every vertex of $H$ is at distance~$2$ from $P$. Since $H$ is an arbitrary component of $G[X]$, (c) holds.
\end{proof}

  In the following lemma, we show that the cop number of a connected ($P_5,K_4$)-free graph that has an induced co-banner is at most~$2$.

\begin{lemma}\label{lem:K4FreeWithCobanner}
	Let $G$ be a connected $(P_5,K_4)$-free graph. If $G$ has an induced co-banner, then $cop(G) \leq 2$.
\end{lemma}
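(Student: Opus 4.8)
The plan is to cash in the extra vertex that the co-banner provides. First I would locate, inside the given induced co-banner, an induced paw $P=\{v_1,v_2,v_3,v_4\}$ (triangle $v_2v_3v_4$ with $v_1$ pendant at $v_2$, as in~\ref{pawstruct1}) whose fifth co-banner vertex is adjacent to $v_1$ only; this is exactly the point of passing through a co-banner, and it yields $A_1\neq\emptyset$. I then fix a vertex $a\in A_1$ and set up the partition $(A,B,D,P,T,X)$ around $P$, invoking Observation~\ref{i8} ($T_1=D=\emptyset$) and Observation~\ref{i9}.

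Next I would build a small domination toolkit using $P_5$-freeness. Running the induced path $a\text{--}v_1\text{--}v_2\text{--}v_3$ and its variants, one checks that if $a$ missed a vertex of $A_3\cup A_4\cup B_{13}\cup B_{14}\cup B_{34}$ then an induced $P_5$ would appear; hence $a$ is adjacent to \emph{every} vertex of $A_3\cup A_4\cup B_{13}\cup B_{14}\cup B_{34}$. Combining this with the definitional coverage ($v_1$ dominates $A_1\cup T_2\cup B_{12}\cup B_{13}\cup B_{14}\cup T_3\cup T_4$, and $v_2$ dominates $\{v_1,v_3,v_4\}\cup A_2\cup B_{12}\cup B_{23}\cup B_{24}\cup T_3\cup T_4$) gives the key fact that $\{v_1,v_2,a\}$ dominates all of $V(G)\setminus X$. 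In parallel, Observation~\ref{i9} tells me that each component $H$ of $G[X]$ has its attachment set $S_H=N(V(H))\setminus X$ contained in $A_2\cup B_{12}\cup B_{23}\cup B_{24}\cup T_2\cup T_3\cup T_4$, with every vertex of $S_H$ \emph{complete} to $H$; reading class membership against $N(v_1)$ and $N(v_2)$ then shows $S_H\subseteq N(v_1)\cup N(v_2)$ for every such $H$.

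With the toolkit in hand, I would run the cops as follows. Station the two cops at $v_1$ and $v_2$. To survive, the robber must avoid $N[v_1]\cup N[v_2]$, i.e.\ it must sit in $A_3\cup A_4\cup B_{34}\cup X$; moreover, since $A_3\cup A_4\cup B_{34}$ has no neighbour in $X$ and any detour between the two regions passes through $S_H\subseteq N(v_1)\cup N(v_2)$, these cops split the safe region into two mutually unreachable parts, so the robber is committed either to $A_3\cup A_4\cup B_{34}$ or to a single component $H$ of $G[X]$. In the first case the whole region lies in $N[a]$ and in $N(v_3)\cup N(v_4)$, so advancing a cop to $a$ (a single step from $v_1$) or onto $\{v_3,v_4\}$ finishes quickly. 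In the $X$-case I would advance a cop from $v_2$ onto an attachment vertex $s\in S_H$: by the complete join $s$ dominates all of $H$, which drives the robber out of $X$ and back into $S_H\subseteq N(v_1)\cup N(v_2)$, the region already controlled by the two cops.

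The hard part will be the $X$-case when $S_H$ is \emph{two-sided}, i.e.\ it has exits in $N(v_2)\setminus N(v_3)$ and simultaneously exits in $N(v_3)\setminus N(v_2)$; tracing the classes, the only way this happens is through $T_2$-type attachments, which is precisely the class that $a$ does not dominate. For such a component the robber can oscillate between $H$ and an attachment vertex while the pursuing cop shuttles on and off $S_H$, so neither remaining cop alone holds all the exits during the approach. I expect the crux of the proof to be ruling out this oscillation: one must use further $P_5$-free structure of $S_H$ (for instance, that the $v_2$-side and the $T_2$-side attachments are forced to be linked, so that a single cop can cover every exit while the other lands on an attachment vertex) or set up a monovariant that forces the confined region to shrink with each round, after which capture follows from the complete-join domination established above.
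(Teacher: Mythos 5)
Your setup is the same as the paper's: extract the paw $P$ from the co-banner so that the fifth vertex $a$ lands in $A_1$, invoke Observations~\ref{i8} and~\ref{i9}, start the cops on $v_1,v_2$, and split on whether the robber sits in $B_{34}$ or in $X$ (note, by the way, that $A_3=A_4=\emptyset$ here: a vertex of $A_3\cup A_4$ gives an induced $P_5$ with $a$ whether or not it is adjacent to $a$, so your ``$a$ dominates $A_3\cup A_4$'' is true only vacuously). The problem is that neither of your two cases is actually finished, and the first one is dismissed with a claim that is false. In the $B_{34}$ case you assert that advancing a cop to $a$ ``finishes quickly.'' It does not: when Cop~1 steps from $v_1$ to $a$, the robber at $x\in B_{34}$ may flee to a neighbour $u\in A_1$ with $u\notin N[a]$. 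This configuration is perfectly consistent with $(P_5,K_4)$-freeness --- the set $\{u,x,a,v_1,v_2\}$ induces a banner, which is not forbidden. From $u$ the robber's only non-dominated exits lead back into $B_{34}$ (it has no neighbours in $X$ by Observation~\ref{i9}(a), and $T_2\cup A_1\subseteq N(v_1)$), so when Cop~1 returns to $v_1$ you are exactly where you started: the robber oscillates between $A_1$ and $B_{34}$ forever. Killing this oscillation is precisely what the paper's Claims~\ref{lem:K4FreeWithCobanner-cl3} and~\ref{lem:K4FreeWithCobanner-cl4}, together with the closing argument built around a vertex $a_2\in N(x)\cap A_2$ adjacent to all of $A_1$, are for; none of that is replaceable by a one-step advance to $a$ or to $\{v_3,v_4\}$.

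In the $X$ case you correctly isolate the crux --- components of $G[X]$ whose attachment set meets both $T_2$ and $N(v_2)$ --- but you then explicitly leave it unresolved (``I expect the crux of the proof to be ruling out this oscillation''), so by your own account the proof is incomplete there. For the record, the paper closes this exact hole with a trichotomy on $N(x)\cap T_2$: if it is empty, every attachment vertex of the robber's component is adjacent to $v_2$ and two moves finish (Claim~\ref{lem:K4FreeWithCobanner-cl0.1}); if every vertex of $N(x)\cap T_2$ has a non-neighbour in $N(x)\cap A_2$, then $P_5$-freeness forces $a_1$ to be adjacent to \emph{every} vertex of $N(x)\cap T_2$, so a cop parked at $a_1$ covers the $T_2$-side exits while the other cop, standing on a vertex of $N(x)\cap T_2$ (which is complete to the robber's component by Observation~\ref{i9}(b)), flushes the robber out (Claim~\ref{lem:K4FreeWithCobanner-cl1}); otherwise some $c\in N(x)\cap T_2$ dominates $N(x)\cap A_2$, and the cops at $c$ and $v_1$ cover all exits at once (Claim~\ref{lem:K4FreeWithCobanner-cl2}). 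Your guess that ``the $v_2$-side and the $T_2$-side attachments are forced to be linked'' is the right intuition, but it is exactly the part that has to be proved, and your proposal does not prove it.
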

\begin{proof}
	Suppose that $\{a_1,v_1,v_2,v_3,v_4\}$ induces a co-banner in $G$ with edge set $\{a_1v_1,v_1v_2,v_2v_3,$ $v_3v_4,v_4v_2\}$. Note that $P=\{v_1,v_2,v_3,v_4\}$ induces a paw in $G$. Define the sets $A_i,B_{ij},T_i,D$, and $X$ around $P$ as defined in \ref{pawstruct1} for every $1 \leq i,j \leq 4$ and $i<j$. Note that $a_1\in A_1$. Again, $A_3=\emptyset$; otherwise for any $u \in A_3$, either $\{a_1,v_1,v_2,v_3,u\}$ or $\{u,a_1,v_1,v_2,v_4\}$ induces a $P_5$. Similarly, we can show that $A_4=\emptyset$. In the first turn, we place Cop~$1$ and Cop~$2$ at $v_1$ and $v_2$, respectively. To avoid immediate capture, the robber should choose a vertex $x$ that is not adjacent to $v_1$ and $v_2$. Since $A_3=A_4 = \emptyset$, we have $x \in B_{34}\cup X$. To proceed further, we first prove a series of claims.
	
\begin{claim}\label{lem:K4FreeWithCobanner-cl0}
	$N(X) \cap N(P)\subseteq A_2\cup B_{12}\cup T_2\cup T_3\cup T_4$.
	\end{claim}
	\begin{proof}[Proof of Claim \ref{lem:K4FreeWithCobanner-cl0}]
	Let $u$ be an arbitrary vertex of $N(X) \cap N(P)$ and $w\in X$ be a neighbor of $u$. By Observation~\ref{i9}(a), $a_1$ has no neighbor in $X$. In particular, $a_1$ is not adjacent to $w$. Now if $u\in B_{23}\cup B_{24}$, then either $\{a_1,v_1,v_2,u,w\}$ or $\{v_3,v_4,u,a_1,v_1\}$ induces a $P_5$ which is a contradiction. So $u\notin B_{23}\cup B_{24}$. Now by Observation~\ref{i9}(a), $u\in A_2\cup B_{12}\cup T_2\cup T_3\cup T_4$. Since $u$ is an arbitrary vertex of $N(X)\cap N(P)$, we have $N(X) \cap N(P)\subseteq A_2\cup B_{12}\cup T_2\cup T_3\cup T_4$. 
	\end{proof}
	
	\begin{claim}\label{lem:K4FreeWithCobanner-cl0.1}
If $x\in X$ and $N(x) \cap T_2= \emptyset$, then the robber gets captured.
\end{claim}
\begin{proof}[Proof of Claim~\ref{lem:K4FreeWithCobanner-cl0.1}]
Suppose that $x\in X$ and $N(x) \cap T_2= \emptyset$. By Observation~\ref{i9}(c), $x$ is at distance~$2$ from $P$ and hence $x$ has a neighbor in $N(P)$, say $y$. Since $N(x)\cap T_2=\emptyset$, we have $y\notin T_2$; thus $y$ is adjacent to $v_2$ by Claim~\ref{lem:K4FreeWithCobanner-cl0}. Moreover, since $y$ is arbitrary, any neighbor of $x$ in $N(P)$ is a neighbor $v_2$. In the next turn, Cop~$1$ moves to $v_2$ and Cop~$2$ moves to $y$. So if the robber moves to a vertex of $N(P)$, then it gets captured by Cop~$1$. Hence the robber should stay in $X$. Now by Observation~\ref{i9}(b), $y$ is adjacent to every vertex of $N[x]\cap X$. So the robber gets captured by Cop~$2$. 
\end{proof}

\begin{claim}\label{lem:K4FreeWithCobanner-cl1}
If $x\in X$, $N(x) \cap T_2\neq \emptyset$, and every vertex of $N(x) \cap T_2$ has a non-neighbor in $N(x) \cap A_2$, then the robber gets captured.
\end{claim}
\begin{proof}[Proof of Claim \ref{lem:K4FreeWithCobanner-cl1}]
Suppose that $x\in X$, $N(x) \cap T_2\neq \emptyset$, and every vertex of $N(x) \cap T_2$ has a non-neighbor in $N(x) \cap A_2$. Recall that $a_1\in A_1$. Since $x\in X$, by Claim~\ref{lem:K4FreeWithCobanner-cl0}, $a_1$ is not adjacent to $x$. In the next turn, Cop~$1$ and Cop~$2$ move to $a_1$ and $v_1$, respectively. The robber can stay in $X$ or move to a vertex $y\in N(x)\cap N(P)$ that is not adjacent to $a_1$ and $v_1$ if exists. Suppose that such a vertex $y$ exists. Since $y\in N(X)\cap N(P)$ and $y$ is not adjacent to $v_1$, by Claim~\ref{lem:K4FreeWithCobanner-cl0}, $y\in A_2$. Now since $a_1$ is not adjacent to $y$, $\{a_1,v_1,v_2,y,x\}$ induces a $P_5$, a contradiction. So such a vertex $y$ does not exist and hence the robber should stay in $X$. By Observation~\ref{i9}(b), without loss of generality, we may assume that it stays at $x$. In the next turn, Cop~$1$ stays at $a_1$ and Cop~$2$ moves to a vertex $c\in N(x)\cap T_2$. Such a vertex $c$ exists since $N(x) \cap T_2\neq \emptyset$. To avoid immediate capture, the robber should move to a vertex $r \in X\cup N(P)$ that is not adjacent to $a_1$ and $c$. By Observation~\ref{i9}(b), $c$ is adjacent to every vertex of $N[x]\cap X$. So $r\notin X$ and hence $r \in N(P)$. Since $r$ is adjacent to $x$, $r\in N(X)\cap N(P)$. Then by Claim~\ref{lem:K4FreeWithCobanner-cl0}, $r\in A_2\cup B_{12}\cup T_2\cup T_3\cup T_4$. Now we show that $a_1$ is adjacent to every vertex of $N(x)\cap T_2$. For the sake of contradiction, suppose that $w\in N(x)\cap T_2$ is not adjacent to $a_1$. By our assumption in the claim, $w$ has a non-neighbor in $N(x)\cap A_2$, say $a_2$. Recall that $a_1$ is not adjacent to $x$. Then either $\{a_1,a_2,x,w,v_3\}$ or $\{a_1,v_1,v_2,a_2,x\}$ induces a $P_5$, a contradiction. So $a_1$ is adjacent to every vertex of $N(x)\cap T_2$. In particular, $a_1$ is adjacent to $c$. Now since $a_1$ is adjacent to every vertex of $N(x)\cap T_2$ and not adjacent to $r$, we have $r\notin T_2$. So $r\in A_2\cup B_{12}\cup T_3\cup T_4$. Then $\{v_2,r,x,c,a_1\}$ induces a $P_5$, a contradiction. So such a vertex $r$ does not exist. Hence the robber cannot escape from $x$ and gets captured by Cop~$2$.
\end{proof}

\begin{claim}\label{lem:K4FreeWithCobanner-cl2}
If $x\in X$ and there exists a vertex in $N(x) \cap T_2$ that is adjacent to every vertex of $N(x) \cap A_2$, then the robber gets captured.
\end{claim}
\begin{proof}[Proof of Claim \ref{lem:K4FreeWithCobanner-cl2}]
Let $x\in X$ and $c$ be a vertex of $N(x)\cap T_2$ that is adjacent to every vertex of $N(x) \cap A_2$. In the next turn, Cop~$1$ moves to $c$ and Cop~$2$ moves to $v_1$. To avoid immediate capture, the robber should move to a vertex $r$ that is not adjacent to $c$ and $v_1$. By Observation~\ref{i9}(b), $c$ is adjacent to every vertex of $N[x]\cap X$. So $r\notin X$ and hence $r\in N(x)\cap N(P)$. Since $x\in X$, we have $r\in N(X)\cap N(P)$. Now by Claim~\ref{lem:K4FreeWithCobanner-cl0}, $r\in A_2\cup B_{12}\cup T_2\cup T_3\cup T_4$. Again since $r$ is not adjacent to $v_1$, we have $r\in A_2$. This is a contradiction since $c$ is adjacent to every vertex of $N(x)\cap A_2$. So such a vertex $r$ does not exist. Hence the robber cannot escape from $x$ and gets captured by Cop~$1$.
\end{proof}

\begin{claim}\label{lem:K4FreeWithCobanner-cl3}
If $x\in B_{34}$ and $B_{13}\neq\emptyset$ or $B_{14}\neq \emptyset$, then the robber gets captured.
\end{claim}	
\begin{proof}[Proof of Claim~\ref{lem:K4FreeWithCobanner-cl3}]
 Suppose that $x\in B_{34}$ and $B_{13}\neq\emptyset$. Let $b$ be any vertex of $B_{13}$. Since $\{b,v_1,v_2,v_4,x\}$ does not induce a $P_5$, $b$ is adjacent to $x$.	Again, $b$ is adjacent to $a_1$; otherwise $\{v_4,v_3,b,v_1,a_1\}$ induces a $P_5$. First we show that $B_{14}=\emptyset$. For the sake of contradiction, let $b'$ be any vertex of $B_{14}$. Note that $b'$ is a neighbor of $a_1$; otherwise $\{v_3,v_4,b',v_1,a_1\}$ induces a $P_5$. If $b$ is not adjacent to $b'$, then $\{b,a_1,b',v_4,v_2\}$ induces a $P_5$, a contradiction. So $b$ is adjacent to $b'$. Again since $\{a_1,v_1,v_2,v_3,x\}$ does not induce a $P_5$, $a_1$ is adjacent to $x$. Then either $\{a_1,b,b',x\}$ induces a $K_4$ or $\{b', v_1,v_2,v_3,x\}$ induces a $P_5$ which is a contradiction. Thus $B_{14}=\emptyset$. 

In the next turn, Cop~$1$ and Cop~$2$ moves to $b$ and $v_1$, respectively. Recall that $b$ is adjacent to $x$. To avoid immediate capture, the robber must move to a vertex $r$ that is not adjacent to $v_1$ and $b$. By Observation~\ref{i8}, $r\notin T_1$. Since $x\in B_{34}$ and $r$ is a neighbor of $x$, by Claim~\ref{lem:K4FreeWithCobanner-cl0}, $r\notin X$. Recall that $A_3=\emptyset$ and $A_4=\emptyset$ and hence $r\notin A_3\cup A_4$. Again since $r$ is not adjacent to $v_1$ and $b$, we have $r\in \{v_4\}\cup A_2\cup B_{23}\cup B_{24}\cup B_{34}$. Since neither $\{r,v_4,v_3,x\}$ induces a $K_4$ nor $\{v_1,b,v_3,v_4,r\}$ induces a $P_5$, we have $r\notin B_{34} \cup B_{24}$. Recall that $b$ is adjacent to $a_1$. If $r\in A_2$, then $\{a_1,b,v_3,v_2,r\}$ or $\{b,a_1,r,v_2,v_4\}$ induces a $P_5$, a contradiction. So $r \notin A_2$ and hence $r\in \{v_4\}\cup B_{23}$. In the next turn, Cop~$1$ moves to $v_3$ and Cop~$2$ moves to $v_2$. If $r=v_4$, then, since $A_4=B_{14}=\emptyset$, we have $N[r]\subseteq N[v_2]\cup N[v_3]$ implying that the robber gets captured. So we may assume that $r \in B_{23}$. Note that $r$ does not have any neighbor in $A_1$; otherwise for any neighbor $a\in A_1$ of $r$, $\{v_1,a,r,v_3,v_4\}$ induces a $P_5$. Since $A_4=B_{14}=\emptyset$, to avoid immediate capture, the robber should move to a vertex $r' \in X$. Since $b\in B_{13}$, by Claim~\ref{lem:K4FreeWithCobanner-cl0}, $r'$ is not adjacent to $b$. Then $\{r',r,v_2,v_1,b\}$ induces a $P_5$ which is a contradiction. So such a vertex $r'$ does not exist. Hence the robber cannot escape from $r$ and gets captured. Similarly, we can show that the robber gets captured if $B_{14}\neq \emptyset$ (due to symmetry). 
\end{proof}	

\begin{claim}\label{lem:K4FreeWithCobanner-cl4}
If $x\in B_{34}$, $B_{13}=B_{14}=\emptyset$, and every vertex in $N(x) \cap A_2$ has a non-neighbor in $A_1$, then the robber gets captured.
\end{claim}
\begin{proof}[Proof of Claim~\ref{lem:K4FreeWithCobanner-cl4}]
Suppose that $x\in B_{34}$, $B_{13}=B_{14}=\emptyset$, and every vertex in $N(x) \cap A_2$ has a non-neighbor in $A_1$. In the next turn, Cop~$1$ stays at $v_1$ and Cop~$2$ moves to $v_3$. To avoid immediate capture, the robber must move to a vertex $r$ that is not adjacent to $v_1$ and $v_3$. Since $x\in B_{34}$, by Claim~\ref{lem:K4FreeWithCobanner-cl0}, $r\notin X$. Again since $r$ is not adjacent to $v_1$ and $v_3$, we have $r\in A_2\cup A_4\cup B_{24}$. Recall that $A_3=A_4=\emptyset$. So $r\notin A_4$ and hence $r\in A_2\cup B_{24}$. 

First assume that $r\in B_{24}$. In the next turn, Cop~$1$ moves to $v_2$ and Cop~$2$ stays at $v_3$. To avoid immediate capture, the robber must move to a vertex $r'$ that is not adjacent to $v_2$ and $v_3$. Since $A_4 = B_{14}= \emptyset$, we have $r'\in A_1 \cup X$. Again since $\{v_3,v_4,r,r',v_1\}$ does not induce a $P_5$, $r'\notin A_1$. So $r'\in X$. By Claim~\ref{lem:K4FreeWithCobanner-cl0}, $a_1$ is not adjacent to $r'$. Note that $a_1$ is not adjacent to $r$; otherwise $\{v_3,v_4,r,a_1,v_1\}$ induces a $P_5$. Then $\{a_1,v_1,v_2,r,r'\}$ induces a $P_5$ which is a contradiction. So such a vertex $r'$ does not exist. Hence the robber cannot escape from $r$ and gets captured by Cop~$1$. 
	
	Now suppose that $r\in A_2$. Since $r\in N(x)\cap A_2$, due to our assumption in the claim, $r$ has a non-neighbor in $A_1$, say $a'$. Recall that Cop~$1$, Cop~$2$, and the robber are at $v_1,v_3,$ and $r$, respectively. In the next turn, Cop~$1$ moves to $v_2$ and Cop~$2$ stays at $v_3$. To avoid immediate capture, the robber must move to a vertex $r'$ that is not adjacent to $v_2$ and $v_3$. Since $A_4=B_{14}=\emptyset$, we have $r'\in A_1\cup X$. Suppose that $r'\in X$. Note that by Claim~\ref{lem:K4FreeWithCobanner-cl0}, $a'$ is not adjacent to $r'$. Then $\{a',v_1,v_2,r,r'\}$ induces a $P_5$ which is a contradiction. So we may assume that $r'\in A_1$. In the next turn, Cop~$1$ and Cop~$2$ move to $r$ and $v_2$, respectively. To avoid immediate capture, robber must move to a vertex $r''$ that is not adjacent to $r$ and $v_2$. Since $r'\in A_1$, by Claim~\ref{lem:K4FreeWithCobanner-cl0}, $r''\notin X$. Again since $A_3=A_4=B_{13}=B_{14}=\emptyset$ and $r''$ is not adjacent to $v_2$, we have $r''\in A_1\cup B_{34}\cup T_2$. Note that $r''\notin A_1$; otherwise $\{r'',r',r,v_2,v_3\}$ induces a $P_5$. Recall that $a'\in A_1$ is a non-neighbor of $r$. Since neither $\{a',v_1,v_2,v_3,r''\}$ nor $\{r,v_2,v_1,a',r''\}$ induces a $P_5$, we have $r''\notin B_{34}$. So $r''\in T_2$. Then either $\{r'',x,v_3,v_4\}$ induces a $K_4$ or $\{r,x,v_3,r'',v_1\}$ induces a $P_5$ which is a contradiction. So such a vertex $r''$ does not exist. Hence the robber cannot escape from $r'$ and gets captured by Cop~$1$.
\end{proof}

We now return to the proof of Lemma~\ref{lem:K4FreeWithCobanner}. Recall that Cop~$1$, Cop~$2$, and the robber are at $v_1$, $v_2$, and $x\in B_{34}\cup X$, respectively. First assume that $x\in X$. If $N(x) \cap T_2= \emptyset$, then by Claim~\ref{lem:K4FreeWithCobanner-cl0.1}, the robber gets captured. So we may assume that $N(x) \cap T_2\neq \emptyset$. If every vertex of $N(x) \cap T_2$ has a non-neighbor in $N(x) \cap A_2$, then by Claim~\ref{lem:K4FreeWithCobanner-cl1}, the robber gets captured. So we may assume that there exists a vertex in $N(x) \cap T_2$ that is adjacent to every vertex of $N(x) \cap A_2$. Then by Claim~\ref{lem:K4FreeWithCobanner-cl2}, the robber gets captured.

	Now assume that $x\in B_{34}$. If $B_{13}\neq\emptyset$ or $B_{14}\neq\emptyset$, then by Claim~\ref{lem:K4FreeWithCobanner-cl3}, the robber gets captured. So assume that $B_{13}=B_{14}=\emptyset$. Now if every vertex of $N(x) \cap A_2$ has a non-neighbor in $A_1$, then by Claim~\ref{lem:K4FreeWithCobanner-cl4}, the robber gets captured. Note that Claim~\ref{lem:K4FreeWithCobanner-cl4} also includes the case $N(x)\cap A_2=\emptyset$. So we may assume that there exists a vertex $a_2\in N(x)\cap A_2$ such that $a_2$ is adjacent to every vertex of $A_1$. Recall that the cops are at $v_1$ and $v_2$ whereas the robber is at $x\in B_{34}$. In the next turn, Cop~$1$ and Cop~$2$ move to $v_2$ and $a_2$, respectively. Now if the robber does not move, then, since $a_2$ is adjacent to $x$, it gets captured by Cop~$2$. To avoid immediate capture, it must move to a vertex $r$ that is not adjacent to $v_2$ and $a_2$. Since $x\in B_{34}$, by Claim~\ref{lem:K4FreeWithCobanner-cl0}, $x$ does not have any neighbor in $X$ implying that $r\notin X$. Recall that $A_3=A_4=B_{13}=B_{14}=\emptyset$ and $a_2$ is adjacent to every vertex of $A_1$. So $r\in B_{34}\cup T_2$. Then $\{x,r,v_3,v_4\}$ induces a $K_4$ which is a contradiction. So such a vertex $r$ does not exist. Hence the robber cannot escape from $x$ and gets captured implying that $cop(G)\leq 2$. This completes the proof of Lemma~\ref{lem:K4FreeWithCobanner}.
\end{proof}

	In the following lemma, we show that the cop number of any connected ($P_5,K_4$, co-banner)-free graph that has an induced butterfly is at most~$2$. The idea of the proof is similar to the proof of Lemma~\ref{lem:K4FreeWithCobanner}.

\begin{lemma}\label{lem:K4FreeWithButterfly}
	Let $G$ be a connected $(P_5,K_4$, co-banner$)$-free graph that has an induced butterfly. Then $cop(G) \leq 2$.
\end{lemma}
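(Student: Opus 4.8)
The plan is to mirror the strategy of Lemma~\ref{lem:K4FreeWithCobanner}, but now exploiting the presence of an induced butterfly to force $B_{12}\neq\emptyset$ (just as the co-banner forced $A_1\neq\emptyset$). Concretely, an induced butterfly consists of two triangles sharing a common vertex. I would identify one of the triangles with the paw's triangle $\{v_2,v_3,v_4\}$ and let the butterfly's fifth vertex, call it $b$, be the remaining vertex of the second triangle that hangs off the shared vertex $v_2$. Since the butterfly has edges making $b$ adjacent to $v_2$ and to exactly one more vertex forming the second triangle with $v_2$, after choosing the paw vertices appropriately I expect $b\in B_{12}$ or a similarly distinguished two-neighbor set. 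I would then set up the partition $(A_i,B_{ij},T_i,D,X)$ of Observation~\ref{i9} around this paw, invoke Observation~\ref{i8} to kill $T_1=D=\emptyset$, and use the co-banner-freeness to eliminate further sets: the absence of an induced co-banner should force several of the $A_i$ or $B_{ij}$ to vanish, shrinking $N(X)\cap N(P)$ just as Claim~\ref{lem:K4FreeWithCobanner-cl0} did.

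\emph{First} I would place the two cops on two adjacent vertices of the paw (the natural choice being $v_1$ and $v_2$, or the shared vertex $v_2$ and a triangle vertex), forcing the robber into $B_{34}\cup X$ or an analogous ``far'' set. \emph{Then} I would prove the butterfly-analogue of Claim~\ref{lem:K4FreeWithCobanner-cl0}, namely a sharpened description of $N(X)\cap N(P)$ using both the $K_4$-freeness and co-banner-freeness. The case analysis would split on whether the robber sits in $X$ or in the ``opposite'' two-neighbor set: for the robber in $X$, I would run the same three-case argument on $N(x)\cap T_2$ (empty; nonempty with universal non-neighbors in $A_2$; nonempty with a vertex dominating $N(x)\cap A_2$) that appears in Claims~\ref{lem:K4FreeWithCobanner-cl0.1}--\ref{lem:K4FreeWithCobanner-cl2}, since those arguments only used the partition structure and Observation~\ref{i9}(b),(c), which remain available. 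The key new input is that $b\in B_{12}$ (a vertex adjacent to both $v_1$ and $v_2$) gives the cops an extra ``anchor'' to cut off the robber's escape routes through $N(P)$, playing the role that $a_1\in A_1$ played before.

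\emph{The hard part will be} the symmetric treatment of the case where the robber occupies the far set $B_{34}$ (or its butterfly-analogue), which in Lemma~\ref{lem:K4FreeWithCobanner} required the delicate Claims~\ref{lem:K4FreeWithCobanner-cl3} and~\ref{lem:K4FreeWithCobanner-cl4}, where one must first eliminate $B_{13}$ and $B_{14}$ and then carry out a multi-round chase that repeatedly invokes $P_5$-freeness and $K_4$-freeness to trap a moving robber. With $b\in B_{12}$ in hand, I expect the co-banner-freeness to let me rule out additional configurations (for instance, a vertex in $B_{13}$ together with $b$ and the paw should create a forbidden co-banner or $K_4$), which should \emph{simplify} rather than complicate this phase relative to the co-banner lemma; the main technical burden is verifying that every remaining escape vertex $r,r',r''$ for the robber yields an induced $P_5$, an induced $K_4$, or an induced co-banner, thereby contradicting the hypotheses. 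I would organize this as a sequence of claims parallel to Claims~\ref{lem:K4FreeWithCobanner-cl0.1}--\ref{lem:K4FreeWithCobanner-cl4}, then assemble them in a short concluding paragraph exactly as in the proof of Lemma~\ref{lem:K4FreeWithCobanner}, showing that in every branch the robber is captured and hence $cop(G)\leq 2$.
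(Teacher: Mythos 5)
Your high-level plan --- extract $b\in B_{12}$ from the butterfly, partition around the paw via \ref{pawstruct1}, prove a sharpened description of $N(X)\cap N(P)$, and use $b$ as the cops' anchor --- is essentially the paper's proof. The paper's sharpened claim (Claim~\ref{lem:K4FreeWithButterfly-cl2}) is $N(P)\cap N(X)\subseteq N(b)\cap(B_{23}\cup B_{24}\cup T_2)$: co-banner-freeness forces every vertex of $N(X)\cap N(P)$ to be adjacent to $b$ (and to $v_2$), so one cop at $b$ and the other at a neighbor $u\in N(P)$ of the robber's component of $G[X]$ (which $u$ dominates entirely, by Observation~\ref{i9}(b)) cover every possible exit.

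However, one concrete step of your plan would fail as stated: Claims~\ref{lem:K4FreeWithCobanner-cl0.1}--\ref{lem:K4FreeWithCobanner-cl2} of Lemma~\ref{lem:K4FreeWithCobanner} do \emph{not} ``only use the partition structure and Observation~\ref{i9}(b),(c).'' Claim~\ref{lem:K4FreeWithCobanner-cl1} stations a cop at $a_1\in A_1$ and argues with $a_1$'s adjacencies throughout, and Claims~\ref{lem:K4FreeWithCobanner-cl0.1} and~\ref{lem:K4FreeWithCobanner-cl2} rest on Claim~\ref{lem:K4FreeWithCobanner-cl0}, whose proof also uses $a_1$. In the butterfly setting no such vertex exists: co-banner-freeness forces $A_1=\emptyset$ (the paw plus a pendant at $v_1$ is a co-banner), so the three-case analysis on $N(x)\cap T_2$ cannot be transplanted. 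Fortunately it is also unnecessary: once you have the sharpened claim above, the $X$ case closes in a single round with no case split on $T_2$. Relatedly, your opening placement of the cops at $v_1,v_2$ silently needs $A_1=A_3=A_4=\emptyset$ to confine the robber to $B_{34}\cup X$; all three equalities do hold (each such vertex yields a $P_5$ or a co-banner together with $b$), but they must be checked. The paper instead opens with the cops at $v_3$ and $v_4$, so the escape set is $\{v_1\}\cup A_2\cup B_{12}\cup X$ and the $B_{34}$ case never arises; it is replaced by Claim~\ref{lem:K4FreeWithButterfly-cl1}, that every vertex with a neighbor in $\{v_1\}\cup A_2\cup B_{12}$ is adjacent to $v_2$ or $v_3$. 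Your placement can also be made to work --- your prediction that co-banner-freeness collapses the far case is right, since $B_{13}=B_{14}=\emptyset$ (via $P_5$ or co-banner with $b$, not $K_4$ as you guessed) and, by the sharpened claim, no $B_{34}$-vertex has a neighbor in $X$, so that robber has no escape once the cops occupy $v_2$ and $v_3$ --- but only after these repairs.
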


\begin{proof}
Let $\{b,v_1,v_2,v_3,v_4\}$ induce a butterfly in $G$ with edge set $\{bv_1,v_1v_2,bv_2,v_2v_3,v_3v_4,v_4v_2\}$. Note that $P=\{v_1,v_2,v_3,v_4\}$ induces a paw in $G$. Define the sets $A_i,B_{ij},T_i,D$, and $X$ around $P$ for every $1 \leq i,j \leq 4$ and $i<j$ as defined in \ref{pawstruct1}. Note that $b \in B_{12}$. Again note that $A_1=\emptyset$; otherwise $G[P\cup A_1]$ contains an induced co-banner. If $B_{14} \neq \emptyset$, then for any $b' \in B_{14}$, $\{b,v_1,b',v_4,v_3\}$ induces a $P_5$ or a co-banner, a contradiction. So $B_{14}=\emptyset$. In the first turn, we place Cop~$1$ and Cop~$2$ at $v_3$ and $v_4$, respectively. To avoid immediate capture, the robber must choose a vertex $r$ that is not adjacent to $v_3$ and $v_4$. Since $A_1=\emptyset$, we have $r\in \{v_1\} \cup A_2 \cup B_{12} \cup X$. 
 
 \begin{claim}\label{lem:K4FreeWithButterfly-cl1}
 If a vertex $w$ has a neighbor in $\{v_1\}\cup A_2\cup B_{12}$, then $w$ is adjacent to $v_2$ or $v_3$.
 \end{claim}
 \begin{proof}[Proof of Claim~\ref{lem:K4FreeWithButterfly-cl1}]
 Let $w$ be a vertex that is not adjacent to $v_2$ and $v_3$. Note that $w$ is not a vertex of the butterfly induced by $\{b,v_1,v_2,v_3,v_4\}$. To prove the claim, it is sufficient to show that $w$ does not have any neighbor in $\{v_1\}\cup A_2\cup B_{12}$. For the sake of contradiction, suppose that $w$ has a neighbor $u$ in $\{v_1\}\cup A_2\cup B_{12}$. Note that $w$ is adjacent to $v_4$; otherwise $\{w,u,v_2,v_3,v_4\}$ induces a co-banner. If $w$ is adjacent to $v_1$, then $w\in B_{14}$, a contradiction to the fact that $B_{14}=\emptyset$. So $w$ is not adjacent to $v_1$. Recall that $b\in B_{12}$. Note that $w$ is not adjacent to $b$; otherwise $\{v_1,b,w,v_4,v_3\}$ induces a $P_5$. Now $\{v_1,b,v_2,v_4,w\}$ induces a co-banner, a contradiction. Therefore, $w$ does not have any neighbor in $\{v_1\}\cup A_2\cup B_{12}$.
 \end{proof}
 
 \begin{claim}\label{lem:K4FreeWithButterfly-cl2}
 $N(P)\cap N(X) \subseteq N(b) \cap (B_{23}\cup B_{24}\cup T_2)$.
 \end{claim}
 \begin{proof}[Proof of Claim~\ref{lem:K4FreeWithButterfly-cl2}]
 Let $x$ be an arbitrary vertex of $X$. By Observation~\ref{i9}(c), $x$ is at distance $2$ from $P$ and hence $N(x) \cap N(P)\neq \emptyset$. Let $u\in N(x) \cap N(P)$. Since $\{v_3,v_4,v_2,u,x\}$ does not induce a co-banner, $u \notin A_2\cup B_{12}$. Again since neither $\{b,u,v_1,v_2\}$ induces a $K_4$ nor $\{v_4,v_3,u,v_1,b\}$ induces a $P_5$, we have $u \notin T_3 \cup T_4$. Hence by Observation~\ref{i9}(a), $u \in B_{23}\cup B_{24}\cup T_2$ implying that $N(x) \cap N(P) \subseteq B_{23}\cup B_{24}\cup T_2$. In particular, $b$ is not adjacent to $x$. Since $\{b,v_1,v_2,u,x\}$ does not induce a co-banner, $b$ is a neighbor of $u$. So we have $N(x)\cap N(P)\subseteq N(b)$ and hence $N(x)\cap N(P)\subseteq N(b) \cap (B_{23}\cup B_{24}\cup T_2)$. Since $x$ is an arbitrary vertex of $X$, we have $N(P)\cap N(X) \subseteq N(b) \cap (B_{23}\cup B_{24}\cup T_2)$.
 \end{proof}
	
 We now return to the proof of Lemma~\ref{lem:K4FreeWithButterfly}. First suppose that $r\in  \{v_1\} \cup A_2 \cup B_{12}$. In the next turn, Cop~$1$ stays at $v_3$ and Cop~$2$ moves to $v_2$. Now by Claim~\ref{lem:K4FreeWithButterfly-cl1}, every vertex of $N[r]$ is adjacent to $v_2$ or $v_3$. So wherever the robber moves, it gets captured. Hence we may assume that $r\in X$. By Observation~\ref{i9}(c) and Claim~\ref{lem:K4FreeWithButterfly-cl2}, there exists a vertex $u \in N(P)$ that is a common neighbor of $r$ and $v_i$ for some $i \in \{3,4\}$. In the next turn, Cop~$1$ moves to $v_2$ and Cop~$2$ chooses $v_i$ as its position. By Claim~\ref{lem:K4FreeWithButterfly-cl2}, every vertex of $N(P) \cap N(r)$ is adjacent to $v_2$ or $v_i$. So if the robber moves to $N(P)$, then it gets captured and hence the robber should stay in $X$. By Observation~\ref{i9}(b), without loss of generality, we may assume that the robber stays at $r$. In the next turn, Cop~$1$ and Cop~$2$ move to $b$ and $u$, respectively. To avoid immediate capture, the robber must move to a vertex of $X\cup N(P)$. By Observation~\ref{i9}(b), $u$ is adjacent to every vertex of $N[r]\cap X$. So if the robber stays in $X$, then it gets captured by Cop~$2$. Hence the robber must move to a vertex of $N(P)$. Now by Claim~\ref{lem:K4FreeWithButterfly-cl2}, $b$ is adjacent to every vertex of $N(X)\cap N(P)$. So the robber gets captured by the cop at $b$. Therefore, $cop(G)\leq 2$. This completes the proof of Lemma~\ref{lem:K4FreeWithButterfly}.
 \end{proof}

In the following lemma, we prove that the robber gets captured in a connected $(P_5,K_4,$ co-banner, butterfly$)$-free graph if at the end of a round, the two cops and the robber are at some specific vertices of the graph.

\begin{lemma}\label{lem:K4FreePositionOfCop}
	Suppose that $G$ is a connected $(P_5,K_4$, co-banner, butterfly$)$-free graph and $P$ induces a paw in $G$. If at the end of a round in the game of cops and robber, two cops are at two distinct degree~$2$ vertices of the graph $G[P]$ and the robber is at a vertex at distance~$2$ from $P$, then the robber gets captured.
\end{lemma}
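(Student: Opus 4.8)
The plan is to read off the geometry first. In the paw $P=\{v_1,v_2,v_3,v_4\}$ with edges $\{v_1v_2,v_2v_3,v_3v_4,v_2v_4\}$, the two degree-$2$ vertices of $G[P]$ are exactly $v_3$ and $v_4$, and by Observation~\ref{i9}(c) a vertex at distance $2$ from $P$ is precisely a vertex of $X$. So I assume the cops sit at $v_3,v_4$ and the robber at some $x\in X$, and I let $H$ be the component of $G[X]$ containing $x$. I first record the structural facts I lean on: $T_1=D=\emptyset$ (Observation~\ref{i8}); $A_1=\emptyset$ and $B_{12}=\emptyset$, since an $A_1$-vertex together with $P$ induces a co-banner and a $B_{12}$-vertex together with $P$ induces a butterfly; and, refining Observation~\ref{i9}(a), $N(X)\cap N(P)\subseteq B_{23}\cup B_{24}\cup T_2\cup T_3\cup T_4$, because any $A_2$-vertex adjacent to a vertex of $X$ would complete a co-banner with $v_2,v_3,v_4$. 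The decisive consequence is that every vertex of $N(X)\cap N(P)$ is adjacent to $v_3$ or to $v_4$, so the only part of $N(P)$ not dominated by $\{v_3,v_4\}$ is $A_2$, and $A_2$ has no neighbour in $X$.

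I would then split on $N(x)\cap T_2$. If $N(x)\cap T_2=\emptyset$, then $N(x)\cap N(P)\subseteq B_{23}\cup B_{24}\cup T_3\cup T_4$, and every vertex of this set is adjacent to $v_2$. Picking any $y\in N(x)\cap N(P)$ and moving the cops to $\{v_2,y\}$ (legal, since $y$ is adjacent to $v_3$ or to $v_4$, so one cop reaches $y$ and the other reaches $v_2$) dominates $H$ through $y$ by Observation~\ref{i9}(b) and dominates all of $N(x)\cap N(P)$ through $v_2$; the robber then has no legal move and is caught.

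The substantial case is $N(x)\cap T_2\neq\emptyset$. Here I choose $u\in N(x)\cap T_2$, which is complete to $H$ and adjacent to $v_1,v_3,v_4$, move the cop from $v_3$ to $u$, and keep the other cop at $v_4$. Since $u$ dominates $H$, the robber must flee to $r\in N(x)\cap N(P)$ with $r\notin N[u]\cup N[v_4]$; as $r$ is not adjacent to $v_4$, necessarily $r\in B_{23}\cup T_4$, so $r$ is adjacent to $v_2,v_3$ but not to $u$. I then move the $v_4$-cop to $v_3$ while holding $u$, and argue the robber is trapped. The key structural claim is that if $u\in T_2$ and $r\in B_{23}\cup T_4$ are non-adjacent and both complete to $H$, then every component of $G[X]$ complete to $r$ is complete to $u$: otherwise, taking $h'$ in a component complete to $r$ but not to $u$ and $x\in H$, the set $\{v_1,u,x,r,h'\}$ (if $r\in B_{23}$) or $\{v_4,u,x,r,h'\}$ (if $r\in T_4$) induces a $P_5$. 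This rules out every escape of the robber into $X$; the paw-neighbours of $r$ are dominated by $u$ (which covers $v_1$) and by $v_3$ (which covers $v_2,v_3$); and it remains to verify that each neighbour of $r$ in $N(P)$ that is not adjacent to $v_3$ — hence lies in $A_2\cup A_4\cup B_{24}\cup T_3$ — is adjacent to $u$, each following from a short forbidden-subgraph computation (for instance, for $r'\in B_{24}$ adjacent to $r\in B_{23}$, the set $\{r',r,v_3,u,v_1\}$ would induce a $P_5$).

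The main obstacle is exactly this last verification: ruling out, type by type, that the robber slips from the hub $r$ to an $N(P)$-vertex $r'$ dominated by neither $u$ nor $v_3$. This is where $(P_5,K_4,\text{co-banner},\text{butterfly})$-freeness must be used fully, and a few subcases (notably $r'\in T_3$ adjacent to $r\in B_{23}$, where the natural prepended paths $v_1,v_4$ create chords) will not die to a single induced $P_5$ and will instead need either a combined $K_4$/butterfly argument or one further cop move. The resolving principle throughout is that every hub touching $X$ is adjacent to $v_3$ or $v_4$ while $A_2$ never touches $X$, so a cop shuttling on $\{v_2,v_3,v_4\}$ together with one advancing cop blocks every route by which the robber could reach a fresh component of $G[X]$, forcing the chase to terminate in capture.
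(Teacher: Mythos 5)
Your setup and your first case are sound, and in fact your split on $N(x)\cap T_2$ is cleaner than the paper's organization: the facts $A_1=B_{12}=\emptyset$, the refinement $N(X)\cap N(P)\subseteq B_{23}\cup B_{24}\cup T_2\cup T_3\cup T_4$ (this is exactly the paper's Claim~\ref{i10}), and the capture when $N(x)\cap T_2=\emptyset$ by moving the cops to $v_2$ and to some $y\in N(x)\cap N(P)$ are all correct. The gap is in the case $N(x)\cap T_2\neq\emptyset$, and it is larger than your closing hedge suggests. First, your enumeration of escape vertices omits $B_{14}$: a neighbour $r'$ of $r$ avoiding $v_3$ may lie in $A_2\cup A_4\cup B_{14}\cup B_{24}\cup T_3$. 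Second, and fatally, the statement you say ``remains to verify'' --- that every such $r'$ is adjacent to $u$ --- is simply false, so no forbidden-subgraph computation, single or combined, can establish it. Concretely, take $r\in T_4$ and $r'\in A_4$ with $r'r\in E(G)$ and $r'u\notin E(G)$: the $8$-vertex graph on $\{v_1,v_2,v_3,v_4,u,x,r,r'\}$ whose edges are the paw edges together with $uv_1,uv_3,uv_4,ux,rv_1,rv_2,rv_3,rx,r'v_4,r'r$ contains no induced $P_5$, $K_4$, co-banner, or butterfly (its only triangles are $v_1v_2r$, $v_2v_3r$, $v_2v_3v_4$, $v_3v_4u$, and one checks none of them extends to a forbidden configuration). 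In this graph your strategy plays out exactly as you prescribe --- cops to $\{u,v_4\}$, robber to $r$, cops to $\{u,v_3\}$ --- and then the robber legally moves to $r'$, which is adjacent to neither cop; your proposal has no continuation. A similar realizable configuration exists with $r\in B_{23}$ and $r'\in B_{14}$.

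This escape into $N(P)$ at a vertex dominated by neither cop is precisely where the lemma's difficulty lives, and it requires a genuine continuing strategy rather than one more verification or one more move. That is what the paper's proof spends almost all of its effort on: it conditions the cops' play on global emptiness information (which of $T_3$, $T_4$, $B_{13}$, $B_{14}$, $N(r)\cap B_{23}$, $N(r)\cap B_{24}$ are empty) across five separate claims, several of which track the robber through two or three further escape vertices before capture (see Claims~\ref{lem:K4FreePositionOfCop-cl1}--\ref{lem:K4FreePositionOfCop-cl5}). Your Case~1 could replace part of that analysis, but your Case~2 would have to be rebuilt from scratch along these lines; as written, the proof does not go through.
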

\begin{proof}
Let $P= \{v_1,v_2,v_3,v_4\}$ and the edge set of the paw induced by $P$ be $\{v_1v_2,v_2v_3,v_3v_4,v_4v_2\}$. Define the sets $A_i,B_{ij},T_i,D$, and $X$ around $P$ as defined in \ref{pawstruct1} for every $1 \leq i,j \leq 4$ and $i<j$. We may assume that Cop~$1$ is at $v_3$, Cop~$2$ is at $v_4$, and the robber is at $r\in X$ at the end of a round in the game of cops and robber. Note that it is now cops' turn to move. Again note that $A_1 = B_{12}=\emptyset$; otherwise $G[P\cup A_1]$ or $G[P\cup B_{12}]$ contains an induced co-banner or butterfly. To proceed further, we prove a series of claims.
	
	\begin{claim}\label{i10} 
	$N(X) \cap N(P) \subseteq B_{23}\cup B_{24}\cup T_2\cup T_3\cup T_4$.
	\end{claim}
	\begin{proof}[Proof of Claim~\ref{i10}]
	 Let $y$ be an arbitrary vertex of $N(X) \cap N(P)$ and $x$ be a neighbor of $y$ in $X$. Since $B_{12} = \emptyset$, by Observation~\ref{i9}(a), $y \in A_2\cup B_{23}\cup B_{24}\cup T_2\cup T_3\cup T_4$. Again since $\{v_3,v_4,v_2,y,x\}$ does not induce a co-banner, we have $y \notin A_2$. So $y\in B_{23}\cup B_{24}\cup T_2\cup T_3\cup T_4$. 
	\end{proof}

\begin{claim}\label{lem:K4FreePositionOfCop-cl1}
If $N(r)\cap B_{23}\neq \emptyset$, $T_3\neq \emptyset$, and $T_4\neq \emptyset$, then the robber gets captured.
\end{claim}
\begin{proof}[Proof of Claim~\ref{lem:K4FreePositionOfCop-cl1}]
Let $b \in N(r)\cap B_{23}$, $c\in T_3$, and $c'\in T_4$. In the next turn, Cop~$1$ and Cop~$2$ move to $b$ and $v_3$, respectively. Since $b$ is adjacent to $r\in X$, by Observation~\ref{i9}(b), $b$ is adjacent to every vertex of $N[r] \cap X$. So if the robber stays in $X$, then it gets captured by Cop~$1$. So to avoid immediate capture, the robber must move to a vertex $r'\in N(r)\cap N(P)$ that is not adjacent to $b$ and $v_3$. Since $r'\in N(X)\cap N(P)$ and $r'$ is not adjacent to $v_3$, by Claim~\ref{i10}, $r' \in  B_{24}\cup T_3$. Suppose that $r' \in B_{24}$. Since $\{b,v_3,v_2,c'\}$ does not induce a $K_4$, $b$ is not adjacent to $c'$. Again since $\{r',v_4,v_2,c\}$ and $\{c,v_1,v_2,c'\}$ do not induce any $K_4$, $c$ is not adjacent to $r'$ and $c'$. Note that $r'$ is adjacent to $c'$; otherwise $\{r',v_4,v_3,c',v_1\}$ induces a $P_5$. Similarly, $b$ is adjacent to $c$; otherwise $\{b,v_3,v_4,c,v_1\}$ induces a $P_5$. Then $\{b,c,v_1,c',r'\}$ induces a $P_5$ which is a contradiction to our assumption that $r' \in B_{24}$. So we have $r'\in T_3$. Now since $r'$ is not adjacent to $b$, $\{b,v_3,v_4,r',v_1\}$ induces a $P_5$, a contradiction. So such a vertex $r'$ does not exist. Hence the robber cannot escape from $r$ and gets captured.
\end{proof}
	
	\begin{claim}\label{lem:K4FreePositionOfCop-cl2}
If $T_4=\emptyset$ and $N(r)\cap T_3\neq \emptyset$, then the robber gets captured.
\end{claim}
\begin{proof}[Proof of Claim~\ref{lem:K4FreePositionOfCop-cl2}]
	 Let $T_4=\emptyset$ and $c\in N(r)\cap T_3$. In the next turn, Cop~$1$ and Cop~$2$ move to $v_4$ and $c$, respectively. Since $r\in X$ and $c$ is adjacent to $r$, by Observation~\ref{i9}(b), $c$ is adjacent to every vertex of $N[r]\cap X$. So if the robber stays in $X$, then it gets captured. So we may assume that the robber moves to a vertex $r' \in N(r) \cap N(P)$ that is not adjacent to $v_4$ and $c$. Now since $T_4 = \emptyset$ and $r'$ is not adjacent to $v_4$, by Claim~\ref{i10}, $r'\in B_{23}$. Then $\{v_1,c,v_4,v_3,r'\}$ induces a $P_5$, a contradiction. So such a vertex $r'$ does not exist. Hence the robber cannot escape from $r$ and gets captured. 
\end{proof}

\begin{claim}\label{lem:K4FreePositionOfCop-cl3}
If $N(r)\cap (T_3\cup T_4)=\emptyset$, $N(r)\cap B_{23}\neq \emptyset$, $N(r)\cap B_{24}\neq\emptyset$, and $B_{13}\neq\emptyset$ or $B_{14}\neq\emptyset$, then the robber gets captured.
\end{claim}
\begin{proof}[Proof of Claim~\ref{lem:K4FreePositionOfCop-cl3}]
	Let $N(r)\cap (T_3\cup T_4)=\emptyset$ and $b \in N(r)\cap B_{23}$. First assume that $B_{13}\neq \emptyset$. Let $b_1\in B_{13}$. In the next turn, Cop~$1$ stays at $v_3$ and Cop~$2$ moves to $v_2$. Note that by Claim~\ref{i10}, every vertex in $N(r) \cap N(P)$ is adjacent to $v_2$ or $v_3$. So if the robber moves to a vertex of $N(P)$, then it gets captured. Hence the robber should stay in $X$. By Observation~\ref{i9}(b), without loss of generality, we may assume that the robber stays at $r$. In the next turn, Cop~$1$ and Cop~$2$ move to $b$ and $b_1$, respectively. Since $b$ is adjacent to $r\in X$, by Observation~\ref{i9}(b), $b$ is adjacent to every vertex of $N[r]\cap X$. So if the robber stays in $X$, then its gets captured by Cop~$1$. To avoid immediate capture, robber must move to a vertex $r'\in N(r)\cap N(P)$ that is not adjacent to $b$ and $b_1$. Since $r'\in N(X)\cap N(P)$, by Claim~\ref{i10}, $r' \in B_{23}\cup B_{24}\cup T_2\cup T_3\cup T_4$. Since $b_1\in B_{13}$ and $r\in X$, by Claim~\ref{i10}, $b_1$ is not adjacent to $r$. Now since $\{b_1,v_1,v_2,r',r\}$ does not induce a $P_5$, we have $r' \notin B_{23}\cup B_{24}$. So $r' \in T_2\cup T_3\cup T_4$. Again since $N(r)\cap (T_3\cup T_4)=\emptyset$, we have $r' \in T_2$. Note that $b$ is adjacent to $b_1$; otherwise $\{b_1,v_1,v_2,b,r\}$ induces a $P_5$. Then $\{b_1,b,r,r',v_4\}$ induces a $P_5$ which is a contradiction. So such a vertex $r'$ does not exist. Hence the robber cannot escape from $r$ and gets captured. Now if $B_{14}\neq \emptyset$, then, since $N(r)\cap B_{24}\neq \emptyset$, we may conclude that the robber gets captured (due to symmetry). 
\end{proof}
	
\begin{claim}\label{lem:K4FreePositionOfCop-cl4}
If $T_4=N(r)\cap T_3=B_{14}=\emptyset$ and $N(r)\cap B_{23}\neq \emptyset$, then the robber gets captured.
\end{claim}
\begin{proof}[Proof of Claim~\ref{lem:K4FreePositionOfCop-cl4}]
Let $T_4=N(r)\cap T_3=B_{14}=\emptyset$ and $b \in N(r)\cap B_{23}$. First assume that $r$ has no neighbor in $T_2$. In the next turn, Cop~$1$ and Cop~$2$ move to $b$ and $v_2$, respectively. Since $b$ is adjacent to $r\in X$, by Observation~\ref{i9}(b), $b$ is adjacent to every vertex of $N[r]\cap X$. So if the robber stays in $X$, then it gets captured by Cop~$1$. So the robber must move to a vertex of $N(r) \cap N(P)$. Now since $r\in X$ and $r$ has no neighbor in $T_2$, by Claim~\ref{i10}, the robber must move to a vertex of $B_{23}\cup B_{24}\cup T_3\cup T_4$. Then the cop at $v_2$ captures the robber. 
	
	So we may assume that $r$ has a neighbor in $T_2$, say $c_1$. In the next turn, Cop~$1$ moves to $c_1$ and Cop~$2$ stays at $v_4$. Since $c_1$ is adjacent to $r\in X$, by Observation~\ref{i9}(b), $c_1$ is adjacent to every vertex of $N[r]\cap X$. So if the robber stays in $X$, then it gets captured by Cop~$1$. So to avoid immediate capture, the robber must move to vertex $r'\in N(r)\cap N(P)$ that is not adjacent to $c_1$ and $v_4$. Since $T_4=\emptyset$ and $r'$ is not adjacent to $v_4$, by Claim~\ref{i10}, we have $r'\in B_{23}$. In the next turn, Cop~$1$ stays at $c_1$ and Cop~$2$ moves to $v_3$. Again to avoid immediate capture, the robber must move to a vertex $r''$ that is not adjacent to $v_3$ and $c_1$. Since $A_1=B_{12}=B_{14}=\emptyset$ and $r''$ is not adjacent to $v_3$, we have $r''\in A_2\cup A_4\cup B_{24}\cup T_3\cup X$. If $r''\in X$, then $\{r'',r',v_2,v_4,c_1\}$ induces a $P_5$, a contradiction. Thus $r''\notin X$. Since $\{r'',v_2,r',v_1,c_1\}$ does not induce a co-banner, we have $r'' \notin A_2\cup B_{24}$. If $r''\in T_3$, then by our assumption in the claim that $N(r)\cap T_3=\emptyset$, we conclude that $r$ is not adjacent to $r''$. Now $\{r'',v_2,v_4,c_1,r\}$ induces a co-banner, a contradiction. So $r''\notin T_3$. Hence $r''\in A_4$. Since $r\in X$ and $r''\in A_4$, by Claim~\ref{i10}, $r''$ is not adjacent to $r$. Then $\{r'',r',r,c_1,v_1\}$ induces a $P_5$ which is a contradiction. So such a vertex $r''$ does not exist. Hence the robber cannot escape from $r'$ and gets captured.
\end{proof}

\begin{claim}\label{lem:K4FreePositionOfCop-cl5}
If $N(r)\cap B_{23}=N(r)\cap B_{24}=\emptyset$, then the robber gets captured.
\end{claim}
\begin{proof}[Proof of Claim~\ref{lem:K4FreePositionOfCop-cl5}]
Suppose that $N(r)\cap B_{23}=N(r)\cap B_{24}=\emptyset$. So by Claim~\ref{i10}, $N(r) \cap N(P) \subseteq T_2\cup T_3\cup T_4$. In the next turn, Cop~$1$ moves to $v_2$ and Cop~$2$ stays at $v_4$. Note that every vertex of $N(r) \cap N(P)$ is adjacent to $v_2$ or $v_4$. So to avoid capture, the robber should stay in $X$. By Observation~\ref{i9}(b), without loss of generality, we may assume that the robber stays at $r$. In the next turn, Cop~$1$ and Cop~$2$ move to $v_1$ and $v_2$, respectively. Since $N(r) \cap N(P)\subseteq T_2\cup T_3\cup T_4$, every vertex of $N(r) \cap N(P)$ is adjacent to $v_1$. So if the robber moves to a vertex of $N(P)$, then it gets captured by Cop~$1$ and hence the robber should stay in $X$. By Observation~\ref{i9}(b), without loss of generality, we may assume that the robber stays at $r$. Since $r$ is at distance $2$ from $P$, $N(r)\cap N(P)\neq \emptyset$. Let $x\in N(r)\cap N(P)$. In the next turn, Cop~$1$ and Cop~$2$ move to $x$ and $v_1$, respectively. Since $x\in N(X)\cap N(P)$, by Observation~\ref{i9}(b), $x$ is adjacent to every vertex of $N[r]\cap X$. So if the robber stays in $X$, then Cop~$1$ captures it and hence the robber should move to a vertex of $N(r)\cap N(P)$. Then the robber gets captured by the cop at $v_1$ since every vertex of $N(r)\cap N(P)$ is adjacent to $v_1$.
\end{proof}

	We now return to the proof of Lemma~\ref{lem:K4FreePositionOfCop}. First assume that $N(r)\cap B_{23}\neq \emptyset$ and $N(r)\cap B_{24}\neq \emptyset$. If $T_3\neq\emptyset$ and $T_4\neq\emptyset$, then by Claim~\ref{lem:K4FreePositionOfCop-cl1}, the robber gets captured. So we may assume that $T_3=\emptyset$ or $T_4=\emptyset$. Due to symmetry, we may assume that $T_4=\emptyset$. Now if $N(r)\cap T_3\neq \emptyset$, then by Claim~\ref{lem:K4FreePositionOfCop-cl2}, the robber gets captured. So we may further assume that $N(r)\cap T_3=\emptyset$. Now if $B_{13}\neq\emptyset$ or $B_{14}\neq\emptyset$, then by Claim~\ref{lem:K4FreePositionOfCop-cl3}, the robber gets captured. So we may assume that $B_{13}=B_{14}=\emptyset$. Now by Claim~\ref{lem:K4FreePositionOfCop-cl4}, the robber gets captured. This implies that the robber gets captured if $N(r)\cap B_{23}\neq \emptyset$ and $N(r)\cap B_{24}\neq \emptyset$. So we may assume that $N(r)\cap B_{23}=\emptyset$ or $N(r)\cap B_{24}=\emptyset$. If $N(r)\cap B_{23}=\emptyset$ and $N(r)\cap B_{24}=\emptyset$, then by Claim~\ref{lem:K4FreePositionOfCop-cl5}, the robber gets captured. So we may assume that exactly one of the sets $N(r)\cap B_{23}$ and $N(r)\cap B_{24}$ is empty. Due to symmetry, we may further assume that $N(r)\cap B_{23}=\emptyset$ and $N(r)\cap B_{24}\neq\emptyset$. Let $b\in N(r)\cap B_{24}$. Recall that Cop~$1$, Cop~$2$, and the robber are at $v_3,v_4,$ and $r\in X$, respectively. In the next turn, Cop~$1$ and Cop~$2$ move to $v_4$ and $b$, respectively. Since $b$ is adjacent to $r\in X$, by Observation~\ref{i9}(b), $b$ is adjacent to every vertex in $N[r] \cap X$. So if the robber stays in $X$, then it gets captured by Cop~$2$. To avoid immediate capture, the robber must move to a vertex $r'\in N(r)\cap N(P)$ that is not adjacent to $b$ and $v_4$. Since $N(r)\cap B_{23}=\emptyset$, $r'\notin B_{23}$. Again since $r'$ is adjacent to $r\in X$ and is not adjacent to $v_4$, by Claim~\ref{i10}, $r'\in T_4$. Then $\{b,v_4,v_3,r',v_1\}$ induces a $P_5$ which is a contradiction. So such a vertex $r'$ does not exist. Hence the robber cannot escape from $r$ and gets captured. This completes the proof of Lemma~\ref{lem:K4FreePositionOfCop}.
\end{proof}

In the following lemma, we show that if $G$ is a connected $(P_5,K_4$, co-banner, butterfly$)$-free graph that has an induced kite and the two cops and the robber are at some specific vertices of $G $ at the end of a round, then the robber gets captured.

\begin{lemma}\label{lem:K4FreeWithkite}
	Suppose that $G$ is a connected $(P_5,K_4$, co-banner, butterfly$)$-free graph that has an induced kite. Let $P$ induce a paw in $G$ that is contained in an induced kite of $G$. If at the end of a round in the game of cops and robber, two cops are at two distinct degree~$2$ vertices of the graph $G[P]$ and the robber is at the degree~$1$ vertex of $G[P]$, then the robber gets captured.
\end{lemma}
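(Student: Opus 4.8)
The plan is to force the robber off the pendant vertex and then run it down inside $N(P)$, using the extra vertex supplied by the kite. Fix $P=\{v_1,v_2,v_3,v_4\}$ with edge set $\{v_1v_2,v_2v_3,v_3v_4,v_2v_4\}$, so that $v_1$ is the degree-$1$ vertex, $v_2$ the degree-$3$ vertex, and $v_3,v_4$ the two degree-$2$ vertices; thus the cops sit on $v_3,v_4$, the robber on $v_1$, and it is the cops' turn. Introduce the partition \ref{pawstruct1} around $P$. As in Lemma~\ref{lem:K4FreePositionOfCop} we have $A_1=B_{12}=\emptyset$ (otherwise $G[P\cup A_1]$ or $G[P\cup B_{12}]$ contains an induced co-banner or butterfly), and $T_1=D=\emptyset$ by Observation~\ref{i8}. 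The only induced paws inside a kite consist of the kite's pendant, its apex, and two triangle vertices, so the fifth vertex $w$ of the kite is adjacent to $v_2$ and to exactly one of $v_3,v_4$, and to neither $v_1$ nor the remaining triangle vertex. After interchanging $v_3$ and $v_4$ if necessary (which merely swaps the two cops), I may assume $w\in B_{23}$ with $wv_1,wv_4\notin E(G)$.

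Two structural facts drive the argument. First, since $A_1=B_{12}=T_1=D=\emptyset$ and $v_1$ has no neighbour in $X$, we get $N(v_1)=\{v_2\}\cup B_{13}\cup B_{14}\cup T_2\cup T_3\cup T_4$. Second, a short $P_5$-argument on $\{w,v_3,v_4,b,v_1\}$ for $b\in B_{14}$ shows that $w$ is adjacent to every vertex of $B_{14}$; and by Observation~\ref{i9}(a) none of $A_3,A_4,B_{13},B_{14}$ meets $N(X)$, so a robber sitting in any of these sets has no neighbour in $X$ and is permanently confined to $N(P)$.

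The capture begins with the move in which \textbf{Cop~$1$ stays at $v_3$ and Cop~$2$ moves to $v_2$}. Checking against the expression for $N(v_1)$, the set $N[v_2]\cup N[v_3]$ covers every vertex of $N(v_1)$ except those of $B_{14}$ (here $v_2$ dominates $v_2,T_3,T_4$ and $v_3$ dominates $B_{13},T_2,T_4$), so the robber is captured unless it flees to some $b\in B_{14}$. From then on I keep Cop~$2$ parked at $v_2$; this permanently guards $v_1$ together with all of $N[v_2]$, so the robber can never profitably move back toward $v_1$, and I use Cop~$1$ to chase inside $N(P)$. Moving Cop~$1$ to $v_4$ threatens the robber on $B_{14}$ and, since $N[v_2]\cup N[v_4]$ misses only $A_3\cup B_{13}$ in $N(P)$, forces it to a neighbour in $A_3\cup B_{13}$; moving Cop~$1$ back to $v_3$ then forces it to a neighbour in $A_4\cup B_{14}$, since $N[v_2]\cup N[v_3]$ misses only $A_4\cup B_{14}$. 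The kite vertex $w$ (adjacent to $v_2,v_3$ and to all of $B_{14}$) is the tool I intend to use to break the symmetry between these two ``sides'', since replacing the cop on $v_4$ by a cop on $w$ removes all of $B_{14}$ as a hiding place while keeping the cop on a $v_3$-neighbour.

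The main obstacle is exactly this shuttling between $A_3\cup B_{13}$ (adjacent to $v_3$) and $A_4\cup B_{14}$ (adjacent to $v_4$). Both sides lie outside $N(X)$, so Lemma~\ref{lem:K4FreePositionOfCop} cannot be invoked and the robber must be caught entirely within $N(P)$; worse, the natural five-vertex configuration arising when the robber jumps across, namely $\{v_2,v_3,r_1,r_2,v_4\}$ for consecutive positions $r_1\in A_3\cup B_{13}$ and $r_2\in A_4\cup B_{14}$, is an induced \emph{house} ($\overline{P_5}$), which is not forbidden here. I therefore expect the heart of the proof to be a sequence of claims that, for each type of candidate escape vertex in $A_3,A_4,B_{13},B_{14}$, use $w$ together with the excluded subgraphs $P_5$, $K_4$, co-banner and butterfly to show the escape vertex cannot exist, so that after Cop~$1$ alternates suitably among $v_3$, $v_4$ and $w$ the robber's escape set becomes empty and it is captured. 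Pinning down precisely how $w$'s adjacencies to $B_{13}$, $A_3$ and $A_4$ are constrained by these forbidden subgraphs is the step I expect to be the real work.
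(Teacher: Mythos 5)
There is a genuine gap, and it starts at your very first structural step: you have misidentified the kite. In this paper the kite is the diamond with a pendant vertex attached to one of its degree-$2$ vertices, so relative to the paw $P=\{v_1,v_2,v_3,v_4\}$ the fifth vertex of the kite is adjacent to \emph{both} $v_3$ and $v_4$ and to neither $v_1$ nor $v_2$; that is, the hypothesis hands you a vertex $b\in B_{34}$, not a vertex $w\in B_{23}$. This is exactly how the paper uses the hypothesis (its proof begins: since the paw induced by $P$ is contained in an induced kite, $B_{34}\neq\emptyset$), and it is forced by the way Lemma~\ref{lem:K4FreeWithkite} is later invoked in Lemma~\ref{lem:K4FreeWithoutkite}: there the kite produced is $\{t,v_2,v_4,v_3,r_1\}$ with $t\in T_3$ nonadjacent to $r_1\in B_{13}$, i.e., a diamond on $\{t,v_2,v_4,v_3\}$ plus the pendant $r_1$ attached at the degree-$2$ vertex $v_3$. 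Your vertex $w\in B_{23}$ with $wv_1,wv_4\notin E(G)$ is simply not supplied by the hypothesis and need not exist, so everything built on it --- the $P_5$-argument making $w$ complete to $B_{14}$, and the plan of stationing Cop~$1$ at $w$ to ``remove $B_{14}$ as a hiding place'' --- has no foundation.

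Second, even granting your reading, the proposal is a plan rather than a proof, and you say so yourself: the step you call ``the real work'' (ruling out the robber's indefinite shuttle between $A_3\cup B_{13}$ and $A_4\cup B_{14}$) is never carried out. That shuttle is precisely the difficulty the $B_{34}$ vertex $b$ is there to resolve. The paper's Claim~\ref{lem:K4FreeWithkite-cl1} shows that if $A_3\cup(B_{23}\setminus N(b))\neq\emptyset$, or symmetrically $A_4\cup(B_{24}\setminus N(b))\neq\emptyset$, or $[B_{34},A_2]\neq\emptyset$, then one can \emph{re-root}: the set $P'=\{a,v_3,b,v_4\}$ (or $P''=\{a,b',v_3,v_4\}$) again induces a paw, the cops already stand on its two degree-$2$ vertices, the robber at $v_1$ is at distance~$2$ from it, and Lemma~\ref{lem:K4FreePositionOfCop} finishes. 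In the residual case ($A_3=A_4=\emptyset$, $b$ complete to $B_{23}\cup B_{24}\cup B_{14}$, $[B_{34},A_2]=\emptyset$), the escape route $v_1\to B_{14}\to B_{24}$ is blocked because $\{b,v_4,r',r''\}$ would induce a $K_4$, and the final escape into $X$ creates a co-banner. No analogue of this re-rooting is available from a $B_{23}$ vertex, and you offer no substitute, so the obstruction you correctly identified remains unresolved in your argument.
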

\begin{proof}
Let $P=\{v_1,v_2,v_3,v_4\}$ and the edge set of the paw induced by $P$ be $\{v_1v_2,v_2v_3,v_3v_4,v_4v_2\}$. We may assume that Cop~$1$, Cop~$2$, and the robber are at $v_3, v_4$, and $v_1$, respectively at the end of a round in the game of cops and robber. Note that it is now cops' turn to move. Define the sets $A_i,B_{ij},T_i,D$, and $X$ around $P$ for every $1 \leq i,j \leq 4$ and $i<j$ as defined in \ref{pawstruct1}. Since the paw induced by $P$ is contained in an induced kite of $G$, we have $B_{34}\neq \emptyset$. Let $b \in B_{34}$. Note that $A_1=B_{12}=\emptyset$; otherwise for any $u\in A_1\cup B_{12}$, $P\cup \{u\}$ induces a co-banner or a butterfly in $G$.
 
\begin{claim}\label{lem:K4FreeWithkite-cl1}
If $A_3 \cup (B_{23} \setminus N(b))\neq\emptyset$ or $A_4 \cup (B_{24} \setminus N(b))\neq\emptyset$ or $[B_{34}, A_2]\neq\emptyset$, then the robber gets captured.
\end{claim}
\begin{proof}[Proof of Claim~\ref{lem:K4FreeWithkite-cl1}]
 Let $a\in A_3 \cup (B_{23} \setminus N(b))$. First we show that every neighbor of $v_1$ is adjacent to $v_4$ or $b$. For the sake of contradiction, let $x$ be a neighbor of $v_1$ such that $x$ is not adjacent to $v_4$ and $b$. Clearly, $x\neq v_2$ and hence $x$ is not a vertex of the kite induced by $\{v_1,v_2,v_3,v_4,b\}$. Then $\{b,v_4,v_2,v_1,x\}$ induces a $P_5$ or a co-banner, a contradiction. So every neighbor of $v_1$ is adjacent to $v_4$ or $b$. In the next turn, Cop~$1$ moves to $b$ and Cop~$2$ stays at $v_4$. Since every neighbor of $v_1$ is adjacent to $v_4$ or $b$, to avoid immediate capture, the robber must stay at $v_1$. Now we show that $a$ and $b$ are not adjacent. If $a \in A_3$, then, since $\{a,b,v_4,v_2,v_1\}$ does not induce a $P_5$, $a$ and $b$ are not adjacent. Again if $a\in B_{23}\setminus N(b)$, then $a$ is not adjacent to $b$. Note that $P'=\{a,v_3,b,v_4\}$ induces a paw in $G$. At the end of this round, the cops are at $b$ and $v_4$ that are degree~$2$ vertices of $G[P']$ and the robber is at $v_1$ that is at distance~$2$ from $P'$. So by Lemma~\ref{lem:K4FreePositionOfCop}, the robber gets captured. Similarly, we can show that the robber gets captured if $A_4 \cup (B_{24} \setminus N(b))\neq \emptyset$. 
	
	Now suppose that $[B_{34}, A_2]\neq \emptyset$. Let $a\in A_2$ and $b'\in B_{34}$ such that $a$ is adjacent to $b'$. Note that $P''=\{a,b',v_3,v_4\}$ induces a paw. Again note that the cops are at $v_3$ and $v_4$ that are degree~$2$ vertices of $G[P'']$ and the robber is at $v_1$ that is at distance~$2$ from $P''$. So by Lemma~\ref{lem:K4FreePositionOfCop}, the robber gets captured.
\end{proof}	
	 
	We now return to the proof of Lemma~\ref{lem:K4FreeWithkite}. If $A_3 \cup (B_{23} \setminus N(b))\neq \emptyset$ or $A_4 \cup (B_{24} \setminus N(b))\neq\emptyset$, then by Claim~\ref{lem:K4FreeWithkite-cl1}, the robber gets captured. So we may assume that $A_3=A_4 = \emptyset$ and $b$ is adjacent to every vertex of $B_{23} \cup B_{24}$. If $[B_{34}, A_2]\neq \emptyset$, then by Claim~\ref{lem:K4FreeWithkite-cl1}, the robber gets captured. So we may assume that $[B_{34}, A_2]=\emptyset$. In the next turn, Cop~$1$ stays at $v_3$ and Cop~$2$ moves to $v_2$. To avoid immediate capture, the robber must move to a vertex $r'$ that is not adjacent to $v_3$ and $v_2$. Since $A_1=\emptyset$, we have $r'\in B_{14}$. Note that $b$ is adjacent to every vertex of $B_{14}$; otherwise for any non-neighbor $y\in B_{14}$ of $b$, $\{b,v_3,v_2,v_1,y\}$ induces a $P_5$. In particular, $b$ is adjacent to $r'$. In the next turn, Cop~$1$ stays at $v_3$ and Cop~$2$ moves to $v_1$. To avoid immediate capture, the robber must move to a vertex $r''$ that is not adjacent to $v_3$ and $v_1$. Since $r'\in B_{14}$, by Observation~\ref{i9}(a), $r''\notin X$. Since $A_4 = \emptyset$, we have $r''\in A_2 \cup B_{24}$. If $r''\in B_{24}$, then, since $b$ is adjacent to every vertex of $B_{14}\cup B_{24}$, $\{b,v_4,r',r''\}$ induces a $K_4$ in $G$ which is a contradiction. So $r''\notin B_{24}$ and hence $r'' \in A_2$. In the next turn, Cop~$1$ moves to $v_2$ and Cop~$2$ stays at $v_1$. To avoid immediate capture, the robber must move to a vertex $r_1$ that is not adjacent to $v_1$ and $v_2$. Since $A_3 = A_4=\emptyset$ and $[B_{34},A_2]=\emptyset$, we have $r_1 \in X$. Then $\{v_3,v_4,v_2,r'',r_1\}$ induces a co-banner which is a contradiction. So such a vertex $r_1$ does not exist. Hence the robber cannot escape from $r''$ and gets captured. This completes the proof of Lemma~\ref{lem:K4FreeWithkite}.
\end{proof}

We now show that the cop number of any connected $(P_5,K_4$, co-banner, butterfly$)$-free graph is at most~$2$.

\begin{lemma}\label{lem:K4FreeWithoutkite}
	Let $G$ be a connected $(P_5,K_4$, co-banner, butterfly$)$-free graph. Then $cop(G) \leq 2$.
\end{lemma}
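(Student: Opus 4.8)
The plan is to dispose of the trivial case first: if $G$ is paw-free then Lemma~\ref{pawlemma} already gives $cop(G)\le 2$, so I may assume $G$ contains an induced paw $P=\{v_1,v_2,v_3,v_4\}$ with triangle $v_2v_3v_4$ and pendant edge $v_1v_2$, and set up the partition around $P$ from~\ref{pawstruct1}. The structural engine is that three of the blocks are empty: for any $a\in A_1$ the set $P\cup\{a\}$ induces a co-banner, and for any $b\in B_{12}$ the set $P\cup\{b\}$ induces a butterfly, so co-banner- and butterfly-freeness force $A_1=B_{12}=\emptyset$; Observation~\ref{i8} gives $T_1=D=\emptyset$ from $K_4$-freeness. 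These emptiness facts are exactly what make two cops suffice.

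The cop strategy is to place Cop~$1$ and Cop~$2$ on the two degree-$2$ vertices $v_3,v_4$ of $G[P]$. After the robber picks a vertex $r$ it must avoid $N[v_3]\cup N[v_4]$; a vertex of $N(P)$ avoiding both $v_3$ and $v_4$ lies in $A_1\cup A_2\cup B_{12}$, and since $A_1=B_{12}=\emptyset$ the only possibilities are $r\in X$, $r\in A_2$, or $r=v_1$. If $r\in X$ then by Observation~\ref{i9}(c) $r$ is at distance $2$ from $P$, so the configuration meets the hypothesis of Lemma~\ref{lem:K4FreePositionOfCop} (end of a round, cops on two degree-$2$ vertices of a paw, robber at distance $2$) and the robber is captured.

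I would unify the two remaining cases $r=v_1$ and $r\in A_2$. For $r\in A_2$ the set $Q=\{r,v_2,v_3,v_4\}$ induces a paw with triangle $v_2v_3v_4$ and pendant $r$ attached to $v_2$; for $r=v_1$ the paw $Q=P$ itself works. In either case the cops occupy the two degree-$2$ vertices $v_3,v_4$ of $G[Q]$ and the robber sits on the degree-$1$ (pendant) vertex of $G[Q]$. If $Q$ is contained in an induced kite -- equivalently, if some vertex is adjacent to $v_3$ and $v_4$ but to neither the pendant nor $v_2$ -- then Lemma~\ref{lem:K4FreeWithkite} applies verbatim and the robber is captured.

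The crux, and the step I expect to be the main obstacle, is the leftover configuration: the robber on the pendant of a paw $Q$ that is \emph{not} contained in a kite, so no vertex is adjacent to both $v_3$ and $v_4$ while missing the pendant and $v_2$. Here no helper lemma applies and I must chase directly. The plan is to push a cop onto $v_2$: the robber, adjacent to $v_2$, must flee to a neighbor $p'$ avoiding $v_2$ and the triangle vertex still guarded, and co-banner-freeness forces $p'$ to attach to the remaining triangle vertex, since otherwise $Q\cup\{p'\}$ would induce a co-banner. Iterating such moves, the goal is to show the robber is either driven into $X$, where Lemma~\ref{lem:K4FreePositionOfCop} finishes the job, or cornered, using $P_5$-, $K_4$-, co-banner-, and butterfly-freeness to kill every escape route. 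The delicate point is that the absence of a kite on $Q$ is precisely the hypothesis that stops the robber from circulating indefinitely around the triangle; turning this into a finite capturing sequence -- most likely organized as a short series of claims mirroring those in the previous lemmas -- is where the genuine work lies.
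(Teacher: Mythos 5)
Your setup is exactly the paper's: the paw-free case via Lemma~\ref{pawlemma}, the partition~\ref{pawstruct1} with $A_1=B_{12}=\emptyset$ forced by co-banner- and butterfly-freeness, cops placed on $v_3,v_4$, the case $r\in X$ dispatched by Lemma~\ref{lem:K4FreePositionOfCop}, the unification of $r=v_1$ with $r\in A_2$ by passing to the paw $Q$, and the kite case dispatched by Lemma~\ref{lem:K4FreeWithkite}. All of that is correct and is precisely how the paper proceeds.

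However, the remaining case ($Q$ not contained in a kite, i.e.\ $B_{34}=\emptyset$, robber on the pendant) is left as a plan rather than a proof, and the plan as stated does not contain the idea that actually closes it. You correctly observe that moving a cop to $v_2$ (the other staying at $v_4$) forces the robber into $B_{13}$, since $A_1=\emptyset$; but ``iterating such moves'' to drive the robber into $X$ or corner it is not what resolves this case, and it is unclear it can be made to work, since a robber in $B_{13}$ is adjacent to both $v_1$ and $v_3$ and has no evident reason to be pushed toward $X$. The paper's resolution is different in kind: it first proves two preparatory claims --- (i) if $A_3=\emptyset$ or $A_4=\emptyset$, the robber is captured by a short direct chase ending in a $P_5$ contradiction, and (ii) if some $b^*\in B_{13}$ is adjacent to \emph{every} vertex of $T_3$, the robber is captured (this claim uses (i) to assume $A_3,A_4\neq\emptyset$) --- and may therefore assume that every vertex of $B_{13}$ has a non-neighbor in $T_3$. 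Then, when the robber flees to $r_1\in B_{13}$, a non-neighbor $t\in T_3$ of $r_1$ makes $\{t,v_2,v_4,v_3,r_1\}$ an induced kite containing the \emph{new} paw $\{r_1,v_3,v_2,v_4\}$, whose two degree-$2$ vertices $v_2,v_4$ are exactly where the cops stand and whose degree-$1$ vertex is where the robber stands; Lemma~\ref{lem:K4FreeWithkite} is then re-invoked with this different paw and finishes the proof. This pivot --- manufacturing a kite around a new paw centered at the robber's escape vertex, guaranteed by the two claims about $B_{13}$ and $T_3$, rather than chasing toward $X$ --- is the missing step; without it your argument does not close, and supplying it is essentially reproducing the half of the paper's proof that you have deferred.
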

\begin{proof}
If $G$ is paw-free, then by Lemma~\ref{pawlemma}, $cop(G)\leq 2$. So we may assume that $G$ contains an induced paw, say with vertex set $P=\{v_1,v_2,v_3,v_4\}$ and edge set $\{v_1v_2, v_2v_3, v_3v_4,v_2v_4\}$. Define sets $A_{i},B_{ij}, T_i, D,$ and $X$ around $P$ for every $1 \leq i,j \leq 4$ and $i<j$ as defined in~\ref{pawstruct1}. Since $G$ is $($co-banner, butterfly$)$-free, we have $A_1=B_{12}=\emptyset$. In the first turn, we place Cop~$1$ and Cop~$2$ at $v_3$ and $v_4$, respectively. To avoid immediate capture, the robber must choose a vertex $r$ that is not adjacent to $v_3$ and $v_4$. Since $A_1=B_{12}=\emptyset$ and $r$ is not adjacent to $v_3$ and $v_4$, we have $r\in \{v_1\} \cup A_2 \cup X$. If $r\in X$, then by Lemma~\ref{lem:K4FreePositionOfCop}, the robber gets captured. So we may assume that $r\in \{v_1\}\cup A_2$. Due to symmetry, we may further assume that $r=v_1$. Now if $B_{34}\neq \emptyset$, then by Lemma~\ref{lem:K4FreeWithkite}, the robber gets captured. So assume that $B_{34}=\emptyset$.

\begin{claim}\label{lem:K4FreeWithoutkite-cl0}
If $A_3=\emptyset$ or $A_4 =\emptyset$, then the robber gets captured. 
\end{claim}
\begin{proof}[Proof of Claim~\ref{lem:K4FreeWithoutkite-cl0}]
Due to symmetry, it is sufficient to show that the robber gets captured if $A_3=\emptyset$. Let $A_3=\emptyset$. In the next turn, Cop~$1$ stays at $v_3$ and Cop~$2$ moves to $v_2$. To avoid immediate capture, the robber should move to a vertex $r'$ that is not adjacent to $v_2$ and $v_3$. Since $A_1=\emptyset$, we have $r'\in B_{14}$. In the next turn, Cop~$1$ and Cop~$2$ move to $v_2$ and $v_1$, respectively. To avoid immediate capture, the robber must move to a vertex $r''$ that is not adjacent to $v_1$ and $v_2$. Since $r'\in B_{14}$, by Observation~\ref{i9}(a), $r''\notin X$. Again since $B_{34} = A_3 = \emptyset$, we have $r'' \in A_4$. Then $\{r'',r',v_1,v_2,v_3\}$ induces a $P_5$ which is a contradiction. So such a vertex $r''$ does not exist. Hence the robber cannot escape from $r'$ and gets captured. 
\end{proof}

\begin{claim}\label{lem:K4FreeWithoutkite-cl1}
If there exists a vertex in $B_{13}$ that is adjacent to every vertex of $T_3$, then the robber gets captured. 
\end{claim}
\begin{proof}[Proof of Claim~\ref{lem:K4FreeWithoutkite-cl1}]
Let there be a vertex $b^*\in B_{13}$ such that $b^*$ is adjacent to every vertex of $T_3$. If $A_3=\emptyset$ or $A_4=\emptyset$, then by Claim~\ref{lem:K4FreeWithoutkite-cl0}, the robber gets captured. So assume that $A_3\neq \emptyset$ and $A_4\neq \emptyset$. Let $a\in A_3$ and $a'\in A_4$. Recall that Cop~$1$, Cop~$2$, and the robber are at $v_3,v_4,$ and $v_1$, respectively. In the next turn, Cop~$1$ and Cop~$2$ move to $b^*$ and $v_3$, respectively. To avoid immediate capture, the robber must move to a vertex $r'$ that is not adjacent to $b^*$ and $v_3$. Since $A_1 = B_{12}= \emptyset$, we have $r'\in B_{14}\cup T_3$. Since $b^*$ is adjacent to every vertex of $T_3$, $r'\notin T_3$. Note that this includes the case $T_3=\emptyset$. So $r'\in B_{14}$. Now since $\{a',a,v_3,v_2,v_1\}$ does not induce a $P_5$, $a$ is not adjacent to $a'$. Note that $b^*$ is not adjacent to $a$; otherwise $\{a,b^*,v_1,v_2,v_4\}$ induces a $P_5$. Again, $b^*$ is adjacent to $a'$; otherwise $\{a',v_4,v_3,b^*,v_1\}$ induces a $P_5$. Similarly, we can show that $r'$ is adjacent to $a$ and not adjacent to $a'$. Now since $r'$ is not adjacent to $b^*$, $\{a',b^*,v_1,r',a\}$ induces a $P_5$, a contradiction. So such a vertex $r'$ does not exist. Hence the robber cannot escape from $v_1$ and gets captured.
\end{proof}
	
	We now return to the proof of Lemma~\ref{lem:K4FreeWithoutkite}. If  there exists a vertex in $B_{13}$ that is adjacent to every vertex of $T_3$, then by Claim~\ref{lem:K4FreeWithoutkite-cl1}, the robber gets captured. So we may assume that $B_{13}=\emptyset$ or every vertex of $B_{13}$ has a non-neighbor in $T_3$. Recall that Cop~$1$, Cop~$2$, and the robber are at $v_3,v_4$, and $v_1$, respectively. In the next turn, Cop~$1$ moves to $v_2$ and Cop~$2$ stays at $v_4$. To avoid immediate capture, the robber must move to a vertex that is not adjacent to $v_2$ and $v_4$. Hence the robber must move to a vertex in $A_1\cup B_{13}$. Recall that $A_1=\emptyset$. Now if $B_{13}=\emptyset$, then the robber cannot escape from $v_1$ and gets captured. So assume that $B_{13}\neq \emptyset$ and the robber moves to a vertex $r_1\in B_{13}$.  Due to our assumption, $r_1$ has a non-neighbor in $T_3$, say $t$. Note that $\{t,v_2,v_4,v_3,r_1\}$ induces a kite and the cops and the robber are at the positions such that the hypothesis of Lemma~\ref{lem:K4FreeWithkite} holds. So by Lemma~\ref{lem:K4FreeWithkite}, the robber gets captured implying that $cop(G)\leq 2$. 	
\end{proof}

By using Lemma~\ref{lem:K4FreeWithCobanner}-\ref{lem:K4FreeWithoutkite}, we now show that the cop number of any connected ($P_5,K_4$)-free graph is at most~$2$.
 
\begin{theorem}\label{p5k4}
	Let $G$ be a connected $(P_5,K_4)$-free graph. Then $cop(G)\leq 2$.
\end{theorem}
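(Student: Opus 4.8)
The plan is to reduce Theorem~\ref{p5k4} to the four structural lemmas already established, namely Lemma~\ref{pawlemma} and Lemmas~\ref{lem:K4FreeWithCobanner}--\ref{lem:K4FreeWithoutkite}, by a case analysis on which induced subgraphs the graph $G$ contains. First I would dispose of the trivial case: if $G$ is paw-free, then Lemma~\ref{pawlemma} immediately gives $cop(G)\leq 2$, so from now on I may assume $G$ contains an induced paw. The remaining cases are organized by successively forbidding the co-banner and the butterfly, exactly matching the hypotheses of the three $K_4$-free lemmas.

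The second step is to branch on the presence of an induced co-banner. If $G$ has an induced co-banner, then Lemma~\ref{lem:K4FreeWithCobanner} applies directly (it requires only that $G$ be connected, $(P_5,K_4)$-free, and contain an induced co-banner), yielding $cop(G)\leq 2$. So I may assume $G$ is co-banner-free. Next I branch on the butterfly: if the co-banner-free graph $G$ contains an induced butterfly, then Lemma~\ref{lem:K4FreeWithButterfly}, whose hypotheses are precisely that $G$ is connected, $(P_5,K_4,\text{co-banner})$-free, and has an induced butterfly, gives $cop(G)\leq 2$. Thus I am left with the case that $G$ is $(P_5,K_4,\text{co-banner},\text{butterfly})$-free, which is exactly the hypothesis of Lemma~\ref{lem:K4FreeWithoutkite}; invoking it finishes this case with $cop(G)\leq 2$.

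Putting these together, every connected $(P_5,K_4)$-free graph falls into one of the four mutually exhausting cases (paw-free; contains a co-banner; co-banner-free but contains a butterfly; $(\text{co-banner},\text{butterfly})$-free), and in each case one of the cited results bounds the cop number by $2$. Hence $cop(G)\leq 2$, completing the proof.

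Since all the real work has been pushed into the preceding lemmas, I do not expect any genuine obstacle here: the only thing to verify is that the four cases are exhaustive and that the hypotheses of each invoked lemma are met on the nose. The subtlety worth double-checking is the \emph{order} of the case split---one must forbid the co-banner before appealing to the butterfly lemma, and forbid both before appealing to Lemma~\ref{lem:K4FreeWithoutkite}, because those lemmas assume co-banner-freeness (and butterfly-freeness) as part of their hypotheses. No new combinatorial argument or cop strategy is needed; the proof is a short dispatch over these cases.
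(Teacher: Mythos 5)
Your proposal is correct and follows essentially the same route as the paper: branch on the presence of an induced co-banner, then a butterfly, and invoke Lemmas~\ref{lem:K4FreeWithCobanner}, \ref{lem:K4FreeWithButterfly}, and \ref{lem:K4FreeWithoutkite} in turn. The only (harmless) difference is that you dispatch the paw-free case separately via Lemma~\ref{pawlemma} at the outset, whereas the paper leaves that case to be handled inside the proof of Lemma~\ref{lem:K4FreeWithoutkite}.
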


\begin{proof}
If $G$ contains an induced co-banner, then by Lemma~\ref{lem:K4FreeWithCobanner}, $cop(G)\leq 2$. So we may assume that $G$ is co-banner-free. Now if $G$ contains an induced butterfly, then by Lemma~\ref{lem:K4FreeWithButterfly}, $cop(G)\leq 2$. So we may further assume that $G$ is butterfly-free. Now since $G$ is $(P_5,K_4,$ co-banner, butterfly$)$-free, by Lemma~\ref{lem:K4FreeWithoutkite}, $cop(G)\leq 2$.
	\end{proof}

\section{On the class of ($P_5,K_3 \cup K_1$)-free graphs}	\label{sec:k3Uk1} 
	Aigner and Fromme \cite{aigner} proved that for any natural number $k$, there exists a $C_3$-free graph with the cop number at least $k$. Therefore, the class of $K_3\cup K_1$-free graphs also has unbounded cop number. However, the cop number of a connected $K_3\cup K_1$-free graph that contains a $C_3$, is at most $3$ since every $C_3$ dominates the graph. In the following theorem, we show that if a connected $K_3\cup K_1$-free graph $G$ is also $P_5$-free, then $cop(G)\leq 2$.
	
	\begin{theorem}
		Let $G$ be a connected $(P_5, K_3 \cup K_1)$-free graph. Then $cop(G)\leq 2$.
	\end{theorem}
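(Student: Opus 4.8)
The plan is to exploit the $K_3 \cup K_1$-free structure to show that whenever $G$ contains a triangle, a small set of vertices dominates a large portion of $G$, and then combine this with the $P_5$-free property to corner the robber. The key observation is that $K_3 \cup K_1$-freeness means every triangle $K$ in $G$ is a dominating set: any vertex not in $N[K]$ together with $K$ would form an induced $K_3 \cup K_1$. So if $G$ contains a triangle, then placing cops cleverly relative to that triangle should quickly restrict the robber.

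\textbf{First} I would dispose of the triangle-free case. If $G$ is $C_3$-free, then $G$ is in particular $(P_5, C_3)$-free, so by Lemma~\ref{thmc4free} we already have $cop(G) \leq 2$. Hence we may assume $G$ contains an induced triangle $K = \{u_1, u_2, u_3\}$. As noted above, $K$ dominates $G$, i.e.\ $N[K] = V(G)$. This is the main structural leverage: the robber must always sit in $N[u_1] \cup N[u_2] \cup N[u_3]$.

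\textbf{Next}, the natural strategy is to place two cops on two vertices of the triangle, say Cop~$1$ at $u_1$ and Cop~$2$ at $u_2$. Since $K$ dominates $G$, the only vertices safe from immediate capture are those adjacent to $u_3$ but to neither $u_1$ nor $u_2$; call this set $R = N(u_3) \setminus (N[u_1] \cup N[u_2])$. If $R = \emptyset$ the robber is caught at once, so assume the robber sits at some $y \in R$. I would then analyze the adjacency structure of $R$ using $P_5$-freeness: for any two vertices $y, y' \in R$, the configuration $u_1\,(\text{or }u_2)$ through the triangle and out to $y, y'$ constrains how $R$ connects to the rest of the graph, and I expect to show that $N[R]$ is itself governed by the triangle and a few additional vertices. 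A promising refinement is to move one cop from $u_2$ onto $u_3$ (or onto a well-chosen neighbor of $y$), forcing the robber into an ever-smaller region; because every vertex is within distance~$1$ of the triangle, the robber has very little room to run, and the $P_5$-free condition rules out the long induced paths that would be needed to perpetually evade two cops.

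\textbf{The main obstacle} I anticipate is controlling the set $R = N(u_3)\setminus(N[u_1]\cup N[u_2])$ and its internal structure: $R$ need not be dominated by a single additional vertex, and the robber could in principle oscillate among the private neighbors of $u_3$. The crux will be a case analysis, analogous to the argument in Lemma~\ref{thmc4free}, showing that after at most two further cop moves—relocating the cops among $\{u_1, u_2, u_3\}$ and common neighbors of the robber's position—any escape vertex $r'$ would create an induced $P_5$ or an induced $K_3 \cup K_1$, both forbidden. I would carefully track which triangle vertex each cop occupies and use the fact that any vertex outside the closed neighborhoods of the two cops must attach to the triangle in a restricted way; pushing this through the finitely many adjacency patterns of $r$ and $r'$ relative to $\{u_1,u_2,u_3\}$ should close out the proof and yield $cop(G) \leq 2$.
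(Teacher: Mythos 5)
Your setup is sound and matches the paper's opening moves: the triangle-free case is indeed dispatched by Lemma~\ref{thmc4free}, and the observation that every triangle dominates a $K_3\cup K_1$-free graph is exactly what the paper uses (it is what forces $X=\emptyset$ in the partition \ref{pawstruct1}). But there is a genuine gap precisely at what you call ``the main obstacle'': the entire pursuit argument on the private-neighbor sets is missing, deferred with ``I expect to show'' and ``should close out the proof''. This is not a routine verification, and your triangle-only configuration does not by itself block the escape you worry about. Concretely, with cops at $u_1,u_3$ and the robber at $y\in N(u_3)\setminus(N[u_1]\cup N[u_2])$, a robber move to $r'\in N(y)\cap N(u_2)\setminus(N[u_1]\cup N[u_3])$ yields the set $\{u_1,u_2,u_3,y,r'\}$ inducing a house (a $C_5$ plus one chord), which is neither a $P_5$ nor a $K_3\cup K_1$; so nothing in your sketch prevents the robber from oscillating among the private neighbors of $u_1,u_2,u_3$ indefinitely, and ruling this out is the actual content of the theorem.

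The paper closes this gap by working with strictly more structure than a triangle. It first proves Lemma~\ref{pawlemma}: in a $(P_5,\text{paw})$-free graph every vertex of $N(K)$ has at least two neighbors on a triangle $K$ and no vertex is at distance two from $K$, so two triangle vertices dominate $G$ and the cops win at once. Hence one may assume $G$ contains an induced paw $P=\{v_1,v_2,v_3,v_4\}$, and the pendant vertex $v_1$ is exactly the extra anchor your bare triangle lacks. The proof then runs a multi-round chase using the partition \ref{pawstruct1} around $P$: the $K_3\cup K_1$-freeness kills $X$, $A_1$, and $B_{34}$, and the two cases $B_{12}\neq\emptyset$ and $B_{12}=\emptyset$ are settled by explicit sequences of cop moves in which every candidate escape vertex is eliminated by exhibiting an induced $P_5$ or $K_3\cup K_1$, with $v_1$ appearing in most of these forbidden configurations. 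To complete your argument you would need either to carry out an analogous (and nontrivial) case analysis from the triangle alone, or to pass to an induced paw as the paper does; as written, the core of the proof is absent.
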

	\begin{proof}
	If $G$ is paw-free, then by Lemma~\ref{pawlemma}, $cop(G)\leq 2$. So we may assume that $G$ has an induced paw, say with vertex set $P=\{v_1,v_2,v_3,v_4\}$ and edge set $\{v_1v_2,v_2v_3,v_3v_4,v_4v_2\}$. Define the sets $A_i, B_{ij}, T_i, D$, and $X$ around $P$ as defined in~\ref{pawstruct1} for every $1 \leq i,j \leq 4$ and $i<j$. Since $G$ is $K_3 \cup K_1$-free, every $C_3$ of $G$ is a dominating cycle of $G$. So there is no vertex at distance at least~$2$ from $P$ and hence $X=\emptyset$. 
 Note that $A_1=\emptyset$; otherwise for any $a\in A_1$, $\{v_2,v_3,v_4,a\}$ induces a $K_3\cup K_1$. Moreover, $B_{34}=\emptyset$; otherwise for any $b\in B_{34}$, $\{v_1,v_3,v_4,b\}$ induces a $K_3 \cup K_1$. We divide the proof into the following two cases depending on whether $B_{12}$ is empty or not. In each case, we show that the robber gets captured by two cops after a finite number of turns.    
	
\noindent {\bf Case 1:} $B_{12}\neq \emptyset$.
		
Let $b$ be a vertex of $B_{12}$. If $A_3\neq \emptyset$, then for any $a\in A_3$, either $\{b,v_1,v_2,a\}$ induces a $K_3\cup K_1$ or $\{v_4,v_3,a,b,v_1\}$ induces a $P_5$, a contradiction. So $A_3=\emptyset$. Due to symmetry, we have $A_4=\emptyset$. Now if $B_{14}\cup T_3\neq\emptyset$, then for any $u\in B_{14}\cup T_3$, either $\{v_3,v_4,u,v_1,b\}$ induces a $P_5$ or $\{u,b,v_1,v_3\}$ induces a $K_3\cup K_1$, a contradiction. So $B_{14}\cup T_3=\emptyset$. In the first turn, we place Cop~$1$ and Cop~$2$ at $v_3$ and $v_4$, respectively. To avoid immediate capture, the robber should choose a vertex $x$ that is not adjacent to $v_3$ and $v_4$. Since $A_1=X=\emptyset$, we have $x\in A_2 \cup\{v_1\}\cup B_{12}$. In the next turn, Cop~$1$ stays at $v_3$ and Cop~$2$ moves to $v_2$. If the robber stays at $x$, then it gets captured by Cop~$2$. Since $A_1 = A_4=B_{14}=X=\emptyset$, we have $N[x]\subseteq N[v_2]\cup N[v_3]$. So wherever the robber moves, it gets captured by Cop~$1$ or Cop~$2$. 
	
\noindent {\bf Case 2:} $B_{12}= \emptyset$.
	
In the first turn, we place Cop~$1$ and Cop~$2$ at $v_1$ and $v_2$, respectively. To avoid immediate capture, the robber has to choose a vertex $x$ that is not adjacent to $v_1$ and $v_2$. Since $B_{34}=X=\emptyset$, we have $x\in A_3\cup A_4$. Due to symmetry, we may assume that $x\in A_3$. In the next turn, Cop~$1$ and Cop~$2$ move to $v_2$ and $v_4$, respectively. Note that $x$ does not have any neighbor in $B_{13}$; otherwise for any neighbor $y\in B_{13}$ of $x$, $\{x,y,v_1,v_2,v_4\}$ induces a $P_5$ in $G$. So the robber cannot move to a vertex of $B_{13}$. Moreover, since $A_1=X=\emptyset$, to avoid capture, the robber should stay in $A_3$. At the end of this round, suppose that the robber is at $x'\in A_3$. Note that $x'$ may be equal to $x$. In the next turn, Cop~$1$ moves to $v_3$ and Cop~$2$ stays at $v_4$. To avoid immediate capture, the robber must move to a vertex $r$ that is not adjacent to $v_3$ and $v_4$. Since $A_1= B_{12} =X= \emptyset $, we have $r\in A_2$. In the next turn, Cop~$1$ and Cop~$2$ move to $x'$ and $v_3$, respectively. To avoid immediate capture, the robber must move to a vertex $r'$ that is not adjacent to $x'$ and $v_3$. Since $A_1=B_{12}=X=\emptyset$, we have $r'\in A_2\cup A_4\cup B_{14}\cup B_{24}\cup T_3$. Since neither $\{r',r,x',v_3,v_4\}$ nor $\{x',v_3,v_4,r',v_1\}$ induces a $P_5$, we have $r'\notin A_2\cup B_{14}\cup T_3$. Again since $\{r',v_2,v_4,x'\}$ does not induce a $K_3\cup K_1$, we have $r'\notin B_{24}$ and hence $r'\in A_4$. In the next turn, Cop~$1$ and Cop~$2$ move to $r$ and $v_2$, respectively. To avoid immediate capture, the robber must move to a vertex $r''$ that is not adjacent to $r$ and $v_2$. Since $A_1=B_{34}=X=\emptyset$ and $r''$ is not adjacent to $v_2$, we have $r''\in A_3\cup A_4\cup B_{13} \cup B_{14} \cup T_2$. Since $\{r'',r',r,v_2,v_1\}$ does not induce a $P_5$, we have $r''\notin A_3\cup A_4$. Again since neither $\{r',r'',v_1,v_2,v_3\}$ induces a $P_5$ nor $\{r'',v_3,v_4,r\}$ induces a $K_3\cup K_1$, we have $r''\notin B_{14}\cup T_2$ and hence $r''\in B_{13}$. Note that $x'$ is not adjacent to $r''$; otherwise  $\{x',r'',v_1,v_2,v_4\}$ induces a $P_5$. Now $\{x',r,r',r'',v_1\}$ induces a $P_5$ which is a contradiction. So such a vertex $r''$ does not exist. Therefore, the robber cannot escape from $r'$ and gets captured by Cop~$1$.   
	\end{proof}

	\section{On the class of $P_3 \cup P_1$-free graphs} \label{sec:P3P1}
	
Let $G$ be a connected $P_3\cup P_1$-free graph. If $G$ has no induced $P_3$, then $G$ is isomorphic to a complete graph and hence the domination number of $G$ is~$1$. Suppose that $G$ has an induced $P_3$. Then, since $G$ is $P_3\cup P_1$-free, the set of vertices of any induced $P_3$ of $G$ is a dominating set of $G$ and hence the domination number of $G$ is at most $3$. This implies that the cop number of $G$ is at most $3$. In the following theorem, we show that two cops are sufficient to capture the robber in any connected $P_3 \cup P_1$-free graph.
	
	\begin{theorem} \label{p3up1lemma}
		Let $G$ be a connected $P_3 \cup P_1$-free graph. Then $cop(G) \leq 2$.
	\end{theorem}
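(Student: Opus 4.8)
The plan is to prove the stronger statement that every connected $P_3\cup P_1$-free graph $G$ has domination number at most $2$, and then observe that this immediately yields $cop(G)\le 2$. The reduction is easy: if $\{u,w\}$ is a dominating set of size $2$, place Cop~$1$ at $u$ and Cop~$2$ at $w$; the robber must choose a vertex $r\notin\{u,w\}$, and since $\{u,w\}$ dominates, $r\in N(u)\cup N(w)$, say $r\in N(u)$, so Cop~$1$ captures the robber on the very first move by stepping from $u$ to $r$. (If instead $G$ has a dominating vertex, one cop suffices.) Thus the whole difficulty is concentrated in bounding the domination number.

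To control the domination number I would pass to the complement. Note that $\overline{P_3\cup P_1}$ is exactly the paw, so $G$ is $P_3\cup P_1$-free if and only if $\overline{G}$ is paw-free. By Olariu's structure theorem for paw-free graphs, every connected component of $\overline{G}$ is either triangle-free or complete multipartite. I would then split into three cases according to the global structure of $\overline{G}$.

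First, if $\overline{G}$ is disconnected, then $G$ is the join of the complements of the components of $\overline{G}$; picking one vertex $x$ from one part and one vertex $y$ from another part, every vertex outside the part of $x$ is adjacent to $x$ and every vertex of that part is adjacent to $y$, so $\{x,y\}$ dominates $G$. Second, if $\overline{G}$ is connected and complete multipartite, then $G$ is a disjoint union of cliques, and connectivity forces $G$ to be a single clique, giving domination number $1$. Third, if $\overline{G}$ is connected and triangle-free, then $G$ has no independent set of size $3$, that is $\alpha(G)\le 2$; in this case, for any vertex $v$ the set $V(G)\setminus N[v]$ is a clique (two non-neighbours of $v$ that were non-adjacent would form an independent triple with $v$), so choosing any $w\in V(G)\setminus N[v]$ gives $N[v]\cup N[w]=V(G)$ and hence a dominating pair. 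In every case $\gamma(G)\le 2$, completing the argument.

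The only genuinely nontrivial input is the structural dichotomy for $\overline{G}$; everything else is a short case check. If one prefers to keep the proof self-contained and avoid citing Olariu's theorem, the same conclusion can be reached directly: when $\alpha(G)\le 2$ the clique-of-non-neighbours argument above already produces a dominating pair, and when $\alpha(G)\ge 3$ one fixes an independent triple $\{x,y,z\}$ and uses $P_3\cup P_1$-freeness to show that no vertex is adjacent to exactly two of $x,y,z$, from which one deduces that $\overline{G}$ splits and $G$ is a join. The main obstacle, in either presentation, is exactly this last point, namely proving that the presence of a large independent set forces the join structure, while the passage from domination number to cop number is essentially free.
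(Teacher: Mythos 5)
Your proposal is correct, but it takes a genuinely different route from the paper's. The paper gives a short, self-contained pursuit argument: both cops start at an arbitrary vertex $v$; an induced $P_3$ in $G-N[v]$ together with $v$ would induce a $P_3\cup P_1$, so $G-N[v]$ is $P_3$-free and each of its components is a clique; the robber is therefore confined to one such clique $C$, and while Cop~1 guards $v$, Cop~2 walks into $C$ and captures. You instead prove the stronger structural fact that $G$ has a dominating set of size at most $2$, and then use the standard (and correct) reduction from domination number to cop number. Your case analysis, passing to the paw-free complement $\overline{G}$ and invoking Olariu's dichotomy, is sound in all three cases: the join case, the disjoint-union-of-cliques case (essentially vacuous for a connected $G$ on at least two vertices), and the case $\alpha(G)\le 2$, where $V(G)\setminus N[v]$ is a clique and hence any vertex in it together with $v$ dominates. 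What your route buys is a genuinely sharper conclusion than the remark preceding the theorem in the paper, which only observes that the domination number is at most $3$; a dominating pair makes the game itself trivial. What it costs is the dependence on Olariu's structure theorem: your sketched self-contained substitute (no vertex is adjacent to exactly two vertices of an independent triple) is true but, as you yourself note, does not by itself yield the join structure, since vertices adjacent to zero or exactly one vertex of the triple are not excluded, and a further propagation argument would still be required there. The paper's proof needs nothing beyond the one-line hereditary observation about $G-N[v]$.
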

	
	\begin{proof}
Let $v\in V(G)$. In the first turn, we place both Cop~$1$ and Cop~$2$ at $v$. To avoid immediate capture, the robber must choose a vertex $x$ of $G-N[v]$ if exists. We may assume that such a vertex $x$ exists; otherwise we have $V(G)=N[v]$ and hence $cop(G)=1$. Let $C$ be the component of $G-N[v]$ that contains $x$. Since $G$ is $P_3\cup P_1$-free, $G-N[v]$ is $P_3$-free and hence every component of $G-N[v]$ is a complete graph. In particular, $C$ is a complete graph. Since $G$ is connected, $N(C)\cap N(v)\neq \emptyset$. Let $r\in N(C)\cap N(v)$. Note that while one of the cops stays at $v$, the robber cannot move to a vertex of $G-V(C)$ in order to avoid immediate capture. So for the next few turns, Cop~$1$ stays at $v$ and Cop~$2$ goes to a vertex of $C$ through the vertex $r$ and captures the robber.
	\end{proof}

	\section{On the class of $(P_5,$ diamond$)$-free graphs and $2K_1\cup K_2$-free graphs} \label{sec:diamondand2k1Uk2}
	
	Let $H$ be an induced subgraph of a graph $G$. A \emph{retraction} from $G$ to $H$ is a homomorphism from $G$ onto $H$ that maps every vertex of $H$ to itself. Formally, a retraction from $G$ to $H$ is a mapping $\phi:V(G)\rightarrow V(H)$ such that: $(1)$ $\phi(u)=u$ for every $u\in V(H)$ and $(2)$ if $xy\in E(G)$, then either $\phi(x)=\phi(y)$ or $\phi(x)\phi(y)\in E(H)$. We say that $H$ is a \emph{retract} of $G$ if there exists a retraction from $G$ to $H$. Note that if $G$ is a connected graph and $H$ is a retract of $G$, then $H$ is connected.
	
	In this section, we prove that the cop number of any $(P_5,$~diamond$)$-free graph and any $(P_5,2K_1\cup~K_2)$-free graph is at most~$2$. We use the following lemma to prove these results. We note that the lemma is an implication of a result of Berarducci and Intrigila \cite{berarduccioncopnumbers}. To make the paper self-contained, we give a proof of it. 
	
	\begin{lemma}[\cite{berarduccioncopnumbers}]\label{retractlem}
	Let $H$ be a retract of a connected graph $G$ such that every component of $G-V(H)$ is a complete graph. If $cop(H)\leq 2$, then $cop(G)\leq 2$. 
	\end{lemma}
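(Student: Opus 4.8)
The plan is to combine the retraction $\phi:V(G)\to V(H)$ with a winning strategy for two cops on $H$, and to handle the ``extra'' vertices of $G$ (those in $V(G)\setminus V(H)$, which by hypothesis form disjoint complete graphs) by a standard shadow/projection argument. First I would set up the projection strategy: the two cops will play on $G$ but will \emph{pretend} that the robber is located at $\phi(r)$, where $r$ is the robber's true position. I want to argue that a legal move of the robber from $r$ to $r'$ in $G$ projects to a legal move (stay-or-step) of the ``shadow robber'' from $\phi(r)$ to $\phi(r')$ in $H$: this is exactly what property $(2)$ of a retraction guarantees, since $rr'\in E(G)$ forces either $\phi(r)=\phi(r')$ or $\phi(r)\phi(r')\in E(H)$. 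Hence the cops can run their guaranteed two-cop winning strategy on $H$ against this shadow, keeping at least one cop on the image $\phi$-track.

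Next I would describe what ``capture on $H$'' buys us. Using the $cop(H)\le 2$ hypothesis, after finitely many rounds the cops force a situation where some cop sits at a vertex $h\in V(H)$ with $\phi(r)\in N_H[h]$, i.e.\ $\phi(r)=h$ or $\phi(r)h\in E(H)$. The key step is to convert this ``shadow capture'' into a real capture. If $\phi(r)=h$, then either $r=h$ (done) or $r$ lies outside $H$ but $\phi(r)=h$; the point is that $r$ must then sit in one of the complete components $C$ of $G-V(H)$, and $h$ is reachable/adjacent appropriately so that one free cop can enter $C$ and corner the robber there. This is where I would lean on the structural hypothesis that every component of $G-V(H)$ is complete: a single cop entering a clique captures the robber in one move, exactly as in the argument of Theorem~\ref{p3up1lemma}, provided the other cop guards the (finite) boundary between $C$ and $H$ so the robber cannot flee back into $H$ without being caught by the projecting cop.

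Concretely, I would first drive the shadow to capture, freeing up the second cop, and then use the $H$-cop to hold the projection while the free cop walks into the clique containing the robber and finishes. The robber cannot escape the clique $C$ into $N(C)$ without moving to a vertex whose $\phi$-image is covered by the holding cop, so every escape attempt is punished. The main obstacle I anticipate is precisely this \textbf{handoff}: making rigorous that when the shadow is captured, one cop can be spared to guard the cut between $C$ and the rest of $G$ while the other performs the clique capture, and verifying that the retraction property keeps the guarding cop correctly positioned during the (possibly several) rounds of the clique chase. I would resolve this by noting that the set $N(C)\cap V(H)$ that the robber could use to escape projects, under $\phi$, into the closed neighborhood the $H$-cop already dominates, so a single stationary guard suffices. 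The remaining bookkeeping—that all of this terminates in finitely many rounds—is routine once the two phases (shadow capture, then clique capture) are each seen to be finite.
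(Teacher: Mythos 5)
Your overall plan coincides with the paper's proof: use property $(2)$ of the retraction to run the two-cop winning strategy of $H$ against the shadow $\phi(r)$, capture the shadow, and then exploit the hypothesis that the components of $G-V(H)$ are complete, with one cop preventing re-entry into $H$ while the other walks into the clique $C$ containing the robber. Up to the handoff, this is exactly what the paper does, and it is sound.

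The gap is in the step you yourself flag as the main obstacle, and in the resolution you offer for it: it is \emph{not} true that $N(C)\cap V(H)$ lies inside the closed neighborhood that the holding cop ``already dominates,'' and a stationary guard does \emph{not} suffice. Distinct vertices of $C$ may have distinct images under $\phi$, so all one can say is $N(C)\cap V(H)\subseteq\bigcup_{c\in C}N_H[\phi(c)]$, which is in general strictly larger than $N_H[\phi(r)]$ for the vertex $r$ occupied at the moment of shadow capture. Concretely, let $H$ be the path $a$-$b$-$c$ and let $G$ add a clique $C=\{x,y\}$ with $x$ adjacent to $a$ and $y$ adjacent to $c$; the map fixing $H$ with $\phi(x)=a$ and $\phi(y)=b$ is a retraction and $cop(H)=1$, but $N(C)\cap V(H)=\{a,c\}\not\subseteq N_H[a]$, so a guard parked at $a=\phi(x)$ cannot punish the robber playing $x\to y$ and then $y\to c$: the robber re-enters $H$ out of the guard's reach, the shadow capture is lost, and nothing in your argument prevents this from recurring indefinitely. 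The repair is exactly the paper's mechanism, which you gesture at with ``hold the projection'' but then abandon: the guarding cop is never stationary; after every robber move it steps onto the new shadow $\phi(r)$, which is always a legal cop move because the robber's move projects to a stay-or-step in $H$. Then any robber move onto a vertex $w\in V(H)$ lands on $w=\phi(w)\in N_H[\phi(r)]$, i.e.\ on or adjacent to the tracking cop, so the robber is confined to $C$, and the second cop's walk into the clique finishes the game. With ``stationary guard'' replaced by ``guard that re-occupies the shadow every round,'' your argument closes and is the paper's proof.
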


	\begin{proof}
	Let $\phi$ be a retraction from $G$ to $H$ and $cop(H)\leq 2$. We show that two cops can capture the robber in the graph $G$ after a finite number of turns. At a turn, we say that the robber's image is at a vertex $u$ if the image of the robber's position is $u$ under $\phi$. 
	
	In the first turn, we place both the cops at some vertex of $H$. In the next turn, the robber chooses a vertex of $G$ as its position.  
	The cops first try to capture the robber's image by playing in the graph $H$. Note that since $\phi$ is a homomorphism, for any move of the robber in $G$, its image has a valid move in $H$, that is it either moves to an adjacent vertex in $H$ or stays at the same vertex of $H$. Since $H$ is connected and the cop number of $H$ is at most $2$, the cops capture the robber's image after a finite number of turns. So without loss of generality, we may assume that one of the cops, say Cop~$1$, is at the robber's image, the other cop is at any vertex of $H$, and it is now robber's turn to move. Since $\phi$ is an identity mapping on $H$, the robber is already captured if it is in $H$. So we may assume that the robber is in $G-V(H)$. Now Cop~$1$ follows the robber's image, that is after every turn of cops', Cop~$1$ is at the robber's image. So to avoid immediate capture, the robber must stay in $G-V(H)$. Hence the robber must move within a component of $G-V(H)$, say $C$. Then Cop~$2$ follows a path from its current position to a vertex of $C$. After a finite number of turns, Cop~$2$ and the robber are in $C$ and Cop~$1$ is at the robber's image. Recall that every component of $G-V(H)$ is a complete graph; in particular, $C$ is a complete graph. So the robber gets captured by Cop~$2$ implying that $cop(G)\leq 2$.
	\end{proof}
	
\subsection{$(P_5,$ diamond$)$-free graphs}

	In the following theorem, we prove that the cop number of any connected $(P_5,$ diamond$)$-free graph is at most~$2$.
	
	\begin{manualtheorem}{6.1.1}\label{p5diam}
	Let $G$ be a connected $(P_5,$~diamond$)$-free graph. Then $cop(G)\leq 2$.
	\end{manualtheorem}
	\begin{proof}
		For the sake of contradiction, assume that there exists a counterexample of the theorem. Let $G$ be a minimum counterexample of the theorem, that is $G$ is a connected $(P_5,$~diamond$)$-free graph with minimum number of vertices such that $cop(G)>2$. To proceed further, we first prove a series of claims.
		
\begin{claim}\label{thm:p5diamcl1}
If $u$ is a vertex of $G$, then every component of $G[N(u)]$ is a complete graph. Moreover, $G[N(u)]$ is a disconnected graph.
\end{claim}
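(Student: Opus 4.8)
The plan is to establish the two assertions separately: the first (each component of $G[N(u)]$ is complete) follows from diamond-freeness alone, while the second (disconnectedness) is where the minimality of the counterexample and the retract machinery of Lemma~\ref{retractlem} come into play.

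For the first assertion I would argue by contradiction. Suppose some component $C$ of $G[N(u)]$ fails to be complete. Since a connected graph is complete precisely when it has no induced $P_3$, there must exist vertices $x,z,y \in C$ with $xz, zy \in E(G)$ but $xy \notin E(G)$. All three lie in $N(u)$, so $u$ is adjacent to each of them; consequently $\{u,x,z,y\}$ spans exactly the five edges $ux, uy, uz, xz, zy$ and misses only $xy$. That is precisely an induced diamond, with $u$ and $z$ as the two degree-$3$ vertices and $x,y$ as the two non-adjacent degree-$2$ vertices. This contradicts the diamond-freeness of $G$, so every component of $G[N(u)]$ is a complete graph. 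Note that this step uses neither $P_5$-freeness nor minimality.

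For the ``moreover'' part I would again argue by contradiction and assume $G[N(u)]$ is connected for some $u$. By the first assertion it is then a single complete component, so $N[u]$ is a clique. Choosing any neighbor $v$ of $u$ (which exists because $G$ is connected with at least two vertices), the clique property gives $N[u] \subseteq N[v]$, so $u$ is a dominated (corner) vertex. The map $\phi$ that fixes $V(G)\setminus\{u\}$ and sends $u \mapsto v$ should then be a retraction of $G$ onto $H := G-u$: the only edges needing a check are those at $u$, and each $uw$ maps to $vw$, which is an edge of $H$ (or collapses when $w=v$) exactly because $w \in N[u] \subseteq N[v]$.

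Finally, $H = G-u$ is a connected $(P_5,\text{diamond})$-free graph on fewer vertices, and $G - V(H) = \{u\}$ is a single vertex, hence a complete graph. By the minimality of $G$ we get $cop(H)\le 2$, and then Lemma~\ref{retractlem} forces $cop(G)\le 2$, contradicting $cop(G)>2$; thus $G[N(u)]$ is disconnected. I expect the main obstacle to be the second step rather than the first: one must verify carefully that $u$ is genuinely a corner, that $G-u$ remains connected and stays within the $(P_5,\text{diamond})$-free class, and that the single deleted vertex satisfies the complete-component hypothesis of Lemma~\ref{retractlem}, so that both the retract lemma and the inductive (minimality) hypothesis legitimately apply. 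The first assertion, by contrast, is a routine diamond-detection argument.
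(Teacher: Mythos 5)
Your proposal is correct and follows essentially the same route as the paper: diamond-freeness makes $G[N(u)]$ $P_3$-free (hence its components are cliques), and if $G[N(u)]$ were connected, mapping $u$ to a neighbor gives a retraction onto $G-\{u\}$, so minimality of the counterexample together with Lemma~\ref{retractlem} yields $cop(G)\le 2$, a contradiction. The paper's proof is exactly this argument (it maps $u$ to an arbitrary vertex $w$ of the unique clique component, which is the same as your choice of a neighbor $v$), so there is nothing substantive to add.
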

\begin{proof}[Proof of Claim~\ref{thm:p5diamcl1}]
Let $u$ be a vertex of $G$. Since $G$ is diamond-free, $G[N(u)]$ is $P_3$-free and hence every component of $G[N(u)]$ is a complete graph. Now for the sake of contradiction, assume that $G[N(u)]$ is connected, that is it has only one component, say $G'$. Note that for any $w\in V(G')$, $N[u]\subseteq N[w]$ since $G'$ is a complete graph. Define a mapping $\phi:V(G)\rightarrow V(G)\setminus \{u\}$ that maps $u$ to $w$ for some $w\in V(G')$ and maps every vertex of $G-\{u\}$ to itself. Note that $\phi$ is a retraction from $G$ to $G-\{u\}$, that is $G-\{u\}$ is a retract of $G$. Moreover, since $G$ is a minimum counterexample, $cop(G-\{u\})\leq 2$. So by taking $H=G-\{u\}$ in Lemma~\ref{retractlem}, we have $cop(G)\leq 2$, a contradiction. Hence $G[N(u)]$ is a disconnected graph.
\end{proof}

\begin{claim}\label{thm:p5diamcl2}
If $u$ is a vertex of $G$, then for any vertex $v$ other than $u$, $N(v)\setminus N[u]\neq \emptyset$.
\end{claim}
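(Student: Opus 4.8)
The plan is to argue by contradiction, mirroring the retraction reduction already used for Claim~\ref{thm:p5diamcl1}. Fix the vertex $u$ and suppose, for contradiction, that some vertex $v \neq u$ satisfies $N(v) \setminus N[u] = \emptyset$, i.e. $N(v) \subseteq N[u]$. The key observation is that this containment is exactly the condition needed to retract $v$ onto $u$, so the same machinery as in Claim~\ref{thm:p5diamcl1} applies.

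Concretely, I would define $\phi : V(G) \to V(G) \setminus \{v\}$ by $\phi(v) = u$ and $\phi(x) = x$ for every $x \neq v$, and set $H = G - \{v\}$. To see that $\phi$ is a retraction from $G$ to $H$, note first that $\phi$ fixes every vertex of $H$, and $\phi(v) = u \in V(H)$ since $u \neq v$. For the homomorphism condition, the only edges I need to check are those incident to $v$: if $vy \in E(G)$ then $y \in N(v) \subseteq N[u]$, so either $y = u$, giving $\phi(v) = \phi(y) = u$, or $y \in N(u)$, in which case (as $y \neq v$ and $u \neq v$) we have $\phi(v)\phi(y) = uy \in E(H)$. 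Every edge not incident to $v$ is preserved verbatim. Hence $H$ is a retract of $G$.

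Then I would invoke the minimality of $G$ together with Lemma~\ref{retractlem}. Since $H$ is an induced subgraph of $G$, it is $(P_5,\text{diamond})$-free; being a retract of the connected graph $G$, it is connected; and it has one fewer vertex than $G$. By minimality of the counterexample, $cop(H) \leq 2$. Finally, $G - V(H) = \{v\}$, whose single component is the complete graph $K_1$, so Lemma~\ref{retractlem} applies and yields $cop(G) \leq 2$, contradicting that $G$ is a counterexample. This forces $N(v) \setminus N[u] \neq \emptyset$ for every $v \neq u$.

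I do not expect a genuine combinatorial obstacle here: the claim is the direct generalization of the ``dominated vertex'' reduction of Claim~\ref{thm:p5diamcl1} (where $u$ was retracted onto a neighbor because $G[N(u)]$ was assumed connected), and the only points requiring care are checking the homomorphism condition on the edges incident to $v$ and confirming that the two hypotheses of Lemma~\ref{retractlem}, namely connectivity of the retract and completeness of the components of $G - V(H)$, are met, both of which are immediate.
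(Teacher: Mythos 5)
Your proposal is correct and follows essentially the same route as the paper: the paper also argues by contradiction, defines the identical retraction $\phi$ mapping $v$ to $u$ and fixing everything else, and invokes minimality of the counterexample together with Lemma~\ref{retractlem} applied to $H = G-\{v\}$. The only difference is that you spell out the verification of the homomorphism condition and the hypotheses of Lemma~\ref{retractlem}, which the paper leaves implicit.
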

\begin{proof}[Proof of Claim~\ref{thm:p5diamcl2}]
 Let $u$ be a vertex of $G$. For the sake of contradiction, assume that there exists a vertex $v$ other than $u$ such that $N(v)\setminus N[u]=\emptyset$, that is $N(v)\subseteq N[u]$. Define a mapping $\phi':V(G)\rightarrow V(G)\setminus \{v\}$ that maps $v$ to $u$ and maps every vertex of $G-\{v\}$ to itself. Note that $\phi'$ is a retraction from $G$ to $G-\{v\}$, that is $G-\{v\}$ is a retract of $G$. Moreover, since $G$ is a minimum counterexample, $cop(G-\{v\})\leq 2$. So by taking $H=G-\{v\}$ in Lemma~\ref{retractlem}, we have $cop(G)\leq 2$, a contradiction. Hence $N(v)\setminus N[u]\neq \emptyset$. 
\end{proof}

\begin{claim}\label{thm:p5diamcl3}
If $uv$ is an edge of $G$, then there exist vertices $y,z\in V(G)\setminus N[u]$ such that $\{u,v,y,z\}$ induces a $P_4$ in $G$. 
\end{claim}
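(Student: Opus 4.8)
The plan is to build the desired induced $P_4$ outward from the edge $uv$ in two steps, using Claim~\ref{thm:p5diamcl2} to supply each new vertex, and then to combine diamond-freeness (through the clique decomposition of Claim~\ref{thm:p5diamcl1}) with $P_5$-freeness to force the second new vertex to lie outside $N[u]\cup N[v]$.

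First I would apply Claim~\ref{thm:p5diamcl2} with centre $u$ to the neighbour $v$, obtaining a vertex $y\in N(v)\setminus N[u]$; thus $u-v-y$ is an induced $P_3$ with $uy\notin E(G)$. Applying Claim~\ref{thm:p5diamcl2} again with centre $u$ to the vertex $y$ yields $z\in N(y)\setminus N[u]$, and since $v\in N[u]$ this automatically gives $z\neq v$ and $uz\notin E(G)$. Now $\{u,v,y,z\}$ induces a $P_4$ exactly when $vz\notin E(G)$, the only failure being $vz\in E(G)$ (a paw). Hence it suffices to prove that $N(y)\setminus(N[u]\cup N[v])\neq\emptyset$, i.e. that $y$ has a neighbour avoiding both closed neighbourhoods.

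The clean reduction uses Claim~\ref{thm:p5diamcl1}. Decompose $G[N(y)]$ into its clique components and let $Q_v$ be the one containing $v$; as $Q_v$ is a clique we have $Q_v\subseteq N[v]$, so its vertices are useless. For any other component $Q$ no vertex is adjacent to $v$, so $Q\cap N[v]=\emptyset$; therefore, if the failure hypothesis $N(y)\subseteq N[u]\cup N[v]$ held, every vertex of $Q$ would lie in $N(u)$. But $N(u)\cap N(y)$ is independent: two adjacent common neighbours of $u$ and $y$, together with the non-adjacent pair $u,y$, would induce a diamond. Since $Q$ is a clique contained in this independent set, $|Q|\le 1$. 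Thus in the failure case $G[N(y)]$ consists of $Q_v$ together with isolated vertices, each lying in $N(u)\setminus N[v]$, and at least one such isolated vertex $q$ exists because $G[N(y)]$ is disconnected. I would also keep a companion vertex $z\in N(y)\cap N(v)$ with $z\notin N[u]$ and $zq\notin E(G)$, which exists since $N(y)\setminus N[u]\neq\emptyset$ by Claim~\ref{thm:p5diamcl2} and since $q$ is isolated in $G[N(y)]$.

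The final and hardest step is to eliminate this residual configuration. I would invoke Claim~\ref{thm:p5diamcl2} once more, with centre $u$ and vertex $q$, to obtain $w\in N(q)\setminus N[u]$; the isolation of $q$ in $G[N(y)]$ forces $wy\notin E(G)$, since otherwise $w$ would be a second neighbour of $y$ adjacent to $q$. One then checks that $w-q-u-v-z$ is an induced $P_5$ — the chords $wu,qv,qz,uz$ are all non-edges by construction — unless $w$ is adjacent to $v$ or to $z$. The main obstacle is precisely these two leftover sub-cases: because the five-vertex neighbourhoods around $uv$ are too edge-dense to contain a $P_5$ on their own, ruling them out requires reaching out to this sixth far vertex $w$ and re-running the same extension symmetrically from $z$ (using $N(z)\setminus N[v]$) to produce a forbidden $P_5$ in each remaining case. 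I expect the bulk of the write-up to be this case check, with the diamond-freeness reduction of the previous paragraph being the step that makes it tractable.
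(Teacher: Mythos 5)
Your reductions up to the case split are correct (the clique decomposition of $G[N(y)]$, the independence of $N(u)\cap N(y)$ via diamond-freeness, the existence of the isolated vertex $q$ and of $z$, and the non-adjacency $wy\notin E(G)$), but the proof is not complete: the two leftover sub-cases ($w$ adjacent to $v$, or $w$ adjacent to $z$) are exactly where the argument stalls, and you do not actually resolve them. This is a genuine gap, not a routine verification. For instance, in the sub-case $wv\in E(G)$, $wz\notin E(G)$, the six vertices $u,v,y,z,q,w$ with edges $uv,vy,vz,yz,yq,uq,qw,vw$ contain no induced $P_5$ and no diamond, so no contradiction is available from the configuration you have built; the same holds for the sub-case $wz\in E(G)$, $wv\notin E(G)$. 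Any further progress requires introducing yet more vertices (e.g.\ via Claim~\ref{thm:p5diamcl2} applied to $z$ with centre $v$, as you suggest), but each new vertex comes with undetermined adjacencies to all previous ones, and you give no argument that this case tree closes rather than regressing indefinitely. Note also that you are trying to prove a stronger statement than the claim: you fix an arbitrary $y\in N(v)\setminus N[u]$ and try to show that this particular $y$ extends, whereas the claim only asserts that some $y$ does; it is not at all clear that the pointwise statement follows from Claims~\ref{thm:p5diamcl1} and~\ref{thm:p5diamcl2} together with $(P_5,\mathrm{diamond})$-freeness alone.

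The paper's proof avoids all of this by using the minimality of the counterexample a third time, rather than forbidden-subgraph combinatorics. It takes a whole component $G^*$ of the graph induced by $N(v)\setminus N[u]$ (a complete graph by Claim~\ref{thm:p5diamcl1}) and assumes, for contradiction, that every vertex of $G^*$ has all its neighbors in $N[\{u,v\}]$. Since $G^*$ is a component of $G[N(v)\setminus N[u]]$, any neighbor of $G^*$ lying in $N(v)\setminus N[u]$ would belong to $G^*$ itself, so in fact $N(V(G^*))\subseteq N[u]$; one can then map all of $G^*$ to $u$ and fix everything else, obtaining a retraction onto $G-V(G^*)$, and Lemma~\ref{retractlem} together with $cop(G-V(G^*))\leq 2$ (minimality) gives $cop(G)\leq 2$, a contradiction. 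This is the same mechanism that proves Claims~\ref{thm:p5diamcl1} and~\ref{thm:p5diamcl2}, and it is the ingredient your proposal abandons after the first two claims. To repair your argument you should replace the deferred case analysis by this component-collapsing retraction; as written, the proposal does not constitute a proof.
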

\begin{proof}[Proof of Claim~\ref{thm:p5diamcl3}]
Let $uv$ be an edge of $G$. By Claim~\ref{thm:p5diamcl2}, $N(v)\setminus N[u]\neq\emptyset$. Let $G^*$ be a component of the graph induced by $N(v)\setminus N[u]$. By Claim~\ref{thm:p5diamcl1}, every component of the graph $G[N(v)]$ is a complete graph. Hence every component of the graph induced by $N(v)\setminus N[u]$ is a complete graph; in particular, $G^*$ is a complete graph. 

To prove the claim, it is sufficient to show the existence of a vertex $y\in V(G^*)$ that has a neighbor $z$ such that $z\notin N[\{u,v\}]$. For the sake of contradiction, assume that every neighbor of every vertex of $V(G^*)$ is in the set $N[\{u,v\}]$. Then $N(V(G^*))\subseteq N[\{u,v\}]$. Since $G^*$ is a component of the graph induced by $N(v)\setminus N[u]$, we have $N(V(G^*))\subseteq N[u]$. Define a mapping $\phi'':V(G)\rightarrow V(G)\setminus V(G^*)$ that maps every vertex of $G^*$ to $u$ and maps every vertex of $G-V(G^*)$ to itself. Note that $\phi''$ is a retraction from $G$ to $G-V(G^*)$, that is $G-V(G^*)$ is a retract of $G$. Moreover, since $G$ is a minimum counterexample, $cop(G-V(G^*))\leq 2$. Since $G^*$ is a complete graph, by taking $H=G-V(G^*)$ in Lemma~\ref{retractlem}, we have $cop(G)\leq 2$, a contradiction. Hence the claim holds. 
\end{proof}

			 Now we return to the proof of Theorem~\ref{p5diam}.  If $G$ is $K_4$-free, then by Theorem~\ref{p5k4}, $cop(G)\leq 2$, a contradiction. Hence $G$ must contain a subgraph isomorphic to $K_4$. Let $u$ be a vertex of any induced $K_4$ of $G$. Then there exists a component $G'$ of $G[N(u)]$ such that $|V(G')|\geq 3$. By Claim~\ref{thm:p5diamcl1}, the graph $G[N(u)]$ is disconnected and every component of $G[N(u)]$ is a complete graph. So there exists a component $G''$ of $G[N(u)]$ other than $G'$ and both $G'$ and $G''$ are complete graphs. Let $v$ be a vertex of $G''$. By Claim~\ref{thm:p5diamcl3}, there exist vertices $y,z\in V(G)\setminus N[u]$ such that $\{u,v,y,z\}$ induces a $P_4$ in $G$. Note that $y$ has at most one neighbor in $V(G')$; otherwise $G[V(G')\cup \{y,u\}]$ contains an induced diamond. Similarly, we can show that $z$ has at most one neighbor in $V(G')$. Since $G'$ and $G''$ are different components of $G[N(u)]$ and $v$ is a vertex of $G''$, $v$ does not have any neighbor in $V(G')$. Now since $|V(G')|\geq 3$, there exists a vertex $x\in V(G')$ such that $x$ is not adjacent to $v,y,$ and $z$. Then $\{x,u,v,y,z\}$ induces a $P_5$ in $G$, a contradiction. So we may conclude that such a graph $G$ does not exist. This completes the proof of Theorem~\ref{p5diam}. 
	\end{proof}
	
\subsection{$2K_1\cup K_2$-free graphs}
	
	 Let $G$ be a connected $2K_1\cup K_2$-free graph and $uv$ be an edge of $G$. If $\{u,v\}$ is a dominating set of $G$, then $cop(G)\leq 2$. If $\{u,v\}$ is not a dominating set of $G$, then there exists a vertex $z$ that is not adjacent to $u$ and $v$. Now since $G$ is $2K_1\cup K_2$-free, $\{u,v,z\}$ is a dominating set of $G$. So the cop number of $G$ is at most $3$. Therefore, the cop number of any $2K_1\cup K_2$-free graph is at most~$3$. Turcotte~\cite{jturcotte2k2freegraphs} showed the existence of $2K_1\cup K_2$-free graphs having the cop number~$3$ with computer aided graph search. So the cop number of the class of $2K_1\cup K_2$-free graphs is~$3$. Note that any complete multipartite graph is $K_1\cup K_2$-free. Now consider a $K_1\cup K_2$-free graph $G$. Since $G^c$ is $P_3$-free, $G^c$ is a disjoint union of complete graphs. So $G$ is a complete multipartite graph. Hence we may conclude that a graph $G$ is $K_1\cup K_2$-free if and only if $G$ is a complete multipartite graph.

\begin{manualtheorem}{6.2.1}\label{2k1uk2}
Let $G$ be a connected $(P_5,2K_1\cup K_2)$-free graph. Then $cop(G)\leq 2$.
\end{manualtheorem}
\begin{proof}
For the sake of contradiction, assume that there exists a counterexample of the theorem. Let $G$ be a minimum counterexample of the theorem, that is $G$ is a connected $(P_5,2K_1\cup K_2)$-free graph with minimum number of vertices such that $cop(G)>2$. Let $u$ be a vertex of $G$ and $G'$ be the graph $G-N[u]$. Since $G$ is $2K_1\cup K_2$-free, $G'$ is a $K_1\cup K_2$-free graph. Hence $G'$ is a complete multipartite graph, say with $k$~parts. We first show that $k\geq 2$. If possible, then let $k\leq 1$. If $k=0$, then $\{u\}$ is a dominating set of $G$ implying that $cop(G)=1$, a contradiction. So $k=1$. Then $G'$ consists of only isolated vertices. We show that two cops can capture the robber to obtain a contradiction to the fact that $cop(G)>2$. We place both the cops at $u$. The robber must choose a vertex of $G'$ to avoid immediate capture. While one cop stays at $u$, the robber cannot move since $G'$ consists of only isolated vertices. Then the second cop can go and capture the robber in the graph $G'$. Hence we have $k\geq 2$. Now if there exists a partite set $S$ of $G'$ such that $|S|=1$, then $\{u\}\cup S$ is a dominating set of $G$ implying that $cop(G)\leq 2$, a contradiction. So we may assume that every partite set of $G'$ has cardinality at least $2$.

\begin{claim} \label{2k1uk2-cl1}
	If $v$ is a neighbor of $u$, then $v$ has at most one non-neighbor in $V(G')$.
\end{claim}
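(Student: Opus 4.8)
The plan is to argue by contradiction: suppose $v$ is a neighbor of $u$ that has at least two non-neighbors in $V(G')$, and produce either an induced $2K_1\cup K_2$ or an induced $P_5$, contradicting the hypotheses on $G$. The two structural facts I would lean on are that $u$ is non-adjacent to every vertex of $V(G')$ (since $G'=G-N[u]$) and that $G'$ is complete multipartite with every part of size at least $2$, both already established above. Thus for any two vertices of $G'$ I know their adjacency purely from whether they share a part.

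The first step is a same-part analysis. Suppose $z_1,z_2$ are two non-neighbors of $v$ lying in the \emph{same} part of $G'$. Then $z_1z_2\notin E(G)$, while $uv\in E(G)$, and both $z_1,z_2$ are non-adjacent to $u$ (being in $G'$) and to $v$ (by assumption). Hence $\{u,v,z_1,z_2\}$ induces $2K_1\cup K_2$, a contradiction. I conclude that each part of $G'$ contains at most one non-neighbor of $v$, so if $v$ has two non-neighbors they must lie in distinct parts $P_a$ and $P_b$; write $z_1\in P_a$ and $z_2\in P_b$, and note $z_1z_2\in E(G)$.

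The key remaining step uses the size bound on the parts to manufacture a $P_5$. Since $|P_a|\ge 2$ and $P_a$ holds at most one non-neighbor of $v$ (namely $z_1$), there is a vertex $z_1'\in P_a\setminus\{z_1\}$ that is \emph{adjacent} to $v$. This $z_1'$ is non-adjacent to $u$ (it lies in $G'$), non-adjacent to $z_1$ (same part), and adjacent to $z_2$ (different parts). I would then check directly that $u\text{--}v\text{--}z_1'\text{--}z_2\text{--}z_1$ has no chords: $u$ meets only $v$; $v$ meets only $u$ and $z_1'$ among these five; $z_1'$ meets only $v$ and $z_2$; and $z_1$ meets only $z_2$. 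This yields an induced $P_5$, the desired contradiction, so $v$ can have at most one non-neighbor in $V(G')$.

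The only delicate point is the adjacency bookkeeping in the final step, and the one genuine ingredient that makes it work is that every part of $G'$ has at least two vertices, which is exactly what guarantees the auxiliary vertex $z_1'$ adjacent to $v$ exists. I do not expect a real obstacle beyond organizing the two-case split cleanly; the complete multipartite structure makes every adjacency question immediate, so no further lemmas should be needed.
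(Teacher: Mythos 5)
Your proof is correct and follows essentially the same route as the paper: both arguments rule out two non-neighbors in the same part via an induced $2K_1\cup K_2$ on $\{u,v,z_1,z_2\}$, then use the fact that every part has size at least $2$ to find an auxiliary vertex $z_1'$ adjacent to $v$, and exhibit the induced path $u$--$v$--$z_1'$--$z_2$--$z_1$ as a forbidden $P_5$. The adjacency checks in your final step all go through, so there is nothing to fix.
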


\begin{proof}[Proof of Claim~\ref{2k1uk2-cl1}]
 Let $v$ be a neighbor of $u$. For the sake of contradiction, assume that $v$ has two non-neighbors in $V(G')$, say $x$ and $y$. Since $\{u,v,x,y\}$ does not induce a $2K_1\cup K_2$ in $G$, $x$ and $y$ are in different partite sets of the complete multipartite graph $G'$. Recall that every partite set of $G'$ has cardinality at least $2$. Let $x'$ be a vertex of $G'$ other than $x$ such that $x$ and $x'$ are in the same partite set of $G'$. Note that $v$ is adjacent to $x'$; otherwise $\{u,v,x,x'\}$ induces a $2K_1\cup K_2$ in $G$. Since $y$ and $x'$ are in different partite sets of the complete multipartite graph $G'$, $y$ is adjacent to $x'$. Similarly, $y$ is adjacent to $x$. Then $\{u,v,x',y,x\}$ induces a $P_5$ in $G$, a contradiction. So $v$ has at most one non-neighbor in $V(G')$. 
\end{proof}

\begin{claim}\label{2k1uk2-cl2}
If $z$ is a vertex of $G'$, then $z$ has a non-neighbor in $N(u)$.
\end{claim}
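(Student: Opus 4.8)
The plan is to argue by contradiction using the retraction machinery of Lemma~\ref{retractlem}, mirroring the single-vertex retractions already exploited in the proof of Theorem~\ref{p5diam}. Suppose, for contradiction, that some vertex $z\in V(G')$ is adjacent to every vertex of $N(u)$; equivalently $N(u)\subseteq N(z)$. Since $z\in V(G')=V(G)\setminus N[u]$, we have $z\neq u$ and $z\notin N(u)$, so $z$ is a genuine vertex of $G-\{u\}$. First I would define the map $\phi\colon V(G)\rightarrow V(G)\setminus\{u\}$ by $\phi(u)=z$ and $\phi(w)=w$ for every $w\neq u$.

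The key step is to check that $\phi$ is a retraction from $G$ to $H:=G-\{u\}$. It is the identity on $V(H)$ by construction. For the homomorphism condition, any edge $xy$ with $x,y\neq u$ is preserved since $\phi$ fixes both endpoints; the only remaining edges are those of the form $uv$ with $v\in N(u)$, and for these $\phi(u)\phi(v)=zv$, which lies in $E(G)$ precisely because $N(u)\subseteq N(z)$. Hence $\phi$ is a valid retraction and $H=G-\{u\}$ is a retract of $G$.

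Having established the retraction, the hypotheses of Lemma~\ref{retractlem} are immediate: the graph $G-V(H)$ is the single vertex $u$, whose unique component $K_1$ is trivially complete, so the "complete components" requirement costs nothing here. Since $G$ is a minimum counterexample and $H=G-\{u\}$ is a smaller connected $(P_5,2K_1\cup K_2)$-free graph (both properties being hereditary, and connectivity inherited because $H$ is a retract of the connected graph $G$), minimality gives $cop(H)\leq 2$. Applying Lemma~\ref{retractlem} with this $H$ yields $cop(G)\leq 2$, contradicting $cop(G)>2$. Therefore no such $z$ exists, i.e.\ every $z\in V(G')$ has a non-neighbor in $N(u)$.

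I do not expect a serious obstacle in this argument; notably, the complete-multipartite structure of $G'$ and Claim~\ref{2k1uk2-cl1} are not needed for this particular claim. The only point requiring care is the verification that $\phi$ sends each edge $uv$ to an edge, which is exactly the content of the assumption $N(u)\subseteq N(z)$, together with the observation that the component condition in Lemma~\ref{retractlem} is satisfied vacuously because only the isolated vertex $u$ is deleted.
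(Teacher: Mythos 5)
Your proof is correct and follows essentially the same route as the paper: the same retraction $\phi$ sending $u$ to $z$ (valid exactly because $N(u)\subseteq N(z)$), minimality of the counterexample applied to $H=G-\{u\}$, and Lemma~\ref{retractlem} with the trivially complete component $\{u\}$. The paper simply states these steps more tersely, leaving the verification of the retraction and hypothesis checks implicit.
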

\begin{proof}[Proof of Claim~\ref{2k1uk2-cl2}]
Let $z$ be a vertex of $G'$. For the sake of contradiction, assume that $z$ is adjacent to every vertex of $N(u)$, that is $N(u)\subseteq N(z)$. Define a mapping $\phi: V(G)\rightarrow V(G) \setminus \{u\}$ that maps $u$ to $z$ and maps every vertex of $G-\{u\}$ to itself. Note that $\phi$ is a retraction from $G$ to $G-\{u\}$. Moreover, since $G$ is minimum counterexample, $cop(G-\{u\})\leq 2$. By taking $H=G-\{u\}$ in Lemma~\ref{retractlem}, we have $cop(G)\leq 2$, a contradiction. So $z$ has non-neighbor in $N(u)$.
\end{proof}

Now we show that two cops can capture the robber in $G$. Let $v$ be a neighbor of $u$. In the first turn, we place Cop~$1$ at $u$ and Cop~$2$ at $v$. To avoid immediate capture, the robber must choose a vertex $x$ that is not adjacent to $u$ and $v$.  Since $x\notin N[u]$, $x\in G'$. Again since $x$ is not adjacent to $v$, by Claim~\ref{2k1uk2-cl1}, $x$ is the only non-neighbor of $v$ in $V(G')$, that is $v$ is adjacent to every vertex of $V(G')\setminus \{x\}$. Since every partite set of $G'$ has cardinality at least $2$, there exists a vertex $x'$ other than $x$ such that $x$ and $x'$ are in the same parite set of $G'$.
By Claim~\ref{2k1uk2-cl2}, $x'$ has a non-neighbor $r$ in $N(u)$. Again by Claim~\ref{2k1uk2-cl1}, $r$ is adjacent to every vertex of $G'$ except $x'$. In particular, $r$ is adjacent to $x$.

In the next turn, Cop~$1$ moves to $r$ and Cop~$2$ moves to $u$. To avoid immediate capture, the robber must move to a vertex that is not adjacent to $r$ and $u$. Since $x'$ is the only vertex that is not adjacent to $r$ and $u$, the robber must move to $x'$. This is not possible since $x$ and $x'$ are in the same partite set of the multipartite graph $G'$. So the robber cannot escape from $x$ and gets captured implying that $cop(G)\leq 2$, a contradiction. So such a graph $G$ does not exist. This completes the proof of Theorem~\ref{2k1uk2}.
\end{proof}

	\section{Conclusion}
	
	In this paper, we obtained strategies using two cops to capture the robber in a $(P_5,H)$-free graph, where $H\in \{C_4$, $C_5$, claw, diamond, paw, $K_4$, $2K_1\cup K_2$, $K_3\cup K_1$, $P_3\cup P_1\}$. On the other hand, $P_4$-free graphs and $2K_2$-free graphs are already known to have the cop number at most~$2$. By including these results, we conclude that the cop number of $(P_5,\mathcal{H})$-free graphs is at most~$2$, where $\mathcal{H}$ is any graph on $4$~vertices with at least one edge. Moreover, $C_4$ and $C_5$ have the cop numbers~$2$ and at least one of these belongs to the class of $(P_5,\mathcal{H})$-free graphs. Thus the cop number of this class is $2$. Note that Conjecture~\ref{conj} remains open even for $t=5$. Even the question whether $2$~cops are sufficient to capture the robber in a $P_5$-free graph with independence number at most~$3$, remains open. Although we have focused on the subclasses of $P_5$-free graphs, the methods we have used may be applied to obtain the cop number of some other graph classes with forbidden induced subgraphs. 
	
%

	
%
	
\end{document}